\numberwithin{equation}{section}
\newtheoremstyle{personal}%
{12pt}%      Space above
{12pt}%      Space below
{\slshape}%         Body font
{}%         Indent amount
{\bfseries}% Theorem head font
{.}%        Punctuation after theorem head
{.5em}%     Space after theorem head
{}%         Theorem head spec (can be left empty, meaning "normal")
\theoremstyle{personal}%
\newtheorem{thm}{Theorem}[section]
\newtheorem{cor}[thm]{Corollary}
\newtheorem{lem}[thm]{Lemma}
\newtheorem{prop}[thm]{Proposition}
\theoremstyle{definition}
\newtheorem{rem}{Remark}[section]
\newcommand{\N}{\mathds{N}}
\newcommand{\E}{\mathds{E}}
\newcommand{\Z}{\mathds{Z}}
\newcommand{\R}{\mathds{R}}
\newcommand{\PP}{\mathds{P}}
\newcommand{\C}{\mathds{C}}
\newcommand{\K}{\mathds{K}}
\newcommand{\Q}{\mathds{Q}}
\newcommand{\diff}{\mathrm{d}}
\newcommand{\incl}{\mathrm{incl}}
\newcommand{\dist}{\mathrm{dist}}
\newcommand{\Tan}{\mathrm{T}}
\newcommand{\qq}{\bm{q}}
\newcommand{\pp}{\bm{p}}
\newcommand{\mul}{\mathrm{mult}}
\newcommand{\crit}{\mathrm{crit}}
\newcommand{\Hom}{\mathrm{H}}
\newcommand{\Loc}{\mathrm{C}}
\newcommand{\avind}{\overline{\ind}}
\newcommand{\ind}{\mathrm{ind}}
\newcommand{\nul}{\mathrm{nul}}
\definecolor{lightgray}{gray}{0.4}
\newcommand{\gr}{\textcolor{lightgray}}
\DeclareRobustCommand{\llonghookrightarrow}{\lhook\joinrel\relbar\joinrel\relbar\joinrel\rightarrow}
\DeclareRobustCommand{\llongrightarrow}{\relbar\joinrel\relbar\joinrel\rightarrow}
\DeclareMathOperator{\rank}{\mathrm{rank}}
\DeclareMathOperator{\ev}{\mathrm{ev}} 
\DeclareMathOperator{\supp}{\mathrm{supp}} 
\DeclareMathOperator*{\toup}{\longrightarrow} 
\DeclareMathOperator*{\ttoup}{\llongrightarrow} 
\DeclareMathOperator*{\eembup}{\llonghookrightarrow} 
\DeclareMathOperator*{\tttoup}{\xrightarrow{\hspace*{25pt}}} 
\begin{document}

\title[Closed geodesics with local homology in maximal degree]{Closed geodesics with local homology\\ in maximal degree on non-compact manifolds}
\author{Luca Asselle}
\address{Luca Asselle\newline\indent Ruhr Universit\"at Bochum, Fakult\"at f\"ur Mathematik\newline\indent Geb\"aude NA 4/33, D-44801 Bochum, Germany}
\email{luca.asselle@ruhr-uni-bochum.de}
\author{Marco Mazzucchelli}
\address{Marco Mazzucchelli\newline\indent CNRS, \'Ecole Normale Sup\'erieure de Lyon, UMPA\newline\indent  69364 Lyon Cedex 07, France}
\email{marco.mazzucchelli@ens-lyon.fr}
\date{July 2, 2017. \emph{Revised}: November 5, 2017}
\subjclass[2000]{53C22, 58E10}
\keywords{closed geodesics, Morse theory, free loop space, local homology}

\begin{abstract}
We show that, on a complete and possibly non-compact  Riemannian manifold of dimension at least 2 without close conjugate points at infinity, the existence of a closed geodesic with local homology in maximal degree (i.e.\ in degree index plus nullity) and maximal index growth under iteration forces the existence of infinitely many closed geodesics. For closed manifolds, this was a theorem due to Hingston.
\tableofcontents
\end{abstract}
\maketitle

\vspace{-20pt}

\section{Introduction}

Since the end of the 19th century, with the work of Hadamard \cite{Hadamard:1898ww} and Poincar\'e \cite{Poincare:1905jp, Poincare:1987yj}, the study of closed geodesics in Riemannian or Finsler manifolds has occupied a central role in geometry and dynamics, and has motivated the developments of sophisticated tools in non-linear analysis (Morse theory, to mention a particularly successful one) and symplectic geometry. The intrigue in the theory comes from the fact that closed geodesics, although are often abundant in closed manifolds, are nevertheless hard to find. While the existence of one closed geodesic \cite{Birkhoff:1966xb, Lyusternik:1951jt} is a consequence of a simple (for nowadays standards) minimax trick, the existence of a second closed geodesic on a simply connected closed Riemannian manifold is a difficult problem that has been settled only in certain cases, see \cite{Bangert:2010ak, Long:2009zm} and references therein. The celebrated closed geodesics conjecture claims that every closed Riemannian manifold of dimension at least 2 possesses infinitely many closed geodesics. Currently, the conjecture is known for large classes of closed manifolds \cite{Gromoll:1969gh, Bangert:1984ni, Bangert:1993wo, Franks:1992jt}, but is still widely open for instance for higher dimensional spheres, complex projective spaces, and general compact rank-one symmetric spaces.  Evidence supporting the closed geodesics conjecture is given by a theorem due to Rademacher \cite{Rademacher:1989te, Rademacher:1994ju}, which confirms the conjecture for a $C^2$-generic Riemannian metric on any given simply connected closed Riemannian manifold of dimension at least 2.

On general complete, but non-compact, Riemannian manifolds there may be only finitely many closed geodesics (e.g.\ in cylinders with constant negative curvature), and sometimes even no closed geodesics at all (e.g.\ in flat Euclidean spaces). From the variational perspective, the ends of the manifold cause a lack of compactness for the sublevel sets of the energy function, and therefore the standard techniques from non-linear analysis fail to provide critical points. In 1992, Benci and Giannoni  \cite{Benci:1992lq} introduced a penalization method and employed it to show the existence of a closed geodesic on every complete Riemannian manifold with a certain minimum of loop space homology, and under a suitable condition on the geometry at infinity: the Riemannian metric must have non-positive curvature at infinity. In the recent paper \cite{Asselle:2017pd}, the authors improved Benci and Giannoni's result as follows: every $d$-dimensional complete Riemannian manifold \emph{without close conjugate points at infinity} has at least a closed geodesic provided its free loop space homology is nontrivial in some degree larger than $d$, and infinitely many closed geodesics provided the set of Betti numbers of its free loop space homology in degrees larger than $d$ is unbounded. The condition of not having close conjugate points at infinity means that, for every length $\ell>0$, there exists a compact subset of the manifold outside which no pairs of points are conjugate along geodesics of length less than or equal to $\ell$.

On closed Riemannian manifolds of dimension at least 2, Hingston \cite{Hingston:1993ou} showed that the presence of a particular kind of closed geodesic -- one with local homology in maximal degree and maximal index growth under iteration -- forces the existence of infinitely many more closed geodesics. This remarkable and hard theorem is one of the crucial ingredients for proving that the growth rate of closed geodesics on any Riemannian 2-sphere is at least the one of prime numbers. 
Our main result extends the validity of Hingston's theorem to non-compact complete Riemannian manifolds without close conjugate points at infinity. 

\begin{thm}\label{t:main}
Let $(M,g)$ be a complete Riemannian manifold of dimension at least 2 without close conjugate points at infinity and possessing a non-iterated closed geodesic $\gamma$ such that:
\begin{itemize}
\item[(i)] the local homology $\Loc_d(E,\gamma)$ is non-trivial in degree $d=\avind(\gamma)+\dim(M)-1$;
\item[(ii)] $\ind(\gamma^m)+\nul(\gamma^m)=m\,\avind(\gamma)+\dim(M)-1$ for all $m\in\PP\cup\{1\}$, where $\PP\subset\N$ is some infinite subset of prime numbers.
\end{itemize}
Then $(M,g)$ contains infinitely many closed geodesics.
\end{thm}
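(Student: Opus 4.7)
The plan is to extend Hingston's Morse-theoretic argument \cite{Hingston:1993ou} from the closed to the non-compact setting by plugging in the penalization scheme developed by the authors in \cite{Asselle:2017pd}. Arguing by contradiction, suppose that $(M,g)$ admits only finitely many prime closed geodesics $\gamma_1,\dots,\gamma_k$, so that every critical point of the energy $E$ on the $\W$ free loop space $\Lambda M$ is an iterate $\gamma_i^m$. Following \cite{Asselle:2017pd}, I would exhaust $M$ by relatively compact open sets $U_1\Subset U_2\Subset\cdots$ and work with a family of penalized energies $E_n$ which coincide with $E$ on loops contained in $U_n$, have relatively compact sublevel sets, and satisfy the Palais-Smale condition. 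The hypothesis of no close conjugate points at infinity ensures that, for any $\ell>0$, every closed geodesic of length at most $\ell$ lies in a fixed compact $K_\ell\subset M$; hence, for $n$ large enough, the critical points of $E_n$ of energy at most $c$ are exactly the iterates $\gamma_i^m$ of energy at most $c$.

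Next I would carry out the local homology computation. Combining hypotheses (i) and (ii) with Bott's iteration theory and the Gromoll--Meyer product theorem, $\Loc_*(E,\gamma^m)$ is concentrated in the maximal degree $q_m:=m\,\avind(\gamma)+\dim(M)-1$ for every $m\in\PP\cup\{1\}$. Hingston's key technical contribution then produces, from a top-degree local class at $\gamma^m$, a relative homology class of degree $q_m+1$ in the pair of $S^1$-quotients $(\Lambda^{\le E(\gamma^m)+\epsilon}M/S^1,\Lambda^{\le E(\gamma^m)-\epsilon}M/S^1)$ whose boundary is the generator of the local homology. Running a min-max on $E_n$ against this family, with $n$ chosen large enough that $U_n$ supports it, would give a critical value $c_m>E(\gamma^m)$ with a controlled upper bound, realized by a genuine closed geodesic of $(M,g)$. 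Under the finiteness assumption the sequence $(c_m)_{m\in\PP}$ would have to lie in the discrete set $\{n^2 E(\gamma_i):1\le i\le k,\,n\in\N\}$; Hingston's sharp estimate on $c_m$ combined with a pigeonhole argument on which iterate energies can fit in the resulting intervals then produces the contradiction, just as in the closed case.

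The main obstacle is analytical and reflects exactly the gap between the closed and the non-compact setting. One must verify that the min-max family of cycles underlying $c_m$ can be represented inside the penalization domain $U_n$, and that Palais-Smale sequences for $E_n$ at level $c_m$ do not leak to infinity. The compactness analysis of \cite{Asselle:2017pd} handles a single closed geodesic, but Hingston's scheme requires running the argument for each $m\in\PP$ and controlling the outcome uniformly in $m$. This is achieved by invoking the no close conjugate points at infinity hypothesis at the length scale $\ell\sim m\cdot\mathrm{length}(\gamma)$ to confine all closed geodesics of length at most $\ell$ inside $K_\ell$, then choosing $U_n\supset K_\ell$ accordingly; the resulting min-max critical points then lie in $M$ as required, closing the argument.
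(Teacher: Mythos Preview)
Your outline captures the broad shape of the argument for the case $\avind(\gamma)>0$, but it misses the mechanism by which the penalization actually interacts with the Morse theory, and because of this it overlooks that the scheme fails outright when $\avind(\gamma)=0$.

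The condition ``no close conjugate points at infinity'' does not confine closed geodesics to compact sets, as you assert. What it does (see Section~\ref{s:Benci_Giannoni}) is guarantee that the spurious critical points of the penalized functional $F_\alpha$---the geodesic loops based in $\supp(f_\alpha)$---satisfy $\ind(F_\alpha,\gamma)+\nul(F_\alpha,\gamma)\le\dim(M)$. After a $C^2$-small perturbation making these non-degenerate with Morse index at most $\dim(M)$, the Morse inequality
\[
\rank \Hom_d(\{F_\alpha<b\},\{F_\alpha\le a\})\le\sum_{S^1\cdot\zeta\subset\mathcal G(a,b)}\rank\Loc_d(E,S^1\cdot\zeta)
\]
holds \emph{only in degrees $d>\dim(M)$}. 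Your min-max produces a class in degree $q_m+1$; when $\avind(\gamma)>0$ one has $q_m+1>\dim(M)$ for $m$ large, and the argument goes through. But when $\avind(\gamma)=0$ one has $q_m+1=\dim(M)$ for every $m$, and the Morse inequality above is unavailable: the class you detect could a priori come from a spurious penalization critical point rather than a genuine closed geodesic. This is precisely Remark~\ref{r:avind_zero}.

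The paper therefore splits into two cases. For $\avind(\gamma)>0$ it follows Hingston's construction, building an explicit cycle deformation (Lemma~\ref{l:homotopy_h_ms}) that pushes the local generator below level $m^2c$ while staying below $m^2c+m\epsilon$, and then applies the penalized Morse inequality in degree $d_m+1>\dim(M)$. For $\avind(\gamma)=0$ it abandons Hingston's deformation and instead invokes Bangert--Klingenberg's homological vanishing under iteration \cite{Bangert:1983ax}, which kills a class in $\Loc_{d+1}(E,S^1\cdot\gamma^m)$ and thereby forces nontrivial homology in degree $d+2=\dim(M)+1>\dim(M)$, where the penalized Morse inequality is again valid. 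Your proposal treats both cases uniformly with Hingston's scheme and so has a genuine gap in the zero-average-index case.
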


We refer the reader to Sections~\ref{s:Benci_Giannoni} and~\ref{s:local_homology} for the background on the Morse index and the local homology of closed geodesics.
Hingston showed that the above conditions~(i-ii) imply that each iterate $\gamma^m$, for $m\in\PP\cup\{1\}$, has non-trivial local homology in maximal degree $\ind(\gamma^m)+\nul(\gamma^m)$, and trivial local homology in all other degrees. This will be explained with full details also in Section~\ref{ss:construction}. One can also show that that the closed geodesic $\gamma$ must be degenerate, and indeed must have only 1 as Floquet multiplier.

An earlier result of Bangert and Klingenberg \cite[Theorem~3]{Bangert:1983ax}, along the line of Hingston's one, stated that a closed Riemannian manifold $(M,g)$ has infinitely many closed geodesics every time it has one, say $\gamma$, with average index $\avind(\gamma)=0$, non-trivial local homology, and that is not a global minimizer of the energy function in its connected component. This theorem has an intersection with Hingston's one: the two results cover the case in which the special closed geodesic $\gamma$ has average index $\avind(\gamma)=0$ and non-trivial local homology $\Loc_d(E,\gamma)$ in degree $d=\dim(M)-1\geq1$. Remarkably, our argument for proving Theorem~\ref{t:main} does not allow to extend the remaining cases in Bangert and Klingenberg's full theorem (that is, when $d\in\{0,...,\dim(M)-2\}$) to general non-compact manifolds without close conjugate points at infinity.

In the context of Hamiltonian diffeomorphisms of certain closed symplectic manifolds, the analog of the closed geodesic $\gamma$ of Theorem~\ref{t:main} in the special case where $\avind(\gamma)=0$ are the so called symplectically degenerate maxima: these are periodic points with zero average Maslov index and non-trivial local homology in maximal degree. A result first proved by Hingston \cite{Hingston:2009hp} for standard symplectic tori, and further extended by Ginzburg to closed aspherical symplectic manifolds \cite{Ginzburg:2010en}, implies that the existence of a symplectically degenerate maximum forces the existence of infinitely many other periodic points whose growth rate is at least the one of prime numbers. This theorem is a central ingredient in the proof of the Conley conjecture in symplectic geometry: the Hamiltonian diffeomorphisms of certain closed symplectic manifolds always have infinitely many periodic points, see \cite{Hingston:2009hp, Ginzburg:2010en, Hein:2012ml, Ginzburg:2012nx, Ginzburg:2010wh, Mazzucchelli:2013co} and references therein.

Certain theorems and conjectures in the theory of closed geodesics became infamous due to the remarkable number of mistakes in the published literature. Just to mention a couple of examples, the 1929 Lusternik and Schnirelmann's theorem \cite{Lusternik:1934km} on the existence of three simple closed geodesics on any Riemannian 2-sphere was universally accepted as correct only after the work of several authors in the late 1970s and 1980s (see e.g.\ \cite{Ballmann:1978rw, Grayson:1989gd}), and the above mentioned closed geodesics conjecture for closed Riemannian manifolds appeared as a theorem with an erroneous proof in Klingenberg's monograph \cite{Klingenberg:1978so}. In view of these accidents, in the current paper we decided to devote extra care and add full details in the proofs of several statements that would otherwise be considered known or folklore by the community. Along the way, we have taken the occasion to correct some minor technical mistakes that are recurrent in the closed geodesics literature.

\subsection{Organization of the paper} 
In Section~\ref{s:setting} we will recall the variational setting for the problem of closed geodesics on complete manifolds without close conjugate points at infinity. Section~\ref{s:local_homology} will be devoted to present and clarify known results about the local homology of closed geodesics; certain general properties of the local homology of abstract functions that we will make use of and that we could not find in the literature are collected in the appendix of the paper. In Section~\ref{s:idx_based} we will prove an iteration inequality for the Morse index of closed geodesics in the based loop space, which will be needed later on in the paper. In Section~\ref{s:avind_positive} we will prove Theorem~\ref{t:main} in the case where the given close geodesic has positive average index, whereas the case of vanishing average index will be treated in Section~\ref{s:avind_zero}.

\subsection{Acknowledgements}
The authors are grateful to an anonymous referee for her/his helpful remarks on the first draft of this paper.  Luca Asselle is partially supported by the DFG-grant AB 360/2-1 ``Periodic orbits of conservative systems below the Ma\~n\'e critical energy value''. Marco Mazzucchelli is partially supported by the ANR COSPIN (ANR-13-JS01-0008-01).

\section{The variational setting of closed geodesics}
\label{s:setting}

\subsection{The energy function}

Let $(M,g)$ be a Riemannian manifold. Its closed geodesics are the critical points with positive critical value of the smooth energy function
\begin{align*}
E:\Lambda M\to[0,\infty),\qquad
E(\gamma):=\int_0^1 g_{\gamma(t)}(\dot\gamma(t),\dot\gamma(t))\,\diff t,
\end{align*}
where $\Lambda M:=W^{1,2}(\R/\Z,M)$ is the Sobolev free loop space of $M$. We recall that the unit circle $S^1=\{e^{2\pi is}\in\C\ |\ s\in\R\}$ acts on the free loop space $\Lambda M$ by reparametrization:
\begin{align*}
e^{2\pi is}\cdot\gamma = \gamma(s+\cdot),\qquad
e^{2\pi is}\in S^1,\ \gamma\in\Lambda M.
\end{align*}
For each $\theta\in S^1$, the induced map $\theta:\Lambda M\to\Lambda M$, $\gamma\mapsto\theta\cdot\gamma$ is a diffeomorphism that leaves the energy function invariant, that is, $E(\theta\cdot\gamma)=E(\gamma)$. In particular, every closed geodesic $\gamma$ belongs to a critical circle $S^1\cdot\gamma$ of $E$.

If we equip the Hilbert manifold $\Lambda M$ with the usual Riemannian metric $G$ induced by $g$, the Hessian of the energy $E$ at a closed geodesic $\gamma$ is defined by an operator $H:\Tan_\gamma\Lambda M\to\Tan_\gamma\Lambda M$, i.e.
\begin{align*}
\diff^2E(\gamma)[\xi,\eta]
=
G_\gamma(H\xi,\eta),
\qquad
\forall \xi,\eta\in\Tan_\gamma\Lambda M.
\end{align*}
A computation shows that $H$ is a compact perturbation of the identity, see \cite[Lemma~3]{Gromoll:1969gh}. In particular, its negative eigenspace $\E_-$ and its kernel $\E_0$ are finite dimensional. Since $\gamma$ belongs to the critical circle $S^1\cdot\gamma$, the vector field $\dot\gamma\in\Tan_{\gamma}\Lambda M$ belongs to $\E_0$, and therefore $\dim(E_0)\geq1$. The Morse index and the nullity of $E$ at $S^1\cdot\gamma$ are defined by
\begin{align*}
\ind(\gamma) & =\ind(E,S^1\cdot\gamma)=\dim(\E_-),\\
\nul(\gamma) & =\nul(E,S^1\cdot\gamma)=\dim(\E_0)-1,
\end{align*}
or, equivalently,
\begin{align*}
\ind(\gamma) & := \max\big\{ \dim\E\ \big|\ \E\subset\Tan_\gamma\Lambda M\mbox{ with }\diff^2 E(\gamma)[\xi,\xi]<0\ \forall\xi\in\E\setminus\{0\}\big\},\\
\ind(\gamma)+\nul(\gamma) & := \max\big\{ \dim\E\ \big|\ \E\subset\Tan_\gamma\Lambda M\mbox{ with }\diff^2 E(\gamma)[\xi,\xi]\leq 0\ \forall\xi\in\E\big\}-1.
\end{align*}

If $\nul(\gamma)=0$, by Gromoll-Meyer's generalized Morse Lemma  \cite[Page~501]{Gromoll:1969gh}, these two indices completely determine the germ of the energy function $E$ at $S^1\cdot\gamma$ up to a diffeomorphism of the domain. When $\nul(\gamma)>0$, more insight on the critical circle $S^1\cdot\gamma$ is provided by its local homology
\begin{align*}
\Loc_*(E,\gamma)
:=\Hom_*(\{E<E(\gamma)\}\cup S^1\cdot\gamma\},\{E<E(\gamma)\}),
\end{align*}
where $\{E<E(\gamma)\}$ is an abbreviation for the sublevel set $\{\zeta\in\Lambda M\ |\ E(\zeta)<E(\gamma)\}$. Throughout this paper, $\Hom_*$ denotes the singular homology with rational coefficients.

Every closed geodesic $\gamma$ contributes countably many critical circles of the energy function (a ``tower'' of critical circles, in the classical terminology). Indeed, for every integer $m\in\N=\{1,2,3,...\}$, the $m$-th iterare $\gamma^m\in\Lambda M$ of the original $\gamma$, which is given by $\gamma^m(t)=\gamma(mt)$, belongs to a critical circle $S^1\cdot\gamma^m$ of $E$. In order to recognize when two critical circles belong to the same tower, it is useful to know the behavior of the functions $m\mapsto\ind(\gamma^m)$, $m\mapsto\nul(\gamma^m)$. A recipe for computing these functions was found by Bott \cite{Bott:1956sp} in the 1950s. Here, we simply recall that the first function grows linearly, and more precisely
\begin{gather}
\label{e:iteration_inequality_1}
m\,\avind(\gamma)-(\dim(M)-1)
\leq
\ind(\gamma^m),\\
\label{e:iteration_inequality_2}
\ind(\gamma^m)+\nul(\gamma^m)
\leq 
m\,\avind(\gamma)+\dim(M)-1,
\end{gather}
where $\avind(\gamma)\in(0,\infty)$ is the average index of $\gamma$, defined by
\begin{align*}
\avind(\gamma):=\lim_{m\to\infty} \frac{\ind(\gamma^m)}{m}.
\end{align*}
This shows in particular that condition~(ii) in Theorem~\ref{t:main} requires precisely that the function $m\mapsto\ind(\gamma^m)+\nul(\gamma^m)$ has maximal growth.

We refer the reader to \cite{Klingenberg:1978so, Klingenberg:1995fv, Long:2002ed} and references therein for more background on the variational setting of the closed geodesics problem.

\subsection{Benci-Giannoni's penalization method}
\label{s:Benci_Giannoni}

When $M$ is a closed manifold, $E$ is well-behaved and allows one to apply the techniques from abstract critical point theory, see e.g. \cite{Klingenberg:1978so} and references therein. On the contrary, when $M$ is complete but not closed, the sublevel sets of $E$ may not have the minimal compactness required by critical point theory. Following Benci and Giannoni \cite{Benci:1992lq}, a way to regain such a compactness is to add a suitable penalization to $E$ as follows. We choose any family of proper smooth functions $\{f_\alpha:M\to[0,\infty)\ |\ \alpha\in\N\}$ such that $f_0\geq f_1\geq f_2\geq ...$ pointwise, and for all compact sets $K\subset M$ there exists $\alpha_0=\alpha_0(K)\in\N$ such that 
$\supp(f_{\alpha_0})\cap K=\varnothing$. One can easily build such a family of functions by means of a partition of unity. The penalized energy functions
\begin{align*}
E_\alpha:\Lambda M\to\R,\qquad
E_\alpha(\gamma):=E(\gamma)+f_\alpha(\gamma(0))
\end{align*}
are smooth and satisfy the Palais-Smale condition, see~\cite[Lemma~4.5]{Benci:1992lq}. A loop $\gamma\in\Lambda M$ is a critical point of $E_\alpha$ if and only if $\gamma|_{(0,1)}$ is a geodesic and
\begin{align*}
 \dot\gamma(0^-)- \dot\gamma(0^+) = \mathrm{grad}f_\alpha(\gamma(0)),
\end{align*}
where the gradient in the right-hand side is the one induced by the Riemannian metric $g$. In other words, the critical points of $E_\alpha$ are the closed geodesics $\gamma$ whose base point $\gamma(0)$ lies outside the support of $f_\alpha$, and certain geodesic loops based at points inside the support of $f_\alpha$. Benci and Giannoni's clever idea was that these latter critical points can be suitably ``filtered out'' if one requires the complete Riemannian manifold $(M,g)$ to have non-positive curvature at infinity. Indeed, for each $b>0$ there exists $\alpha=\alpha(b)\in\N$ large enough such that each critical point $\gamma$ of $E_\alpha$ with $E_\alpha(\gamma)\leq b$ and $\gamma(0)\in\supp(f_\alpha)$ is entirely contained in the region of $M$ where the curvature is non-positive, and in particular its Morse index and nullity satisfy $\ind(E_\alpha,\gamma)+\nul(E_\alpha,\gamma)\leq\dim(M)$. This implies that all minmax procedures performed with relative cycles of degree $d>\dim(M)$ detect critical points $\gamma$ of $E_\alpha$, for $\alpha$ large enough, that are genuine closed geodesics; indeed, such $\gamma$'s satisfy $\ind(E_\alpha,\gamma)+\nul(E_\alpha,\gamma)\geq d>\dim(M)$, and  thus $\gamma(0)$ must lie outside the support of $f_\alpha$.

In \cite{Asselle:2017pd}, the authors realized that Benci and Giannoni's idea works under a weaker assumption on the complete Riemannian metric: it is enough to require $(M,g)$ to be without close conjugate points at infinity. The precise argument goes as follows. Since in the applications we will be looking for infinitely many closed geodesics of $(M,g)$, we can assume that they are all isolated (i.e.\ they belong to isolated critical circles of the energy function $E$) and contained in a relatively compact open subset $W\subset M$. For all $\alpha\in\N$ large enough, say $\alpha\geq\alpha_0(\overline{W})$, the support of $f_\alpha$ does not intersect the closure of $W$. We set 
\[\mathcal{U}_\alpha:=\big\{\gamma\in\Lambda M\ \big|\ \gamma(0)\in\supp(f_\alpha)\big\},\]
so that $\Lambda W\cap\mathcal{U}_\alpha=\varnothing$. Since the Riemannian manifold $(M,g)$ is without close conjugate points at infinity, for each $\ell>0$ there exists $\alpha\in\N$ large enough, say $\alpha\geq\alpha_1(\ell)\geq\alpha_0(\overline{W})$, such that no pair of points $q_0,q_1\in \supp(f_\alpha)$ is conjugated along a geodesic of length at most $2\ell$. In particular, all the critical points $\gamma\in\crit(E_\alpha)\cap\mathcal{U}_\alpha$ with $E(\gamma)\leq (2\ell)^2$ are geodesic loops such that no point $\gamma(t)$ with $t\in(0,1]$ is conjugate to $\gamma(0)$ along $\gamma|_{[0,t]}$; therefore, by~\cite[Lemma~2.1]{Asselle:2017pd}, the Morse indices of $E_\alpha$ at $\gamma$ satisfy
\begin{align*}
\ind(E_\alpha,\gamma)+\nul(E_\alpha,\gamma)\leq\dim(M).
\end{align*}
By a standard argument in Morse theory \cite{Marino:1975ii}, we can perturb in a $C^2$-small fashion the function $E_\alpha$ on an arbitrarily small neighborhood of $\crit(E_\alpha)\cap\mathcal{U}_\alpha$ and resolve all the degeneracies of the critical points in $\mathcal{U}_\alpha$. Thus, we obtain a smooth function $F_\alpha:\Lambda M\to\R$ that is arbitrarily $C^2$-close to $E_\alpha$, coincides with $E_\alpha$ outside $\mathcal{U}_\alpha$ (in particular, inside $\Lambda W$), and each critical point $\gamma\in\crit(F_\alpha)\setminus\Lambda W$ with critical value $E(\gamma)
\leq(2\ell)^2$ is non-degenerate and has Morse index $\ind(F_\alpha,\gamma)\leq\dim(M)$.

For all $0< a<b$, we set $\mathcal G(a,b)$ to be the collection of the critical circles of closed geodesics $\gamma$ of $(M,g)$ with $E(\gamma)\in(a,b)$, that is,
\begin{align*}
\mathcal G(a,b)
:=
\big\{
S^1\cdot\gamma
\ \big|\ 
\gamma\in\crit(E)\cap E^{-1}(a,b)
\big\}.
\end{align*}
For each $\alpha\geq\alpha_1(b^{1/2})$, we also define 
\begin{align*}
\mathcal G_\alpha(a,b)
:=\crit(F_\alpha)\cap F_\alpha^{-1}(a,b)\setminus \Lambda W,
\end{align*}
which is a (discrete) set of non-degenerate critical points of $F_\alpha$.
Notice that
\begin{align*}
\crit(F_\alpha)\cap F_\alpha^{-1}(a,b) = \mathcal G(a,b) \cup \mathcal G_\alpha(a,b).
\end{align*}
The set of non-degenerate critical points $\mathcal G_\alpha(a,b)$ is discrete. Each $\gamma\in\mathcal G_\alpha(a,b)$ carries local homology
\begin{align*}
\Loc_d(F_\alpha,\gamma)
=
\Hom_d(\{F_\alpha<F_\alpha(\gamma)\}\cup\{\gamma\},\{F_\alpha<F_\alpha(\gamma)\})
=
\left\{
  \begin{array}{@{}ll}
    \Q &   \mbox{if }\ind(E_\alpha,\gamma)=d, \\ 
    0 &   \mbox{otherwise}, 
  \end{array}
\right.
\end{align*}
and we have the Morse inequality
\begin{align*}
\rank \Hom_d(\{F_\alpha<b\},\{F_\alpha\leq a\})
\leq &
\sum_{S^1\cdot\gamma\subset\mathcal G(a,b)} \!\!\!\!\!\!\!\rank\Loc_d(E,S^1\cdot\gamma)\\
& +
\sum_{\gamma\in\mathcal G_\alpha(a,b)} \!\!\!\!\!\rank\Loc_d(F_\alpha,\gamma)\\
= &
\sum_{S^1\cdot\gamma\subset\mathcal G(a,b)} \!\!\!\!\!\!\!\rank\Loc_d(E,S^1\cdot\gamma)\\
& +
\#
\big\{
\gamma\in\mathcal G_\alpha(a,b)\ \big|\ \ind(F_\alpha,\gamma)= d
\big\}
\end{align*}
If we consider degrees $d>\dim(M)$, the latter summand vanishes, and therefore
\begin{equation*}
\begin{split}
\rank \Hom_d(\{F_\alpha<b\},\{F_\alpha\leq a\})
\leq 
\sum_{S^1\cdot\gamma\subset\mathcal G(a,b)} \!\!\!\!\!\!\!\rank\Loc_d(E,S^1\cdot\gamma),
\\
\forall d>\dim(M).
\end{split}
\end{equation*}

\section{The local homology of closed geodesics}
\label{s:local_homology}

Throughout this section, we follow Bangert and Long's~\cite[Section~3]{Bangert:2010ak} closely. We will provide full proofs of all the statements, since on the one hand some of these proofs are different from those given in~\cite[Section~3]{Bangert:2010ak}, and on the other hand  for later purposes we will need to be more explicit on the maps that realize certain homology isomorphisms. 

As before, we will  denote by $\Hom_*$ the singular homology with rational coefficients. We will also adopt the following notation: given two topological pairs $(X,A)\subset(Y,B)$, which means that $A\subseteq B\cap X$ and $B\subseteq Y$, we will denote the homology homomorphism induced by the inclusion as
\begin{align*}
\Hom_*(X,A)\ttoup^{\incl} \Hom_*(Y,B).
\end{align*}

\subsection{The space of closed broken geodesics}

Let $(M,g)$ be a Riemannian manifold. We denote by $\rho_{\mathrm{inj}}(q)$ the injectivity radius of $(M,g)$ at a point $q\in M$. We say that a geodesic $\zeta:[a,b]\to M$ is short when its length is strictly smaller than $\rho_{\mathrm{inj}}(\zeta(a))$. For each integer $k\geq2$, we introduce the space of closed broken geodesics
\begin{align*}
\Lambda_kM:=\big\{\gamma\in\Lambda M \ \big|\ \gamma|_{[i/k,(i+1)/k]} \mbox{ is a short geodesic } \forall i=0,...,k-1\big\},
\end{align*}
which is diffeomorphic to an open subset of the $k$-fold product $M\times...\times M$ via the evaluation map $\gamma\mapsto(\gamma(0),\gamma(\tfrac 1k),...,\gamma(\tfrac{k-1}{k}))$.

Let $E:\Lambda M\to\R$ be the energy function of the Riemannian manifold $(M,g)$, and $S^1\cdot\gamma\subset\crit(E)\cap E^{-1}(0,\infty)$ the critical circle of a closed geodesic $\gamma$. Let $k\in\N$ be large enough so that $S^1\cdot\gamma\subset\Lambda_kM$. For a sufficiently small $S^1$-invariant neighborhood $\mathcal U\subset\Lambda M$ of $S^1\cdot\gamma$, we can define a homotopy
\begin{align}\label{e:def_retraction}
r_s:\mathcal U\to\Lambda M,\qquad s\in[0,1],
\end{align}
as follows: for each $\zeta\in\mathcal U$, the closed curve $r_s(\zeta)$ coincides with $\zeta$ everywhere except on intervals of the form $[\tfrac{i}{k},\tfrac{i+s}{k}]$, for $i=0,...,k-1$, and the restrictions of  $r_s(\zeta)$ to such intervals are short geodesics. Notice that $r_0$ is the identity, and each map $r_s$, for $s\in(0,1]$, fixes $\mathcal U\cap\Lambda_kM$ and does not increase the energy, that is, $E\circ r_s\leq E$. Moreover, the image $U:=r_1(\mathcal U)$ is a  neighborhood of $S^1\cdot\gamma$ in $\Lambda_kM$. The time-1 map 
\begin{align}\label{e:retraction}
r:\mathcal U\to U
\end{align}
sends any $\zeta\in\mathcal{U}$ to the unique closed broken geodesic $\zeta':=r(\zeta)$ such that $\zeta'(\tfrac ik)=\zeta(\tfrac ik)$ for all $i=0,...,k-1$.

The following proposition was essentially stated without proof by Gromoll and Meyer \cite[Page~501]{Gromoll:1969gh} in the infinite dimensional setting of $\Lambda M$. The statement given here was provided by Bangert and Long\footnote{The proof of \cite[Lemma~3.5]{Bangert:2010ak} contains a minor inaccuracy: the curve $\Gamma_\gamma(s)$ introduced there is not differentiable at $s=1$, and therefore it cannot be employed in \cite[Equation~(3.14)]{Bangert:2010ak}.} \cite[Lemma~3.5]{Bangert:2010ak}.

\begin{prop}\label{p:isolated}
Let $\gamma\in\crit(E)\cap\Lambda_kM$ be a closed geodesic, and $\Sigma\subset\Lambda_kM$ a hypersurface intersecting the critical circle $S^1\cdot\gamma$ transversely at $\gamma$. If  $S^1\cdot\gamma$ is isolated in $\crit(E)$, then $\gamma$ is isolated in $\crit(E|_{\Sigma})$.
\end{prop}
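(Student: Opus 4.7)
My plan is to reduce the isolation of $\gamma$ in $\crit(E|_\Sigma)$ to the isolation of $S^1\cdot\gamma$ in $\crit(E|_{\Lambda_kM})$, using the retraction~\eqref{e:retraction} to turn the $S^1$-invariance of $E$ on $\Lambda M$ into first-order information on the finite-dimensional $\Lambda_kM$. The first step is to identify $\crit(E|_{\Lambda_kM})$ near $\gamma$ with $S^1\cdot\gamma$. A standard first-variation computation in the $M^k$-chart on $\Lambda_kM$ (by evaluation at the break points) shows that $\crit(E|_{\Lambda_kM})$ consists precisely of the smooth closed geodesics contained in $\Lambda_kM$, since the differential of $E|_{\Lambda_kM}$ at a broken geodesic $\zeta$ pairs test vectors with the velocity jumps $\dot\zeta(i/k^+)-\dot\zeta(i/k^-)$. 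Since $\gamma$ is smooth, the orbit $S^1\cdot\gamma$ lies in $\Lambda_kM$ near $\gamma$ as a smooth embedded arc tangent to $\dot\gamma\in T_\gamma\Lambda_kM$, and the isolation hypothesis yields an open neighborhood $U\subset\Lambda_kM$ of $\gamma$ with $\crit(E|_{\Lambda_kM})\cap U=S^1\cdot\gamma\cap U$; after shrinking $U$, transversality also gives $\Sigma\cap S^1\cdot\gamma\cap U=\{\gamma\}$. It therefore suffices to prove the inclusion $\crit(E|_\Sigma)\cap U\subset\crit(E|_{\Lambda_kM})\cap U$.

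To that end I consider the energy-non-increasing family $\Phi(\theta,\zeta):=r(\theta\cdot\zeta)\in\Lambda_kM$ for $(\theta,\zeta)$ in a small product neighborhood of $(0,\gamma)$. Since $\zeta\in\Lambda_kM$ is already a closed broken geodesic, $\Phi(0,\zeta)=r(\zeta)=\zeta$, and combining the non-increasing property of $r$ with $S^1$-invariance of $E$ gives
\begin{align*}
E(\Phi(\theta,\zeta))\leq E(\theta\cdot\zeta)=E(\zeta)=E(\Phi(0,\zeta)),
\end{align*}
so $\theta=0$ is a local maximum of $\theta\mapsto E(\Phi(\theta,\zeta))$. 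In the $M^k$-chart the coordinates of $\Phi(\theta,\zeta)$ are $(\zeta(i/k+\theta))_{i}$, which are merely piecewise smooth in $\theta$ but admit one-sided derivatives at $\theta=0$ producing smooth vector fields
\begin{align*}
Y_\zeta^{\pm}:=\bigl(\dot\zeta(i/k^{\pm})\bigr)_{i=0,\ldots,k-1}\in T_\zeta\Lambda_kM
\end{align*}
on a neighborhood of $\gamma$ in $\Lambda_kM$, with $Y_\gamma^+=Y_\gamma^-=\dot\gamma$ thanks to the smoothness of $\gamma$. The local maximum at $\theta=0$ then yields the key estimate
\begin{align*}
dE|_\zeta[Y_\zeta^+]\leq 0\leq dE|_\zeta[Y_\zeta^-].
\end{align*}

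Finally, let $\zeta\in\Sigma\cap U$ be a critical point of $E|_\Sigma$, so that $dE|_\zeta$ vanishes on $T_\zeta\Sigma$. Choose a unit normal $N_\zeta$ to $\Sigma$ in $T_\zeta\Lambda_kM$ and decompose $Y_\zeta^{\pm}=A_\zeta^{\pm}+B_\zeta^{\pm}N_\zeta$ with $A_\zeta^{\pm}\in T_\zeta\Sigma$. At $\gamma$, transversality makes $B_\gamma^+=B_\gamma^-$ equal to the nonzero normal component of $\dot\gamma$, and continuity forces $B_\zeta^+$ and $B_\zeta^-$ to remain nonzero and of the same sign after shrinking $U$. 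The identity $dE|_\zeta[Y_\zeta^{\pm}]=B_\zeta^{\pm}\,dE|_\zeta[N_\zeta]$ together with the two inequalities above then forces $dE|_\zeta[N_\zeta]=0$, hence $dE|_\zeta=0$. Thus $\zeta\in\crit(E|_{\Lambda_kM})\cap\Sigma\cap U=\{\gamma\}$, which is the desired conclusion. The main technical obstacle is ensuring that the one-sided derivatives $Y_\zeta^\pm$ really are smooth vector fields on $\Lambda_kM$, which relies on the finite-dimensional identification $\Lambda_kM\cong$ open subset of $M^k$ and on the smooth dependence of short geodesic segments on their endpoints; once this is in place, the combination of $S^1$-invariance, the retraction $r$, and transversality is clean.
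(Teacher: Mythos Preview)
Your argument is correct and follows essentially the same line as the paper's proof: both exploit the curve $s\mapsto r(e^{2\pi is}\cdot\zeta)$ in $\Lambda_kM$, its one-sided derivatives at $s=0$ (your $Y_\zeta^{\pm}$ are exactly the paper's $\dot Z(0^\pm)$), the fact that $E$ attains a local maximum along this curve at $s=0$, and the transversality of these one-sided tangents to $\Sigma$ near $\gamma$.

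The only organizational difference is that the paper argues contrapositively (assume $\zeta\neq\gamma$, produce a vector in $T_\zeta\Sigma$ on which $dE(\zeta)$ is nonzero) and handles separately the cases where one of the inequalities $dE(\zeta)\dot Z(0^+)\leq 0\leq dE(\zeta)\dot Z(0^-)$ is an equality versus both being strict. You instead assume $\zeta\in\crit(E|_\Sigma)$, decompose $Y_\zeta^\pm$ into tangential and normal parts, and use the sign coherence of the normal coefficients $B_\zeta^\pm$ to force $dE(\zeta)[N_\zeta]=0$ directly. This packaging is slightly cleaner and avoids the case split, but the mathematical content is identical.
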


\begin{proof}
Assume that $S^1\cdot\gamma$ is isolated in $\crit(E)$, and consider the open neighborhoods $\mathcal U\subset\Lambda M$ and $U=r_1(\mathcal U)\subset\Lambda_kM$  of $S^1\cdot\gamma$ introduced above. Let $\mathcal{V}\subset\mathcal{U}$ be a small enough neighborhood of $S^1\cdot\gamma$ so that 
\begin{align*}
\mathcal V\cap\crit(E)=S^1\cdot\gamma.
\end{align*}
The intersection $V:=U\cap\mathcal{V}$ is an open neighborhood of $S^1\cdot\gamma$ in $\Lambda_kM$. After shrinking $\Sigma$ around $\gamma$, we can assume that $\Sigma\subset V$ and $\Sigma\cap S^1\cdot\gamma=\{\gamma\}$. For each $\zeta\in\Sigma$, we define the curve 
\begin{align*}
Z:(-\tfrac1k,\tfrac1k)\to\Lambda_k M,
\qquad
Z(s)=r(e^{2\pi is}\cdot\zeta).
\end{align*}
Notice that $Z$ is a continuous curve in $\Lambda_kM$, and is smooth outside $0$. Indeed, for each $i\in\{0,...,k-1\}$, the curve $s\mapsto r(e^{2\pi is}\cdot\zeta)(\tfrac ik)$ is smooth outside $s=0$. Moreover, $Z$ is smooth at $0$ if and only if $\zeta$ is a smooth curve, and thus a closed geodesic. Hence $Z$ is smooth at $0$ if and only if $\zeta=\gamma$.

\begin{figure}
\begin{center}
\begin{footnotesize}
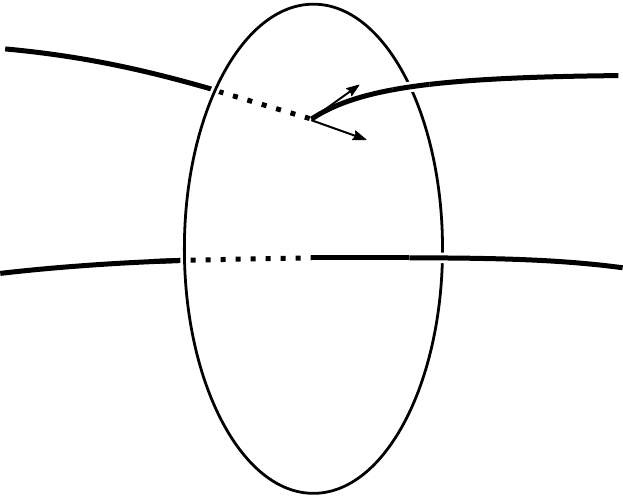 
\end{footnotesize}
\begin{small}
\caption{The hypersurface $\Sigma$ transverse to the critical circle $S^1\cdot\gamma$.}
\label{f:section}
\end{small}
\end{center}
\end{figure}

Assume now that $\zeta\neq\gamma$. If $\zeta$ is close to $\gamma$ in $\Sigma$, each restriction $\zeta|_{[i/k,(i+1)/k]}$ is $C^\infty$-close to $\gamma|_{[i/k,(i+1)/k]}$. This implies that the tangent vectors $\dot Z(0^+)$ and $\dot Z(0^-)$ are both transverse to $\Tan_\zeta\Sigma$ and point in the same direction, that is,
\begin{align*}
 \dot Z(0^-)=\lambda\dot Z(0^+)+\nu
\end{align*}
for some $\lambda>0$ and $\nu\in\Tan_\zeta\Sigma$, see Figure~\ref{f:section}. Moreover, the function $s\mapsto E(Z(s))$ has a strict maximum at $s=0$, and is smooth separately on both intervals $(-\tfrac1k,0]$ and $[0,\tfrac1k)$. Therefore
\begin{equation}\label{e:transverse_variations}
\begin{split}
\diff E(\zeta)\dot Z(0^+) = \tfrac{\diff}{\diff s}\big|_{s=0^+} E(Z(s))\leq 0,\\
\diff E(\zeta)\dot Z(0^-) = \tfrac{\diff}{\diff s}\big|_{s=0^-} E(Z(s))\geq 0. 
\end{split}
\end{equation}

Assume that at least one of these two inequalities is not strict, that is, there exists $\xi\in\{\dot Z(0^+),\dot Z(0^-)\}$ such that $\diff E(\zeta)\xi=0$. Since $\diff E(\zeta)|_{\Tan_\zeta\Lambda_kM}$ is non-zero, the kernel $\ker(\diff E(\zeta)|_{\Tan_\zeta\Lambda_kM})$ is a hyperplane in $\Tan_\zeta\Lambda_kM$ of codimension $1$. Since the tangent vector $\xi$ belongs to $\ker(\diff E(\zeta)|_{\Tan_\zeta\Lambda_kM})$ and is transverse to the hyperplane $\Tan_\zeta\Sigma$, we conclude that $\Tan_\zeta\Sigma$ is not contained in $\ker(\diff E(\zeta)|_{\Tan_\zeta\Lambda_kM})$, and thus $\zeta$ is not a critical point of $E|_\Sigma$.

Assume now that both inequalities in~\eqref{e:transverse_variations} are strict. This implies that
\begin{align*}
\diff E(\zeta)\nu
=
\diff E(\zeta)\dot Z(0^-) - \lambda\,\diff E(\zeta)\dot Z(0^+)>0.
\end{align*}
Therefore, the tangent vector $\nu\in\Tan_\zeta\Sigma$ does not belong to the kernel of $\diff E(\zeta)$, which again implies that $\zeta$ is not a critical point of $E|_\Sigma$.
\end{proof}

\subsection{Local homology}

Any smooth connected submanifold $K\subset \crit(E)$ is contained in some level set $E^{-1}(c)$. The local homology of $K$ is defined as the relative homology group
\begin{align*}
\Loc_*(E,K):=\Hom_*(\{E<c\}\cup K,\{E<c\}).
\end{align*}

\begin{lem}\label{l:local_homology_and_shift}
Let $\gamma\in\crit(E)\cap\Lambda_kM$ be a closed geodesic. 
\begin{itemize}
\item[(i)] For each hypersurface $\Sigma\subset \Lambda_kM$ intersecting the critical circle $S^1\cdot\gamma$ transversely at $\gamma$, the inclusion induces a local homology isomorphism
\begin{align*}
\Loc_*(E|_{\Sigma},\gamma)
\ttoup_{\cong}^{\incl}
\Loc_*(E,\gamma).
\end{align*}

\item[(ii)] For each embedded open interval $K\subset S^1\cdot\gamma$ containing $\gamma$, the inclusion induces a local homology isomorphism
\begin{align*}
\Loc_*(E,\gamma)
\ttoup_{\cong}^{\incl}
\Loc_*(E,K).
\end{align*}

\item[(iii)] For each $\theta=e^{2\pi i t}\in S^1$, the corresponding map on  $\Lambda M$ induces local homology isomorphisms, and for each embedded open interval $K\subset S^1\cdot\gamma$ containing  $e^{2\pi i t s}\cdot\gamma$ for all $s\in[0,1]$  we have a commutative diagram
\begin{align*}
\xymatrix{
\Loc_*(E,\gamma)
\ar[rr]_{\cong}^{\theta_*}
\ar[ddrr]_{\incl}^\cong
&&
\Loc_*(E,\theta\cdot\gamma)
\ar[dd]^{\incl}_\cong
\\\\
&&
\Loc_*(E,K)
} 
\end{align*}
\end{itemize}
\end{lem}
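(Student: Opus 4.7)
The strategy is to factor all three statements through a finite-dimensional picture on $\Lambda_kM$, where $E$ has a transparent product structure along the $S^1$-direction. The key tools are the Gromoll--Meyer retraction $r:\mathcal U\to U$ introduced earlier in Section~\ref{s:local_homology}, and a local slice construction built from the $S^1$-action.

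For (i), I would chain two inclusion-induced isomorphisms: $\Loc_*(E|_{\Sigma},\gamma)\cong\Loc_*(E|_{\Lambda_kM},\gamma)\cong\Loc_*(E,\gamma)$. The second isomorphism follows because $r_s$ fixes $S^1\cdot\gamma$ pointwise and satisfies $E\circ r_s\leq E$, so $r_s$ provides a strong deformation retraction of $(\{E<c\}\cap\mathcal U\cup\{\gamma\},\{E<c\}\cap\mathcal U)$ onto its intersection with $\Lambda_kM$. For the first isomorphism, after shrinking $\Sigma$ so that $\Sigma\cap S^1\cdot\gamma=\{\gamma\}$ (using Proposition~\ref{p:isolated}), the map $\phi(\zeta,s):=e^{2\pi is}\cdot\zeta$ is a local diffeomorphism $\Sigma\times(-\delta,\delta)\to\Lambda_kM$ onto a neighborhood of $\gamma$; since $E\circ\phi$ is independent of $s$, the linear homotopy $h_t(\zeta,s)=(\zeta,(1-t)s)$ preserves $\{E|_\Sigma<c\}\times(-\delta,\delta)$, fixes the point $(\gamma,0)$, and deformation-retracts the whole pair onto $(\{E|_\Sigma<c\}\cup\{\gamma\})\times\{0\}$.

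For (ii), the slice chart extends over $K$: choosing an interval $J$ that parametrizes $K$ via $s\mapsto e^{2\pi is}\cdot\gamma$ (whose length is smaller than the minimal period of $\gamma$, since $K$ is embedded in $S^1\cdot\gamma$), the map $\phi:\Sigma\times J\to\Lambda_kM$ remains a diffeomorphism onto a neighborhood of $K$ after possibly shrinking $\Sigma$. The pullback pair is $((\{E|_\Sigma<c\}\cup\{\gamma\})\times J,\{E|_\Sigma<c\}\times J)$, and the same linear retraction in the $s$-variable collapses $J$ to $\{0\}$, giving $\Loc_*(E|_{\Lambda_kM},K)\cong\Loc_*(E|_\Sigma,\gamma)$. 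Composing with (i) yields $\Loc_*(E,K)\cong\Loc_*(E,\gamma)$, with the isomorphism realized by inclusion.

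For (iii), since $\theta$ is a diffeomorphism of $\Lambda M$ preserving $E$, it sends $(\{E<c\}\cup\{\gamma\},\{E<c\})$ homeomorphically to $(\{E<c\}\cup\{\theta\cdot\gamma\},\{E<c\})$, so $\theta_*$ is an isomorphism. Commutativity of the diagram reduces to showing that the two resulting maps of pairs from $(\{E<c\}\cup\{\gamma\},\{E<c\})$ to $(\{E<c\}\cup K,\{E<c\})$ -- the diagonal inclusion, and $\theta$ followed by the inclusion of $\{\theta\cdot\gamma\}$ into $K$ -- are homotopic as maps of pairs. The homotopy $h_s(\zeta):=e^{2\pi its}\cdot\zeta$ does the job: $S^1$-invariance of $E$ sends $\{E<c\}$ into itself at every time $s\in[0,1]$, and the hypothesis that $\{e^{2\pi its}\cdot\gamma : s\in[0,1]\}\subset K$ keeps the orbit of $\gamma$ inside $K$. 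Homotopic maps of pairs induce the same map on homology, which gives commutativity. The main subtlety throughout is that $\Loc_*(E,\gamma)$ is defined by adjoining only the single point $\{\gamma\}$ to the sublevel set rather than the full critical orbit nearby; one must therefore rely crucially on $S^1$-invariance of $E$ to ensure that the slice retraction and the contracting homotopies preserve this single-point extension rather than requiring the full arc.
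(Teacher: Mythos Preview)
Your argument for part~(iii) is essentially the paper's own, and it is fine. The gap is in your treatment of~(i) and~(ii), where you claim that
\[
\phi(\zeta,s)=e^{2\pi is}\cdot\zeta
\]
defines a local diffeomorphism $\Sigma\times(-\delta,\delta)\to\Lambda_kM$ (and similarly $\Sigma\times J\to\Lambda_kM$). This is false: the $S^1$-action does \emph{not} preserve $\Lambda_kM$. A broken geodesic $\zeta\in\Lambda_kM$ has its corners at the times $i/k$; the shifted curve $e^{2\pi is}\cdot\zeta=\zeta(s+\cdot)$ has corners at $i/k-s$, and so lies in $\Lambda_kM$ only when $s\in\tfrac1k\Z$ or when $\zeta$ is already a smooth closed geodesic. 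In particular, for $\zeta\in\Sigma\setminus\{\gamma\}$ and small $s\neq0$ your map $\phi$ lands outside $\Lambda_kM$, and your ``$E\circ\phi$ independent of $s$'' retraction is not taking place inside the finite-dimensional model.

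This is exactly the difficulty the paper's proof is designed to handle. For~(i) the paper replaces $e^{2\pi is}\cdot\zeta$ by $\Psi(s,\zeta)=r(e^{2\pi is}\cdot\zeta)$, projecting back to $\Lambda_kM$ via the broken-geodesic retraction $r$; but this composite is only continuous (not smooth) at $s=0$, $E\circ\Psi$ is no longer constant in $s$, and one has to work on $[-\tfrac1k,0]$ and $[0,\tfrac1k]$ separately with an implicit-function argument to build the retraction onto $\Sigma$. For~(ii) the paper avoids the slice construction altogether and works directly in $\Lambda M$: extend the reparametrisation time $\tau$ from $\overline K$ to all of $\Lambda M$ by Tietze, and use the global homotopy $h_s(\zeta)=e^{2\pi is\tau(\zeta)}\cdot\zeta$, which preserves the pair $(\{E<c\}\cup K,\{E<c\})$ and collapses $K$ to $\{\gamma\}$. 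Your slice idea would work cleanly if the finite-dimensional model carried a genuine $S^1$-action, but it does not; this is the missing point.
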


\begin{proof}
It is well known that the inclusion induces a local homology isomorphism
\begin{align*}
\Loc_*(E|_{\Lambda_kM},\gamma)
\ttoup_{\cong}^{\incl}
\Loc_*(E,\gamma).
\end{align*}
This is a consequence of Proposition~\ref{p:homotopic_invariance_local_homology} applied to the triple of spaces 
\[\{\gamma\}\subset \{E|_{\Lambda_kM}<c\}\cup\{\gamma\} \subset \{E<c\}\cup\{\gamma\},\]
where $c:=E(\gamma)$, and to the homotopy $r_s:\mathcal U\to\Lambda_kM$ introduced in~\eqref{e:def_retraction}. Therefore, in order to prove point~(i) it is enough to show that the inclusion induces a local homology isomorphism
\begin{align*}
\Loc_*(E|_{\Sigma},\gamma)
\ttoup_{\cong}^{\incl}
\Loc_*(E|_{\Lambda_kM},\gamma).
\end{align*}

Let $r:\mathcal U\to U$ be the time-1 map of  Equation~\eqref{e:retraction}. The subset $V:=\mathcal U\cap U$ is an open subset of $S^1\cdot\gamma$ in $\Lambda_kM$.  We define the smooth maps 
\begin{align*}
\Psi_-:[-\tfrac1k,0]\times V\to\Lambda_kM,
\qquad
\Psi_+:[0,\tfrac1k]\times V\to\Lambda_kM,
\end{align*}
by $\Psi_\pm(s,\zeta)=r(e^{2\pi i s}\cdot\zeta)$. These maps glue together to form a continuous (but not smooth) map 
\[\Psi:[-\tfrac1k,\tfrac1k]\times V\to\Lambda_kM.\]
Since the circle action on $\Lambda M$ leaves the energy function invariant, and the map $r$ does not increase the energy, we have $E(\Psi(s,\zeta))\leq E(\zeta)$ with equality if and only if $s\in\{0,1/k,-1/k\}$ or $\zeta$ is a closed geodesic.

Let $X$ be any smooth vector field on $\Lambda_kM$ such that, on the critical circle $S^1\cdot\gamma$, is given by 
\begin{align*}
X(e^{2\pi i s}\cdot\gamma)=\dot\gamma(s+\cdot)=\tfrac{\diff}{\diff s} e^{2\pi is}\cdot\gamma,\qquad
\forall e^{2\pi i s}\in S^1.
\end{align*}
Let $\phi_t$ be the its flow. After shrinking $\Sigma$ around $\gamma$, we can assume that $\overline \Sigma$ is a compact hypersurface with boundary, and $X$ is transverse to $\overline\Sigma$.  Therefore, there exists $\epsilon\in(0,\tfrac1{2k})$ small enough so that the map
\begin{align*}
\Phi:(-\epsilon,\epsilon)\times\Sigma\to N,
\qquad
\Phi(t,\zeta)=\phi_t(\zeta)
\end{align*}
is a diffeomorphism onto a tubular neighborhood $N$ of $\Sigma$. For each $s\in(-\epsilon,\epsilon)$, we set 
\[\Sigma_s:=\Phi(\{s\}\times\Sigma)=\phi_s(\Sigma).\]
Notice that $\Sigma_0=\Sigma$, and more generally $\Sigma_s$ is a hypersurface intersecting the critical circle $S^1\cdot\gamma$ transversely at $s\cdot\gamma$. The hypersurface $\Sigma$ separates $N$ in two connected components. We denote by $N_+$ the connected component of $N\setminus\Sigma$ containing $e^{2\pi i s}\cdot\gamma$ for $s\in(0,\epsilon)$, and $N_-$ the other one. Notice that 
\begin{align*}
\Psi(r,e^{2\pi i s}\cdot\gamma)
=
\phi_r(e^{2\pi i s}\cdot\gamma)=e^{2\pi i (r+s)}\cdot\gamma,
\qquad
\forall r,r+s\in[-\epsilon,\epsilon].
\end{align*}
In particular, there exists a neighborhood $W\subset N\cap V$ of $\gamma$ such that
\begin{itemize}
\item for all $\zeta\in W\cap N_+$ and $r\in[0,\tfrac1k]$, if $\Psi_+(r,\zeta)\in\Sigma_s$ then $\tfrac{\partial}{\partial r} \Psi_+(r,\zeta)$ is transverse to $\Sigma_s$; analogously, for all $\zeta\in W\cap N_-$ and $r\in[-\tfrac1k,0]$, if $\Psi_-(r,\zeta)\in\Sigma_s$ then $\tfrac{\partial}{\partial r} \Psi_-(r,\zeta)$ is transverse to $\Sigma_s$;

\item for all $\zeta\in W\cap N_\pm$ there exists $\pm\delta\in(0,\epsilon)$ such that $\Psi(\mp\delta,\zeta)\in N_{\mp}$.
\end{itemize}

By the implicit function theorem, there exist smooth functions
\begin{align*}
\tau_+:W\cap N_+\to(-\epsilon,0),
\qquad
\tau_-:W\cap N_-\to(0,\epsilon)
\end{align*}
such that $\Psi_{\pm}(\tau_{\pm}(\zeta),\zeta)\in\Sigma$ for all $\zeta\in W\cap N_\pm$. Notice that $\tau_{\pm}(\zeta_n)\to 0$ as $\zeta_n$ converges to an element of $\Sigma$. Therefore, the functions $\tau_+$ and $\tau_-$ glue together to form a continuous function $\tau:W\to(-\epsilon,\epsilon)$ such that $\Psi(\tau(\zeta),\zeta)\in\Sigma$ for all $\zeta\in W$. We define the homotopy $h_t:W\to N$, for $t\in[0,1]$, by $h_t(\zeta):=\Psi(t\tau(\zeta),\zeta)$, so that $h_0=\mathrm{id}$, $h_1(W)\subset\Sigma$, and $h_t$ fixes $\Sigma\cap W$ for all $t\in[0,1]$. Moreover, $E\circ h_t\leq E$ for all $t\in[0,1]$, and therefore $h_t$ preserves the sublevel sets of $E$. This homotopy, together with Proposition~\ref{p:homotopic_invariance_local_homology}, implies point~(i).

Let $K\subset S^1\cdot\gamma$ be an embedded open interval containing $\gamma$. In particular, the closure of $K$ in $\Lambda M$ is homeomorphic to a compact interval. There is a unique smooth function $\tau:\overline K\to(-1,1)$ such that $\tau(\gamma)=0$ and, more generally, $e^{2\pi i\tau(\zeta)}\cdot\zeta =\gamma$ for all $\zeta\in K$. By Tietze's extension Theorem, we can extend $\tau$ to a continuous function $\tau:\Lambda M\to\R$. We define a continuous homotopy $h_s:\Lambda M\to\Lambda M$, for $s\in[0,1]$, by $h_s(\zeta)=e^{2\pi i s \tau(\zeta)}\cdot\zeta$. Let $c>0$ be the critical value of $K$, that is, $K\subset E^{-1}(c)$. Notice that $h_0$ is the identity, every $h_s$ preserves both the sublevel set $\{E<c\}$ and the union $\{E<c\}\cup K$, and $h_1$ maps $\{E<c\}\cup K$ to $\{E<c\}\cup \{\gamma\}$. In particular, $h_1$ is a homotopic inverse of the inclusion 
\begin{align*}
(\{E<c\}\cup\{\gamma\},\{E<c\}) \subset (\{E<c\}\cup K,\{E<c\}),
\end{align*}
and point (ii) follows.

The action by a fixed element $\theta=e^{2\pi i t}\in S^1$ gives a diffeomorphism of $\Lambda M$ that restricts to a homeomorphism of pairs
\begin{align}\label{e:action_by_theta}
(\{E<c\}\cup\{\gamma\},\{E<c\}) \toup^{\cong} (\{E<c\}\cup\{\theta\cdot\gamma\},\{E<c\}).
\end{align}
In particular, the action by $\theta$ induces an isomorphism in local homology as claimed in point~(iii). By composing the homeomorphism~\eqref{e:action_by_theta} with an inclusion we obtain a continuous map of pairs
\begin{align*}
\Theta:(\{E<c\}\cup\{\gamma\},\{E<c\}) \to (\{E<c\}\cup K,\{E<c\}),\qquad \Theta(\zeta)=\theta\cdot\zeta,
\end{align*}
which induces an isomorphism in homology according to point~(ii). We define a continuous homotopy $\Theta_s:\Lambda M\to\Lambda M$, $s\in[0,1]$, by $\Theta_s(\zeta)=e^{2\pi i t s}\cdot\zeta$. If $K$ contains $e^{2\pi i t s}\cdot\gamma$ for all $s\in[0,1]$, $\Theta_s$ restricts to a homotopy of pairs of the form
\begin{align*}
\Theta_s:(\{E<c\}\cup\{\gamma\},\{E<c\}) \to (\{E<c\}\cup K,\{E<c\}).
\end{align*}
This proves the commutativity of the diagram in point~(iii).
\end{proof}

In the proof of Theorem~\ref{t:main}, we will need the following statement concerning closed geodesics with particularly simple local homology. Its proof can also be extracted from the one of \cite[Proposition~1]{Hingston:1993ou}.

\begin{lem}\label{l:generator_local_homology}
Let $S^1\cdot\gamma\in\crit(E)\cap\Lambda_kM$ be the isolated critical circle of a closed geodesic $\gamma$ whose local homology $\Loc_d(E,\gamma)$ is non-trivial in degree $d=\ind(\gamma)+\nul(\gamma)$. Then, the local homology $\Loc_*(E,\gamma)$ is concentrated in degree $d$. A generator of $\Loc_d(E,\gamma)$ is given by any smoothly embedded $d$-dimensional compact ball $B^d\subset\Lambda_k M$ intersecting $S^1\cdot\gamma$ in its interior at $\gamma$ with a non-tangent intersection and such that $E|_{B^d\setminus\{\gamma\}}<E(\gamma)$.
\end{lem}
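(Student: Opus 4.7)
The plan is to reduce the computation to a finite-dimensional picture on a transverse section $\Sigma\subset\Lambda_kM$ via Lemma~\ref{l:local_homology_and_shift}(i), and then apply the Gromoll--Meyer generalized Morse Lemma. Choose a smooth hypersurface $\Sigma\subset\Lambda_kM$ through $\gamma$ transverse to $S^1\cdot\gamma$; by Proposition~\ref{p:isolated}, $\gamma$ is an isolated critical point of $E|_\Sigma$, with index $\ind(\gamma)$ and nullity $\nul(\gamma)$. The generalized Morse Lemma yields smooth coordinates $(x,y,z)$ near $\gamma$ on $\Sigma$, with $\gamma=(0,0,0)$, $x$ ranging in a null submanifold $N$ of dimension $\nul(\gamma)$, $y\in\R^{\ind(\gamma)}$, $z\in\R^{n'}$ where $n'=\dim\Sigma-d$, and
\begin{align*}
E|_\Sigma(x,y,z)-E(\gamma)=E_0(x)-|y|^2+|z|^2,
\end{align*}
where $E_0\colon N\to\R$ has an isolated critical point at $0$. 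Combined with Lemma~\ref{l:local_homology_and_shift}(i), the classical shifting theorem gives $\Loc_*(E,\gamma)\cong\Loc_{*-\ind(\gamma)}(E_0,0)$.

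From the hypothesis $\Loc_d(E,\gamma)\neq 0$ with $d=\ind(\gamma)+\nul(\gamma)$, we get $\Loc_{\nul(\gamma)}(E_0,0)\neq 0$. Since $\dim N=\nul(\gamma)$, a function with an isolated critical point has non-trivial local homology in the top degree only when that critical point is a strict local maximum; hence $0$ is a strict local maximum of $E_0$, and $\Loc_*(E_0,0)\cong\Q$ is concentrated in degree $\nul(\gamma)$. This proves that $\Loc_*(E,\gamma)$ is concentrated in degree $d$. Moreover, the deformation $r_t(x,y,z)=(x,y,(1-t)z)$ retracts $\{E|_\Sigma\leq E(\gamma)\}$ onto $\{z=0\}$ while preserving $\{E|_\Sigma<E(\gamma)\}$; using that $E_0\leq 0$ near $0$ with equality only at $0$, the resulting retracted pair is $(V,V\setminus\{0\})$ where $V$ is a neighborhood of $0$ in $N\times\R^{\ind(\gamma)}\cong\R^d$. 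Therefore $\Loc_d(E,\gamma)\cong H_d(V,V\setminus\{0\})\cong\Q$ is generated by the fundamental class of any small embedded $d$-ball in $V$.

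For the generator statement, given $B^d$ as in the lemma, I would exploit the transversality hypothesis $\dot\gamma\notin T_\gamma B^d$ to choose the hypersurface $\Sigma$ so that $B^d\subset\Sigma$ near $\gamma$: extend $T_\gamma B^d$ to a hyperplane in $T_\gamma\Lambda_kM$ missing $\dot\gamma$, and realize it as $T_\gamma\Sigma$ for a hypersurface through $\gamma$ containing $B^d$. After shrinking $B^d$ to lie inside the Morse coordinate chart---which does not change its homology class, by a collar argument using $E<E(\gamma)$ on $B^d\setminus\{\gamma\}$---consider the projection $\pi\colon(x,y,z)\mapsto(x,y)$ onto $V\subset N\times\R^{\ind(\gamma)}$. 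Two observations show that $\pi|_{B^d}\colon(B^d,\partial B^d)\to(V,V\setminus\{0\})$ has topological degree $\pm 1$: (a)~$\pi|_{B^d}^{-1}(0)=\{\gamma\}$, for any other preimage $p\in B^d\setminus\{\gamma\}$ with $\pi(p)=0$ would have coordinates $(0,0,z)$ and hence $E(p)-E(\gamma)=|z|^2>0$, contradicting $E|_{B^d\setminus\{\gamma\}}<E(\gamma)$; (b)~$d\pi|_{T_\gamma B^d}$ is an isomorphism, because a vector $v=(0,0,v_+)\in T_\gamma B^d$ with $v_+\neq 0$ would produce, along any smooth curve $\alpha\subset B^d$ tangent to $v$ at $\gamma$, the expansion $E(\alpha(t))-E(\gamma)=t^2|v_+|^2+O(t^3)>0$ for small $t\neq 0$, contradicting the same hypothesis.

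Via the retraction $r_1$, this topological degree equals the coefficient of $[B^d]$ with respect to a generator of $H_d(V,V\setminus\{0\})\cong\Loc_d(E,\gamma)$, so $[B^d]$ is a generator. The main obstacle I anticipate is exactly this last step: the preceding parts are routine applications of finite-dimensional Morse theory, whereas pinning down that the \emph{arbitrary} disk $B^d$ represents a generator---and not merely a non-zero class---requires both the ad~hoc construction of a section $\Sigma$ containing $B^d$ and the degree computation via $\pi$, supported by the second-order Taylor analysis of $E$ along curves in $B^d$.
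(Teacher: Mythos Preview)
Your proposal is correct and follows essentially the same route as the paper: reduce to a transverse hypersurface $\Sigma$ containing $B^d$ via Lemma~\ref{l:local_homology_and_shift}(i) and Proposition~\ref{p:isolated}, then invoke the finite-dimensional fact that an isolated critical point with non-trivial local homology in degree $\ind+\nul$ has local homology concentrated there and generated by any such ball. The only difference is that the paper outsources this last fact to references (Hingston \cite[p.~256]{Hingston:1993ou} and \cite[Prop.~2.6(ii)]{Mazzucchelli:2013co}), whereas you unpack it via the Gromoll--Meyer splitting, the strict-local-maximum characterization on the null manifold, and a local degree computation for the projection $\pi$; your version is thus more self-contained, and the degree argument you sketch (single transverse preimage over $0$, ruled by the energy inequality and the second-order expansion) is the standard way to prove that cited statement.
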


\begin{proof}
Let $\Sigma\subset\Lambda_kM$ be a hypersurface transverse to the critical circle $S^1\cdot\gamma$ and containing the ball $B^d$. By Proposition~\ref{p:isolated}, since $S^1\cdot\gamma$ is isolated in $\crit(E)$, $\gamma$ is an isolated critical point of the restricted energy $E|_{\Sigma}$. By Lemma~\ref{l:local_homology_and_shift}(i), the inclusion induces a local homology isomorphism
\begin{align*}
\Loc_*(E|_{\Sigma},\gamma)
\ttoup_{\cong}^{\incl}
\Loc_*(E,\gamma).
\end{align*}
Since the tangent space $\Tan_{\gamma}(S^1\cdot\gamma)$ is contained in the kernel of the Hessian of $E$ at $\gamma$, we have that $\ind(E,\gamma)=\ind(E|_{\Sigma},\gamma)$ and $\nul(E,\gamma)=\nul(E|_{\Sigma},\gamma)$. Therefore, the local homology $\Loc_d(E|_{\Sigma},\gamma)$ is non-trivial in degree $d=\ind(E|_{\Sigma},\gamma)+\nul(E|_{\Sigma},\gamma)$. By a general Morse theoretic statement (see \cite[page~256]{Hingston:1993ou} or \cite[Prop.~2.6(ii)]{Mazzucchelli:2013co}), the local homology $\Loc_*(E|_{\Sigma},\gamma)$ is concentrated in degree $d$, and is generated by $B^d$. Thus, the same is true for the local homology $\Loc_*(E,\gamma)$ as well.
\end{proof}

The circle action on $\Lambda M$ is not free. Indeed, given a loop $\zeta\in\Lambda M$ and an integer $m\in\N$, we denote by $\zeta^m\in\Lambda M$ the $m$-th iterate of $\zeta$, which is defined by $\zeta^m(t)=\zeta(mt)$. If a loop $\gamma\in\Lambda M$ is not a stationary curve, we denote by $\mul(\gamma)$ its multiplicity as an iterated curve, that is, the maximal $m\in\N$ such that $\gamma=\zeta^m$ for some $\zeta\in\Lambda M$. Let $\Z_m\subset S^1$ be the cyclic subgroup of order $m=\mul(\gamma)$ generated by $\mu=e^{2\pi i/m}$. The isotropy group of $\gamma$ is precisely $\Z_m$, meaning
\begin{align*}
\Z_m=\{ \theta\in S^1\ |\ \theta\cdot\gamma=\gamma \}.
\end{align*}

The following statement, which is a slightly more detailed version of the first part of \cite[Prop.~3.2]{Bangert:2010ak}, describes the relation between the local homology of a critical circle and the local homology of a critical point on it. A closed geodesic $\gamma$ is said to be non-iterated when $\mul(\gamma)=1$.

\begin{lem}
\label{l:exact_sequence}
Let $\gamma\in\crit(E)$ be a non-iterated closed geodesic. Consider an integer $m\in\N$ and the generator $\mu=e^{2\pi i/m}$ of the subgroup $\Z_m\subset S^1$. For each $m\in\N$ and $d\in\N\cup\{0\}$, we have
\begin{align*}
\Loc_d(E,S^1\cdot\gamma^m)
\cong
\Loc_d(E,\gamma^m)^{\Z_m} \oplus \Loc_{d-1}(E,\gamma^m)^{\Z_m}.
\end{align*}
Moreover, the  sequence 
\begin{align*}
0
\toup (\mu_*-\mathrm{id})\Loc_*(E,\gamma^m)
\ttoup^{\incl}
 \Loc_*(E,\gamma^m)
\ttoup^\incl \Loc_*(E,S^1\cdot\gamma^m)
\end{align*}
is exact.
\end{lem}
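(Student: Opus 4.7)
The plan is to derive both statements from a single Mayer-Vietoris computation on the critical orbit. Since $\gamma$ is non-iterated, the $S^1$-stabilizer of $\gamma^m$ is exactly $\Z_m=\langle\mu\rangle$, so the orbit $S^1\cdot\gamma^m$ is a topological circle. Set $c:=E(\gamma^m)$, $A:=\{E<c\}$, $X:=A\cup S^1\cdot\gamma^m$, and write $L_n:=\Loc_n(E,\gamma^m)$. I cover $S^1\cdot\gamma^m$ by two embedded open arcs $K_1,K_2$, each slightly longer than half of the orbit circle, whose intersection is the disjoint union $L_1\sqcup L_2$ of two short arcs, with $L_1\ni\gamma^m$ and $L_2\ni\eta:=e^{\pi i/m}\cdot\gamma^m$. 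I arrange moreover that the sub-arc of $K_1$ from $\gamma^m$ to $\eta$ is swept out by the rotation path $s\mapsto e^{\pi is/m}\cdot\gamma^m$ (short positive path, endpoint shift $\sigma:=e^{\pi i/m}$), while the sub-arc of $K_2$ from $\gamma^m$ to $\eta$ is swept out by $s\mapsto e^{-\pi is/m}\cdot\gamma^m$ (short negative path, endpoint shift $\sigma':=e^{-\pi i/m}$). The key identity
\[
\sigma'=\sigma\cdot\mu^{-1}
\]
then holds because $\mu\cdot\gamma^m=\gamma^m$. Since $S^1\cdot\gamma^m$ is compact in $\Lambda M$, each $U_i:=A\cup K_i$ is open in $X$, and relative Mayer-Vietoris applies to the pair $(X,A)$ with cover $\{U_1,U_2\}$.

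By Lemma~\ref{l:local_homology_and_shift}(ii)-(iii), inclusions and the shift $\sigma_*$ identify $H_n(U_i,A)\cong L_n$ (via inclusion from $\gamma^m\in K_i$) and $H_n(U_1\cap U_2,A)\cong L_n\oplus L_n$ (first factor via inclusion from $\gamma^m\in L_1$, second factor via $\sigma_*$ then inclusion from $\eta\in L_2$; the direct-sum decomposition follows from excision since $L_1,L_2$ are disjoint). To unwind the connecting map $\alpha_n$, I apply Lemma~\ref{l:local_homology_and_shift}(iii) with the short positive path in $K_1$ to get $\iota_{\eta,K_1*}\circ\sigma_*=\iota_{\gamma^m,K_1*}$, and with the short negative path in $K_2$ to get $\iota_{\eta,K_2*}\circ\sigma'_*=\iota_{\gamma^m,K_2*}$; combining the latter with $\sigma'=\sigma\mu^{-1}$ yields $\iota_{\eta,K_2*}\circ\sigma_*=\iota_{\gamma^m,K_2*}\circ\mu_*$. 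A direct computation then gives
\[
\alpha_n(a,b)=(a+b,\ -a-\mu_*b).
\]

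The remaining work is linear algebra over $\Q[\Z_m]$. A direct check yields $\ker\alpha_n=\{(-b,b):b\in L_n^{\Z_m}\}\cong L_n^{\Z_m}$; since $\Q[\Z_m]$ is semisimple, one has $L_n=L_n^{\Z_m}\oplus(\mu_*-\mathrm{id})L_n$, hence $\operatorname{coker}\alpha_n\cong L_n/(\mu_*-\mathrm{id})L_n\cong L_n^{\Z_m}$. The long exact Mayer-Vietoris sequence therefore breaks into short exact sequences
\[
0\to L_n^{\Z_m}\longrightarrow\Loc_n(E,S^1\cdot\gamma^m)\longrightarrow L_{n-1}^{\Z_m}\to 0,
\]
which split over $\Q$, giving the first claim. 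For the second claim, the inclusion $\Loc_n(E,\gamma^m)\to\Loc_n(E,S^1\cdot\gamma^m)$ factors through $H_n(U_1,A)$ as $a\mapsto\beta_n(a,0)$; the element $(a,0)$ lies in the image of $\alpha_n$ if and only if $a\in(\mu_*-\mathrm{id})L_n$, so the kernel of this inclusion is exactly $(\mu_*-\mathrm{id})\Loc_n(E,\gamma^m)$, establishing exactness of the three-term sequence.

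The main technical difficulty is the careful bookkeeping of the shift identifications inside $\alpha_n$: the two natural paths from $\gamma^m$ to $\eta$, one in each $K_i$, differ precisely by the stabilizer element $\mu$, and it is this discrepancy that produces the $\mu_*$ factor in the second component of $\alpha_n$ and ultimately forces the $\Z_m$-invariants to appear on both sides of the decomposition.
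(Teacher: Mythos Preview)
Your proof is correct and follows essentially the same approach as the paper's: a relative Mayer--Vietoris argument for the cover of $S^1\cdot\gamma^m$ by two open arcs, with Lemma~\ref{l:local_homology_and_shift}(ii)--(iii) supplying the identifications and the crucial $\mu_*$-twist coming from the discrepancy between the two arc paths joining $\gamma^m$ to the midpoint. The only cosmetic differences are that you pull the second intersection summand back to $\Loc_*(E,\gamma^m)$ via $\sigma_*$ from the outset (whereas the paper keeps it at $\Loc_*(E,\mu^{1/2}\cdot\gamma^m)$ and writes the connecting map with $(\mu^{\pm1/2})_*$), and that you invoke semisimplicity of $\Q[\Z_m]$ for the kernel/cokernel computation where the paper does an explicit dimension count; neither changes the substance.
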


\begin{proof}
For any $\epsilon\in(0,\tfrac 1{4m})$, we set
\begin{align*}
I_1 & :=\big\{ e^{2\pi i s}\ \big|\ s\in(-\epsilon,\tfrac1{2m}+\epsilon)  \big\},\\
I_2 & :=\big\{ e^{2\pi i s}\ \big|\ s\in(\tfrac1{2m}-\epsilon,\tfrac1m+\epsilon)  \big\},\\
J_1 & :=\big\{ e^{2\pi i s}\ \big|\ s\in(-\epsilon,\epsilon)  \big\},\\
J_2 & :=\big\{ e^{2\pi i s}\ \big|\ s\in(\tfrac1{2m}-\epsilon,\tfrac1{2m}+\epsilon)  \big\},
\end{align*}
so that 
\begin{align*}
 (I_1\cdot\gamma^m) \cup (I_2\cdot\gamma^m) & = S^1\cdot\gamma^m,\\
 (I_1\cdot\gamma^m) \cap (I_2\cdot\gamma^m) & =(J_1\cdot\gamma^m)\sqcup(J_2\cdot\gamma^m),
\end{align*}
see Figure~\ref{f:circle}. 
\begin{figure}
\begin{center}
\begin{footnotesize}
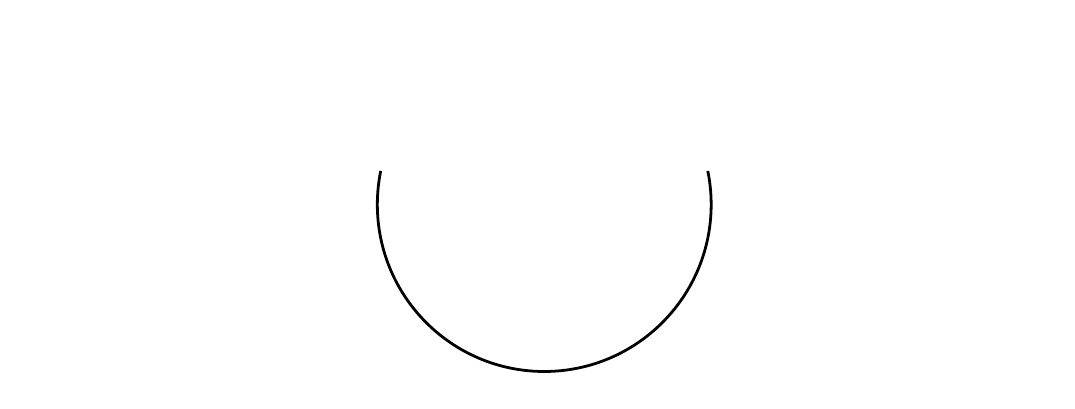 
\end{footnotesize}
\begin{small}
\caption{Decomposition of the critical circle $S^1\cdot\gamma$ as the union $(I_1\cdot\gamma)\cup (I_2\cdot\gamma)$.}
\label{f:circle}
\end{small}
\end{center}
\end{figure}%
We set $c:=E(\gamma^m)$. The relative Mayer-Vietoris sequence associated to decomposition of $(\{E<c\}\cup S^1\cdot\gamma,\{E<c\})$ as a union
\begin{align*}
(\{E<c\}\cup I_1\cdot\gamma^m,\{E<c\}) 
\cup
(\{E<c\}\cup I_2\cdot\gamma^m,\{E<c\}) 
\end{align*}
reads
\begin{align*}
\xymatrix{
\Loc_*(E,J_1\cdot\gamma^m) \oplus \Loc_*(E,J_2\cdot\gamma^m)
\ar[r]^{\alpha_*}
&
\Loc_*(E,I_1\cdot\gamma^m) \oplus \Loc_*(E,I_2\cdot\gamma^m)
\ar[d]^{\beta_*}
\\
&\Loc_*(E,S^1\cdot\gamma^m)
\ar[ul]^{\partial_*}
} 
\end{align*}
where
\begin{align*}
\alpha_*(h_1,h_2)&=(h_1+h_2,h_1+h_2) ,\\
\beta_*(h_1, h_2)&=h_1-h_2,
\end{align*}
and the connecting homomorphism $\partial_*$ lowers the grading $*$ by one. In these expressions, for simplicity, we left the compositions with the inclusion-induced homomorphisms implicit. Since the Mayer-Vietoris sequence is exact and we work with rational coefficients for the homology, we have
\begin{align*}
C_*(E,S^1\cdot\gamma^m)\cong\ker(\alpha_{*-1})\oplus\textrm{im}(\beta_*).
\end{align*}

Notice that $\mu^{1/2}=e^{\pi i/m}\in J_2$. There is a commutative diagram
\begin{align*}
\xymatrix{
\Loc_*(E,\gamma^m) \oplus \Loc_*(E,\mu^{1/2}\cdot\gamma^m)
\ar[r]^{A_*} \ar[d]_{\incl}^{\cong}
&
\Loc_*(E,\gamma^m) \oplus \Loc_*(E,\gamma^m) \ar[d]_{\cong}^{\incl}\\
\Loc_*(E,J_1\cdot\gamma^m) \oplus \Loc_*(E,J_2\cdot\gamma^m)
\ar[r]^{\alpha_*} 
&
\Loc_*(E,I_1\cdot\gamma^m) \oplus \Loc_*(E,I_2\cdot\gamma^m)
} 
\end{align*}
where the vertical homomorphisms induced by the inclusion are isomorphisms (according to Lemma~\ref{l:local_homology_and_shift}(iii)), and
\begin{align*}
A_*(h_1,h_2)=(h_1+(\mu^{-1/2})_*h_2,h_1+(\mu^{1/2})_*h_2).
\end{align*}
The kernel of $\alpha_*$ is isomorphic to the kernel of $A_*$, which is precisely
\begin{align*}
\ker(A_*) & =\big\{(h_1,h_2)\in\Loc_*(E,\gamma^m) \oplus \Loc_*(E,\mu^{1/2}\cdot\gamma^m) \ \big|\ \mu_*h_1=-(\mu^{1/2})_*h_2=h_1\big\}\\
& \cong \ker\big((\mu_*-\textrm{id}):\Loc_*(E,\gamma^m)\to\Loc_*(E,\gamma^m)\big)\\
& = \Loc_*(E,\gamma^m)^{\Z_m}.
\end{align*}
The image of $A_*$ is given by
\begin{align*}
\textrm{im}(A_*)=\Delta_* + \textrm{graph}(\mu_*),
\end{align*}
where 
\begin{align*}
\Delta_* & =\big\{(h,h)\in\Loc_*(E,\gamma^m)\oplus\Loc_*(E,\gamma^m)\big\},\\
\textrm{graph}(\mu_*) & = \big\{(h,\mu_*h)\ \big|\ h\in\Loc_*(E,\gamma^m)\big\}.
\end{align*}
Notice that
\begin{align*}
\Delta_*\cap\textrm{graph}(\mu_*) 
\cong \ker\big((\mu_*-\textrm{id}):\Loc_*(E,\gamma^m)\to\Loc_*(E,\gamma^m)\big)
= \Loc_*(E,\gamma^m)^{\Z_m},
\end{align*}
which implies that 
\begin{align*}
\dim(\Loc_*(E,\gamma^m)\oplus\Loc_*(E,\gamma^m)) - \dim(\textrm{im}(A_*)) = \dim(\Loc_*(E,\gamma^m)^{\Z_m}).
\end{align*}
Since the Mayer-Vietoris sequence is exact, this latter equality implies
\[\textrm{im}(\beta_*)\cong
\frac{\Loc_*(E,I_1\cdot\gamma^m)\oplus\Loc_*(E,I_2\cdot\gamma^m)}{\mathrm{im}(\alpha_*)}
\cong\Loc_*(E,\gamma^m)^{\Z_m}.\]
Finally, consider the surjective homomorphism 
\[\iota_*:\Loc_*(E,\gamma^m)\oplus\Loc_*(E,\gamma^m)\to\Loc_*(E,\gamma^m)\]
given by $\iota_*(h_1,h_2)=h_1-h_2$. The commutative diagram
\begin{align*}
\xymatrix{
\Loc_*(E,\gamma^m) \oplus \Loc_*(E,\gamma^m)
\ar[r]^{\quad\qquad\iota_*} \ar[d]_{\incl}^{\cong}
&
\Loc_*(E,\gamma^m) \ar[d]^{\incl}\\
\Loc_*(E,I_1\cdot\gamma^m) \oplus \Loc_*(E,I_2\cdot\gamma^m)
\ar[r]^{\qquad\quad\beta_*} 
&
\Loc_*(E,S^1\cdot\gamma^m)
} 
\end{align*}
together with the above Mayer-Vietoris sequence, implies that the kernel of the inclusion-induced homomorphism 
\begin{align*}
\Loc_*(E,\gamma^m)\ttoup^{\incl}\Loc_*(E,S^1\cdot\gamma^m)
\end{align*}
is precisely
\[
\iota_*(\mathrm{im}(A_*))=\iota_*(\Delta_*+\textrm{graph}(\mu_*))
=\iota_*(\textrm{graph}(\mu_*))=(\mu_*-\mathrm{id})\Loc_*(E,\gamma^m).
\qedhere
\]
\end{proof}

\begin{lem}
\label{l:mu_on_loc}
Let $\gamma\in\crit(E)$ be a non-iterated isolated closed geodesic. Let $m\in\N$ be such that $\nul(\gamma^m)=\nul(\gamma)$, and let $\mu=e^{2\pi i/m}$ be a generator of the subgroup $\Z_m\subset S^1$. The homomorphism $\mu_*:\Loc_*(E,\gamma^m)\to\Loc_*(E,\gamma^m)$ is equal to $(-1)^{\ind(\gamma^m)-\ind(\gamma)}\mathrm{id}$.
\end{lem}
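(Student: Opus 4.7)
The strategy is to build a $\Z_m$-equivariant local normal form for $E$ near $\gamma^m$ and to read off $\mu_*$ from the induced linear $\Z_m$-actions on the finite-dimensional negative and null eigenspaces of the Hessian. Since $\gamma^m$ is $1/m$-periodic, $\mu$ fixes $\gamma^m$, and because the standard Riemannian metric on $\Lambda M$ is $S^1$-invariant, the exponential map at $\gamma^m$ is $\Z_m$-equivariant. Taking the exponential image of a small $\Z_m$-invariant hyperplane in $\Tan_{\gamma^m}\Lambda M$ transverse to $\dot\gamma^m$ produces a $\Z_m$-invariant hypersurface $\Sigma\subset\Lambda_kM$ meeting $S^1\cdot\gamma^m$ transversely only at $\gamma^m$. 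Proposition~\ref{p:isolated} then shows that $\gamma^m$ is an isolated critical point of $E|_\Sigma$, and since $\mu(\Sigma)=\Sigma$, Lemma~\ref{l:local_homology_and_shift}(i) reduces the claim to computing $\mu_*$ on $\Loc_*(E|_\Sigma,\gamma^m)$.

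I would then apply a $\Z_m$-equivariant refinement of Gromoll--Meyer's generalized Morse lemma (obtained by averaging the standard construction over the finite group $\Z_m$) to produce a $\Z_m$-equivariant local diffeomorphism that puts $E|_\Sigma$ into the form
\[
E(\gamma^m)-\|x_-\|^2+\|x_+\|^2+E_0(y),\qquad (x_-,y,x_+)\in \E_-^\Sigma\oplus \E_0^\Sigma\oplus \E_+^\Sigma,
\]
where $\Z_m$ acts linearly via the derivative of $\mu$ at $\gamma^m$ and $E_0$ is a $\Z_m$-invariant germ with an isolated critical point at $0$. A K\"unneth-type shifting argument (``integrating out'' the two non-degenerate quadratic factors) then yields a $\Z_m$-equivariant isomorphism
\[
\Loc_*(E|_\Sigma,\gamma^m)\cong \Loc_{*-\ind(\gamma^m)}(E_0,0),
\]
under which $\mu_*$ acts as $\det(\mu|_{\E_-^\Sigma})\in\{\pm 1\}$ times the action on $\Loc_*(E_0,0)$ induced by $\mu|_{\E_0^\Sigma}$ (the positive-definite factor contributes trivially in all degrees).

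To identify these two actions, I would use the iteration isomorphism $\eta\mapsto\eta^m$ between $\Tan_\gamma\Lambda M$ and the $\Z_m$-fixed subspace of $\Tan_{\gamma^m}\Lambda M$, under which the Hessian of $E$ at $\gamma^m$ becomes, up to a positive rescaling, the Hessian of $E$ at $\gamma$. This gives $\dim(\E_-)^{\Z_m}=\ind(\gamma)$ and $\dim(\E_0)^{\Z_m}=\nul(\gamma)+1$; since $\dot\gamma^m$ is $\Z_m$-fixed and spans the complement of $\E_0^\Sigma$ in $\E_0$, one gets $\dim(\E_0^\Sigma)^{\Z_m}=\nul(\gamma)$. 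The hypothesis $\nul(\gamma^m)=\nul(\gamma)$ then forces all of $\E_0^\Sigma$ to be $\Z_m$-fixed, so $\mu_*$ acts as the identity on $\Loc_*(E_0,0)$. On the $\Z_m$-invariant complement $V$ of $(\E_-)^{\Z_m}$ inside $\E_-^\Sigma=\E_-$, which has dimension $\ind(\gamma^m)-\ind(\gamma)$ and contains no trivial subrepresentation, the real $\Z_m$-representation decomposes into $s$ one-dimensional sign summands (on which $\mu$ acts by $-1$) and $r$ two-dimensional rotation summands (on which $\mu$ acts as a determinant-one rotation); hence $\ind(\gamma^m)-\ind(\gamma)=s+2r$ and $\det(\mu|_V)=(-1)^s=(-1)^{\ind(\gamma^m)-\ind(\gamma)}$, giving the claim. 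The main technical obstacle is establishing the $\Z_m$-equivariant refinement of Gromoll--Meyer's lemma; the rest is the standard iteration isomorphism for the Hessian together with elementary $\Z_m$-representation theory.
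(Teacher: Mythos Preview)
Your proposal is correct and follows the same overall architecture as the paper's proof: pass to a $\Z_m$-invariant hypersurface $\Sigma$ transverse to $S^1\cdot\gamma^m$ via the exponential map, reduce to a Gromoll--Meyer normal form, use $\nul(\gamma^m)=\nul(\gamma)$ to see that $\mu$ acts trivially on the null eigenspace, and compute $\det(\diff\mu|_{\E_-})=(-1)^{\ind(\gamma^m)-\ind(\gamma)}$ from the fact that the $+1$-eigenspace of $\diff\mu|_{\E_-}$ is the image of the negative eigenspace at $\gamma$ under iteration and the non-real eigenvalues pair off.

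The one genuine divergence is in how the Morse-theoretic reduction is carried out. You invoke a $\Z_m$-equivariant Gromoll--Meyer lemma, so that in the chart $\mu$ acts \emph{linearly} and the K\"unneth shift reads off $\mu_*$ immediately as $\det(\mu|_{\E_-})$ times the induced map on $\Loc_*(E_0,0)$. The paper instead uses only the ordinary (non-equivariant) Gromoll--Meyer chart $\psi$; in that chart the conjugated action $\psi^{-1}\circ\mu\circ\psi$ is nonlinear, so they first retract away the positive eigenspace and then appeal to a separate technical proposition (their Proposition~\ref{p:local_homology_maps} in the appendix) which shows that any self-map of a ``$f(x)-\tfrac12|v|^2$''-type germ that fixes the center manifold and does not increase the function induces $\mathrm{sign}(\det\diff\phi(0))\cdot\mathrm{id}$ on local homology. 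Your route is more conceptual and shorter once the equivariant splitting lemma is granted; the paper's route trades that lemma for an ad hoc but elementary homotopy argument that works for arbitrary (not necessarily linearizable) sublevel-preserving maps. Either way the determinant computation and the use of the nullity hypothesis are identical.
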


\begin{proof}
Let $k\in\N$ be large enough so that $\gamma\in\Lambda_kM$. Notice that the iterate $\gamma^m$  belongs to $\Lambda_{km}M$. The space of closed broken geodesics $\Lambda_{km}M$ inherits a Riemannian metric from $(M,g)$, which we denote by $G$. More specifically, we have
\begin{align*}
G(\xi,\eta):=\sum_{i=0}^{km-1} g(\xi(\tfrac{i}{km}),\eta(\tfrac{i}{km})),
\qquad
\forall \xi,\eta\in\Tan_{\gamma^m}\Lambda_{km}M.
\end{align*}
Let $\E\subset\Tan_{\gamma^m}\Lambda_{mk}M$ be the orthogonal complement of $\mathrm{span}\{\dot\gamma\}$. Since the subgroup $\Z_m\subset S^1$ acts on $\Lambda_{km}M$ by isometries, $\E$ is $\Z_m$-invariant: for each $\theta\in\Z_m$, its differential at $\gamma^m$ restricts to an isometry $\diff\theta(\gamma^m):\E\to \E$. We denote by ``$\exp$'' the exponential map associated to $G$. Let $B_r\subset \E$ be the open ball of radius $r$ centered at the origin, and notice that $B_r$ is invariant by the isometry $\diff\theta(\gamma^m)$, for each $\theta\in\Z_m$. We choose the radius $r>0$ small enough, so that $\Sigma:=\exp_{\gamma^m}(B_r)\subset\Lambda_{mk}M$ is a well defined smooth hypersurface intersecting the critical circle $S^1\cdot\gamma^m$ only at $\gamma^m$ with a transverse (and even normal) intersection. Since $\theta\circ\exp_{\gamma^m}=\exp_{\gamma^m}\circ\diff\theta(\gamma^m)$, any $\theta\in\Z_m$ restricts to an isometry of $\Sigma$. By Lemma~\ref{l:local_homology_and_shift}(i), it is enough to show that the generator $\mu\in\Z_m$ induces the local homology isomorphism
\begin{align*}
 \mu_*=(-1)^{\ind(\gamma^m)-\ind(\gamma)}\mathrm{id}:\Loc_*(E|_{\Sigma},\gamma^m)\to\Loc_*(E|_{\Sigma},\gamma^m).
\end{align*}
Let $H:\Tan_{\gamma^m}\Lambda_{mk}M\to\Tan_{\gamma^m}\Lambda_{mk}M$ be the self-adjoint linear map associated to the Hessian $\diff^2E(\gamma^m)$, that is, $\diff^2E(\gamma^m)[\xi,\eta]=G(H\xi,\eta)$. The tangent space $\Tan_{\gamma^m}\Lambda_{mk}M$ splits as the direct sum
\begin{align}
\label{e:splitting}
 \Tan_{\gamma^m}\Lambda_{mk}M
 =
 \mathrm{span}\{\dot\gamma\}\oplus\E^0\oplus\E^-\oplus\E^+,
\end{align}
where $\E=\E^0\oplus\E^-\oplus\E^+$, $\ker(H)=\mathrm{span}\{\dot\gamma\}\oplus\E^0$, $\E^-$ is the negative eigenspace of $H$, and $\E^+$ is its positive eigenspace. Since the energy function $E$ is $\Z_m$-invariant, the differential $\diff\mu(\gamma^m)$ preserves the splitting~\eqref{e:splitting}. The eigenvalues of the restrictions $\diff\mu(\gamma^m)|_{\E_-}$, repeated according to their multiplicities, are
\begin{align*}
\underbrace{1,...,1}_{n_-},\underbrace{-1,...,-1}_{n_+},\lambda_1,\overline\lambda_1,...,\lambda_j,\overline\lambda_j,
\end{align*}
where the $\lambda_i$'s are complex $m$-th roots of unity different from $\pm1$. A vector $\xi\in\E_-$ satisfies $\diff\mu(\gamma^m)\xi=\xi$ if and only if it has the form $\xi=\eta^m$ for some vector $\eta$ that is in the negative eigenspace of the operator associated to $\diff^2E(\gamma)$. This shows that $n_-=\ind(\gamma)$. Since $\ind(\gamma^m)=\dim\E_-=n_-+n_++2j$, we have
\begin{align}
\label{e:det}
 \det\diff\mu(\gamma^m)|_{\E_-}
 =
 (-1)^{n_+}
 =
 (-1)^{\ind(\gamma^m)-\ind(\gamma)}.
\end{align}
By the generalized Morse Lemma \cite[Lemma~1]{Gromoll:1969jy}, there exist convex neighborhoods of the origin 
$U_0\subset\E^0\oplus\E^-$ and $U_1\subset\E^+$, 
and a diffeomorphism onto its image $\psi:U:=U_0\times U_1\to\Sigma$ such that $\psi(\bm0)=\gamma$, $\diff\psi(\bm0)=\mathrm{id}$, and $F:=E\circ\psi$ has the form 
\begin{align*}
F(\xi_0,\xi_-,\xi_+)
&=f(\xi_0)+\tfrac12\diff^2E(\gamma^m)[(\xi_-,\xi_+),(\xi_-,\xi_+)]\\ 
&=f(\xi_0)
-\tfrac12|G(H\xi_-,\xi_-)|^2
+\tfrac12|G(H\xi_+,\xi_+)|^2,
\end{align*}
for some smooth function $f$. The homotopy 
\[r_s:U\to U,
\quad
r_s(\xi_0,\xi_-,\xi_+)=(\xi_0,\xi_-,(1-s)\xi_+),
\qquad s\in[0,1],\] is a deformation retraction onto $U_0$ such that $F\circ r_s\leq F$. Its time-1 map $r_1$ restricts to the homotopy inverse of the inclusion of pairs
\[
\big(\{F|_{U_0}<F(\bm0)\}\cup\{\bm0\},\{F|_{U_0}<F(\bm0)\}\big)
\subseteq
\big(\{F<F(\bm0)\}\cup\{\bm0\},\{F<F(\bm0)\}\big),
\]
which therefore induces the local homology isomorphism
\begin{align*}
 \Loc_*(F|_{U_0},\bm0)
 \ttoup_{\cong}^{\incl}
 \Loc_*(F,\bm0).
\end{align*}
Let $V_0\subset U_0$ be an open neighborhood of the origin that is sufficiently small so that $\psi^{-1}\circ\mu\circ\psi(V_0)\subset U$. Let $\phi_s:=r_s\circ\psi^{-1}\circ\mu\circ\psi$, and notice that $F\circ \phi_s\leq F$. In particular, all the $\phi_s$ induce the same local homology isomomorphism
\begin{align*}
(\phi_s)_*=(\phi_0)_*:\Loc_*(F|_{V_0},\bm0)\ttoup^{\cong}\Loc_*(F,\bm0),
\qquad\forall s\in[0,1],
\end{align*}
Since $\phi_1(V_0)\subset U_0$, the homomorphism $(\phi_1)_*$ factorizes as 
\begin{align*}
\xymatrix{
\Loc_*(F|_{V_0},\bm0)
\ar[rr]^{(\phi_1)*}_{\cong} 
\ar[ddr]_{(\phi_1)*} 
& &
\Loc_*(F,\bm0)\\\\
&\Loc_*(F|_{U_0},\gamma^m)
\ar[uur]^{\incl}_{\cong}
& 
} 
\end{align*}
and therefore $\phi:=\phi_1$ also induces the local homology isomorphism
\begin{align*}
 \phi_*:\Loc_*(F|_{V_0},\bm0)\ttoup_{\cong}^{\incl}\Loc_*(F|_{U_0},\bm0).
\end{align*}
Summing up, we have the commutative diagram of local homology isomorphisms
\begin{equation}\label{e:comm_diag}
\xymatrix{
\Loc_*(E,\gamma^m)
\ar[rr]^{\mu_*}_{\cong} 
& &
\Loc_*(E,\gamma^m)\\\\
\Loc_*(F|_{V_0},\gamma^m)
\ar[rr]^{\phi_*}_{\cong} 
\ar[uu]^{\psi_*}_{\cong}
& &
\Loc_*(F|_{U_0},\gamma^m)
\ar[uu]_{\psi_*}^{\cong}
} 
\end{equation}
Since $\diff\psi(\bm0)$ preserves the splitting~\eqref{e:splitting}, we have
\begin{align*}
\diff\phi(\bm0)|_{\E_0}& =\diff\mu(\gamma^m)|_{\E_0}=\mathrm{id},\\
\diff\phi(\bm0)|_{\E_-}& =\diff\mu(\gamma^m)|_{\E_-},
\end{align*}
and thus, by~\eqref{e:det},
\begin{equation}\label{e:det_diff_phi}
\begin{split}
  \mathrm{sign}(\det\diff\phi(\bm0)) & = \mathrm{sign}(\det\diff\mu(\gamma^m)|_{\E_-})\\
  &=\det\diff\mu(\gamma^m)|_{\E_-}\\
  &=(-1)^{\ind(\gamma^m)-\ind(\gamma)}.
\end{split}
\end{equation}
The function $F$ and the map $\phi$ satisfy the assumptions of Proposition~\ref{p:local_homology_maps}, which therefore implies 
\[\phi_*=(\mathrm{sign}(\det\diff\phi(\bm0)))\mathrm{id}=(-1)^{\ind(\gamma^m)-\ind(\gamma)}\mathrm{id}.\] 
This, together the commutative diagram~\eqref{e:comm_diag}, allows to conclude
\[
\mu_*
=\psi_*^{-1}\circ\phi_*\circ\psi_*
=\psi_*^{-1}\circ(\mathrm{sign}(\det\diff\phi(\bm0)))\mathrm{id}\circ\psi_*=(-1)^{\ind(\gamma^m)-\ind(\gamma)}\mathrm{id}.
\qedhere
\]
\end{proof}

The Morse indices of iterated closed geodesics can be computed by means of Bott's iteration theory \cite{Bott:1956sp} (see also~\cite{Long:2002ed, Mazzucchelli:2015zc} for more recent accounts). More specifically, there exists two functions $\Lambda:S^1\to\N\cup\{0\}$ and $N:S^1\to\N\cup\{0\}$ associated to the given closed geodesic $\gamma$, such that for all $m\in\N$ we have
\begin{align}
\label{e:Bott_formula_index}
\ind(\gamma^m) & = \sum_{z\in\!\sqrt[m]{1}} \Lambda(z),\\
\label{e:Bott_formula_nullity}
\nul(\gamma^m) & = \sum_{z\in\!\sqrt[m]{1}} N(z).
\end{align}
If  $\phi_t$ denotes the geodesic flow on $\Tan M$, the function $N$ is given by
\begin{align*}
N(z) := \dim_{\C}\ker_{\C}(\diff\phi_1(\gamma(0),\dot\gamma(0))-z\,\mathrm{id}),
\qquad\forall z\in S^1\subset\C.
\end{align*}
In particular, the support of $N$ is finite. 
The definition of $\Lambda$ is more involved. Here, we just recall that $\Lambda$ is a piecewise constant function with at most finitely many discontinuities contained in the support of $N$. In particular
\begin{align}
\label{e:Bott_average_idx}
 \avind(\gamma)=\lim_{m\to\infty}\frac{\ind(\gamma^m)}{m} = \frac{1}{2\pi} \int_0^{2\pi} \Lambda(e^{i2\pi t})\,\diff t.
\end{align}
Moreover, $\Lambda$ satisfies
\begin{align}
\label{e:Lambda_conj_inv}
 \Lambda(z)=\Lambda(\overline z),
 \qquad
 \forall z\in S^1.
\end{align}
These properties of Bott's functions $\Lambda$ and $N$ allow to draw the following corollary of Lemma~\ref{l:mu_on_loc}.

\begin{cor}
\label{c:incl_local_homologies}
Let $\gamma\in\crit(E)$ be a non-iterated isolated closed geodesic. For all sufficiently large prime numbers $m$, the inclusion induces an injective homomorphism
\begin{align}
\label{e:injective_incl}
 \Loc_*(E,\gamma^m) \eembup^{\incl} \Loc_*(E,S^1\cdot\gamma^m).
\end{align}
\end{cor}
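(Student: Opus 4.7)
The plan is to combine the three ingredients already at our disposal: Lemma~\ref{l:exact_sequence}, Lemma~\ref{l:mu_on_loc}, and Bott's iteration formulas~\eqref{e:Bott_formula_index}--\eqref{e:Bott_formula_nullity}. By Lemma~\ref{l:exact_sequence}, the kernel of the inclusion-induced homomorphism in~\eqref{e:injective_incl} is exactly $(\mu_*-\mathrm{id})\Loc_*(E,\gamma^m)$, where $\mu=e^{2\pi i/m}$ generates $\Z_m\subset S^1$. Hence it suffices to show that, for all sufficiently large primes $m$, the operator $\mu_*$ acts as the identity on $\Loc_*(E,\gamma^m)$. By Lemma~\ref{l:mu_on_loc}, this will follow once we establish the two conditions (a)~$\nul(\gamma^m)=\nul(\gamma)$ and (b)~$\ind(\gamma^m)-\ind(\gamma)$ is even.

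For condition~(a) I would use Bott's formula~\eqref{e:Bott_formula_nullity}: since the function $N:S^1\to\N\cup\{0\}$ has finite support, the set
\[S:=\big\{k\in\N\ \big|\ \exists z\in \supp(N)\setminus\{1\}\mbox{ with }z^k=1\big\}\]
is finite (non-root-of-unity elements of $\supp(N)$ contribute to no such $k$). For any prime $m$ strictly larger than every element of $S$, no $z\in\supp(N)\setminus\{1\}$ is an $m$-th root of unity, and Bott's formula reduces to $\nul(\gamma^m)=N(1)=\nul(\gamma)$.

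For condition~(b) I would use~\eqref{e:Bott_formula_index} together with the conjugation-invariance~\eqref{e:Lambda_conj_inv}: one has
\[\ind(\gamma^m)-\ind(\gamma)=\sum_{z^m=1,\,z\neq 1}\Lambda(z).\]
For any odd $m$ (in particular any prime $m\geq 3$), the set of $m$-th roots of unity different from $1$ splits into conjugate pairs $\{z,\bar z\}$, and each such pair contributes $2\Lambda(z)\in 2\Z$ to the sum. Hence $\ind(\gamma^m)-\ind(\gamma)$ is even whenever $m$ is an odd prime.

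Combining~(a) and~(b), for every sufficiently large prime $m$ we obtain from Lemma~\ref{l:mu_on_loc} that $\mu_*=\mathrm{id}$ on $\Loc_*(E,\gamma^m)$, so $(\mu_*-\mathrm{id})\Loc_*(E,\gamma^m)=0$, and the inclusion-induced homomorphism in~\eqref{e:injective_incl} is injective. I expect no real obstacle here: the argument is essentially an application of the three preceding results, and the only mildly delicate point is observing that the parity condition~(b) automatically holds for \emph{all} odd $m$ rather than requiring a separate largeness assumption, so the quantitative threshold on $m$ is dictated purely by the finite set $S$ appearing in~(a).
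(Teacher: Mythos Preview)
Your approach is essentially identical to the paper's own proof: both invoke Lemma~\ref{l:exact_sequence} to identify the kernel of the inclusion with $(\mu_*-\mathrm{id})\Loc_*(E,\gamma^m)$, then use Lemma~\ref{l:mu_on_loc} together with Bott's formulas to verify $\nul(\gamma^m)=\nul(\gamma)$ and the parity of $\ind(\gamma^m)-\ind(\gamma)$ for large primes~$m$.

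There is, however, one slip in your treatment of condition~(a). The set
\[
S:=\big\{k\in\N\ \big|\ \exists\, z\in \supp(N)\setminus\{1\}\mbox{ with }z^k=1\big\}
\]
is \emph{not} finite in general: if $z\in\supp(N)\setminus\{1\}$ is a primitive $n$-th root of unity, then every multiple of $n$ lies in $S$. What you should observe instead is that $\supp(N)$ is finite, so there are only finitely many \emph{primes} $p$ for which some $z\in\supp(N)\setminus\{1\}$ satisfies $z^p=1$ (indeed, for a prime $p$ this forces $z$ to be a primitive $p$-th root of unity). Once $m$ exceeds all such primes, your conclusion $\nul(\gamma^m)=N(1)=\nul(\gamma)$ follows. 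With this minor correction the argument is complete and matches the paper.
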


\begin{proof}
If $m$ is a large enough prime number, we have $N(z)=0$ for each $z\in \sqrt[m]{1}$ with $z\neq1$, which implies 
 $\nul(\gamma^m)=\nul(\gamma)$.
Moreover, if $m>2$, in particular it is odd. Therefore, Equation~\eqref{e:Lambda_conj_inv} implies
\begin{align*}
\ind(\gamma^m) & = \ind(\gamma) + 2\!\!\!\sum_{\textstyle
  \substack{    z\in\!\sqrt[m]{1},\\
    \mathrm{Im}(z)>0 }
} \Lambda(z),
\end{align*}
so that $(-1)^{\ind(\gamma^m)-\ind(\gamma)}=1$. This,  together with Lemma~\ref{l:mu_on_loc}, implies that the isomomorphism $\mu_*:\Loc_*(E,\gamma^m)\to\Loc_*(E,\gamma^m)$ is the identity. The exact sequence provided by Lemma~\ref{l:exact_sequence} implies that the inclusion induces the injective homomorphism~\eqref{e:injective_incl}.
\end{proof}

We denote by $\psi^m:\Lambda M\hookrightarrow\Lambda M$ the iteration map $\psi^m(\gamma)=\gamma^m$, which is a smooth embedding of the free loop space into itself. The following lemma will be useful in the proof of Theorem~\ref{t:main} in case the closed geodesic of the statement has zero average index. Its proof can be extracted from the arguments in \cite{Gromoll:1969gh}.

\begin{lem}
\label{l:loc_isom_iteration}
Let $\gamma\in\crit(E)$ be a closed geodesic such that $\avind(\gamma)=0$. For each integer $m\in\N$ such that $\nul(\gamma)=\nul(\gamma^m)$, the iteration map induces the local homology isomorphism
\begin{align*}
 \psi^m : \Loc_*(E,S^1\cdot\gamma) \toup^{\cong}\Loc_*(E,S^1\cdot\gamma^m).
\end{align*}
\end{lem}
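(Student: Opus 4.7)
My plan is to use Gromoll--Meyer's reduction to finite-dimensional submanifolds tangent to the null eigenspaces, and to observe that the iteration map naturally intertwines the two reductions modulo the scaling identity $E\circ\psi^m=m^2\,E$.

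I would first extract the needed index information from Bott's formulas. Since $\Lambda\colon S^1\to\N\cup\{0\}$ is non-negative, the assumption $\avind(\gamma)=0$ combined with \eqref{e:Bott_average_idx} forces $\Lambda\equiv 0$; then \eqref{e:Bott_formula_index} yields $\ind(\gamma^j)=0$ for every $j\in\N$. Moreover, $\nul(\gamma^m)=\nul(\gamma)$ together with \eqref{e:Bott_formula_nullity} gives $N(z)=0$ for every $z\in\sqrt[m]{1}\setminus\{1\}$. Interpreting Jacobi fields along $\gamma^m$ as $m$-periodic Jacobi fields along $\gamma$, this last equation shows that the differential of the iteration map sends the null eigenspace of the Hessian of $E$ at $\gamma$ (inside the orthogonal complement of $\dot\gamma$) isomorphically onto the corresponding null eigenspace $\E^0_{\gamma^m}$ at $\gamma^m$; and both Hessians have trivial negative eigenspace.

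Next, fix $k\in\N$ with $\gamma\in\Lambda_kM$, so that $\gamma^m\in\Lambda_{km}M$, and pick a transverse section $\Sigma\subset\Lambda_kM$ to $S^1\cdot\gamma$ at $\gamma$. Choose a smooth submanifold $N\subset\Sigma$ of dimension $\nul(\gamma)$ tangent at $\gamma$ to the null eigenspace of $\diff^2(E|_\Sigma)(\gamma)$, and set $N^m:=\psi^m(N)\subset\Lambda_{km}M$. By the previous step, $N^m$ is a smooth submanifold of dimension $\nul(\gamma)=\nul(\gamma^m)$ whose tangent space at $\gamma^m$ is exactly $\E^0_{\gamma^m}$. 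Since $\E^0_{\gamma^m}$ is transverse to $\dot\gamma^m$, we may extend to a transverse section $\Sigma^m\subset\Lambda_{km}M$ to $S^1\cdot\gamma^m$ at $\gamma^m$ containing $N^m$. Applying the generalized Morse lemma \cite[Lemma~1]{Gromoll:1969jy} at $\gamma$ and at $\gamma^m$, whose Morse indices both vanish, the inclusions induce local homology isomorphisms
\[ \Loc_*(E|_N,\gamma)\ttoup^{\incl}_{\cong}\Loc_*(E|_\Sigma,\gamma),\qquad \Loc_*(E|_{N^m},\gamma^m)\ttoup^{\incl}_{\cong}\Loc_*(E|_{\Sigma^m},\gamma^m), \]
whose targets are identified with $\Loc_*(E,\gamma)$ and $\Loc_*(E,\gamma^m)$ respectively via Lemma~\ref{l:local_homology_and_shift}(i). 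Since $\psi^m\colon N\to N^m$ is a diffeomorphism with $E|_{N^m}\circ\psi^m=m^2\, E|_N$, it induces an isomorphism $\Loc_*(E|_N,\gamma)\toup^{\cong}\Loc_*(E|_{N^m},\gamma^m)$, and composing all of these shows that $\psi^m\colon\Loc_*(E,\gamma)\to\Loc_*(E,\gamma^m)$ is an isomorphism.

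To pass from critical points to critical circles, I would invoke the Mayer--Vietoris argument of Lemma~\ref{l:exact_sequence}: combined with Lemma~\ref{l:mu_on_loc} (applied after reducing to the underlying non-iterated geodesic $\zeta$ with $\gamma=\zeta^n$, for which $\avind(\zeta)=0$ still holds), the vanishing of all Morse indices forces the action of the isotropy generator on the pointwise local homologies to be the identity. One then obtains natural decompositions $\Loc_d(E,S^1\cdot\gamma)\cong\Loc_d(E,\gamma)\oplus\Loc_{d-1}(E,\gamma)$ and its analogue for $\gamma^m$, which are respected by the iteration map, so the desired circle-wise isomorphism follows from the pointwise one. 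The main technical subtlety is the compatible choice of the section $\Sigma^m$ around the submanifold $\psi^m(N)$, which relies on the fact that the image of $\psi^m$ coincides with the $\Z_m$-fixed locus inside $\Lambda_{km}M$.
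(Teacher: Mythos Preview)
Your argument has a genuine gap at the Gromoll--Meyer reduction step for $\gamma^m$. The generalized Morse lemma produces a \emph{specific} characteristic submanifold, and the local homology isomorphism $\Loc_*(E|_{N^m},\gamma^m)\cong\Loc_*(E|_{\Sigma^m},\gamma^m)$ is only guaranteed for that one, not for an arbitrary submanifold tangent to the null eigenspace. Your $N^m=\psi^m(N)$ is merely tangent to $\E^0_{\gamma^m}$, and this is insufficient: take $f(x,y)=-x^4+y^2$ on $\R^2$ (index~$0$, null direction $\partial_x$); the characteristic submanifold $\{y=0\}$ gives $f|_{\{y=0\}}=-x^4$ with local homology $\Q$ in degree~$1$, whereas the tangent submanifold $\{y=2x^2\}$ gives $f|_{\{y=2x^2\}}=3x^4$ with local homology $\Q$ in degree~$0$. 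So you cannot conclude the second isomorphism without an additional argument, and with that step missing the identification $\Loc_*(E|_{N^m},\gamma^m)\cong\Loc_*(E,\gamma^m)$ is precisely what you are trying to prove.

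What is missing is the ingredient the paper supplies: using that $\psi^m(\Lambda_kM)$ is the $\Z_m$-fixed locus and that the $\Z_m$-action is by isometries preserving $E$, one shows $\nabla E$ is everywhere tangent to $\psi^m(\Lambda_kM)$. This forces the Hessian $\diff^2E(\tau\cdot\gamma^m)$ to vanish on mixed tangent/normal pairs, and then the hypotheses $\ind(\gamma^m)=0$ and $\nul(\gamma^m)=\nul(\gamma)$ make it positive definite on the normal bundle along the whole critical circle. That yields a sublevel-preserving tubular retraction onto $\psi^m(U)$ and hence the isomorphism $\Loc_*(E,S^1\cdot\gamma^m)\cong\Loc_*(E|_{\psi^m(\Lambda_kM)},S^1\cdot\gamma^m)$ directly at the circle level. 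Note that this normal positivity is exactly what fails in the counterexample above (there $\nabla f$ is not tangent to $\{y=2x^2\}$), and once you have it the detour through critical points and the Mayer--Vietoris comparison becomes unnecessary.
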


\begin{proof}
Since $\avind(\gamma)=0$, Equation~\eqref{e:Bott_average_idx} implies that Bott's function $\Lambda:S^1\to\N\cup\{0\}$ associated to $\gamma$ vanishes identically. By~\eqref{e:Bott_formula_index}, we have
\begin{align}
\label{e:zero_indices}
\ind(\gamma^m)=\sum_{z\in\!\sqrt[m]1} \Lambda(z)=0,\qquad\forall m\in\N.
\end{align}
Let us fix $m\in\N$ such that $\nul(\gamma)=\nul(\gamma^m)$.

It is well known that, if we choose $k\in\N$ large enough, the Morse index and the nullity of $E$ at $S^1\cdot\gamma^{m}$ are the same at those of the restriction $E|_{\Lambda_{km}M}$ at $\gamma$. Since the inclusion induces the local homology isomorphisms
\begin{align*}
\Loc_*(E|_{\Lambda_{k}M},S^1\cdot\gamma)
& \ttoup^{\cong}
\Loc_*(E,S^1\cdot\gamma),
\\
\Loc_*(E|_{\Lambda_{km}M},S^1\cdot\gamma^m)
& \ttoup^{\cong}
\Loc_*(E,S^1\cdot\gamma^m),
\end{align*}
it is enough for us to prove that the iteration map induces the local homology isomorphism
\begin{align*}
\psi^m_*:
\Loc_*(E|_{\Lambda_{k}M},S^1\cdot\gamma)
 \ttoup^{\cong}
 \Loc_*(E|_{\Lambda_{km}M},S^1\cdot\gamma^m).
\end{align*}
From now on, in order to simplify the notation, we will omit to write the restriction to $\Lambda_{km}M$, and thus we will consider $E$ as a function of the form
\begin{align*}
 E:\Lambda_{km}M\to[0,\infty).
\end{align*}

We recall that $\Lambda_{km}M$ is diffeomorphic to an open subset of the $km$-fold product $M\times...\times M$ via the map $\zeta\mapsto(\zeta(0),\zeta(\tfrac {1}{km}),...,\zeta(\tfrac{km-1}{km}))$. The subgroup $\Z_m\subset S^1$ acts on $\Lambda_{km}M$ by isometries with respect of the Riemannian metric
\begin{align*}
G(\xi,\eta):=\sum_{i=0}^{km-1} g(\xi(\tfrac{i}{km}),\eta(\tfrac{i}{km})),
\qquad
\forall \xi,\eta\in\Tan_{\zeta}\Lambda_{km}M.
\end{align*}
Since the energy function $E$ on $\Lambda_{km}M$ has the simple expression
\begin{align*}
E(\zeta) = km \sum_{i=0}^{km-1} \dist(\zeta(\tfrac{i}{km}),\zeta(\tfrac{i+1}{km}))^2,
\qquad\forall \zeta\in\Lambda_{km}M,
\end{align*}
we readily see that 
\begin{align*}
\diff E(\zeta^m)\diff\theta(\zeta^m)=\diff E(\zeta^m),
\qquad
\forall\zeta\in\Lambda_kM,\ \theta\in\Z_m\subset S^1. 
\end{align*}
We denote by $\nabla E$ the gradient of $E$ with respect to the Riemannian metric $G$. For each $\theta\in\Z_m$, $\zeta\in\Lambda_kM$, and $\eta\in\Tan_{\zeta^m}\Lambda_{km}M$, we have
\begin{align*}
G(\diff\theta(\zeta^m)\nabla E(\zeta^m),\eta)
&=
G(\nabla E(\zeta^m),\diff\theta^{-1}(\zeta^m)\eta)\\
&=
\diff E(\zeta^m)\circ \diff\theta^{-1}(\zeta^m)\eta\\
&=
\diff E(\zeta^m)\eta\\
&=
G(\nabla E(\zeta^m),\eta),
\end{align*}
and therefore the differential of the $\Z_m$ action on $\Lambda_{km}M$ leaves $\nabla E$ invariant, i.e.
\begin{align*}
 \diff\theta(\zeta^m)\nabla E(\zeta^m)=\nabla E(\zeta^m),
 \qquad
 \forall\zeta\in\Lambda_kM,\ \theta\in\Z_m.
\end{align*}
Geometrically, this means that  $\nabla E$ is tangent to the submanifold of iterated curves $\psi^m(\Lambda_kM)$, i.e.
\begin{align}
\label{e:gradient_iterated}
 \nabla E(\zeta^m)\in\Tan_{\zeta^m}(\psi^m(\Lambda_kM)).
\end{align}
We denote by $N(\psi^m(\Lambda_kM))$ the normal bundle of $\psi^m(\Lambda_kM)\subset\Lambda_{km}M$, which is the vector bundle over $\psi^m(\Lambda_kM)$ whose fibers are the normal to the tangent spaces of $\psi^m(\Lambda_kM)$. Equation~\eqref{e:gradient_iterated} readily implies that
\begin{align*}
 \diff^2E(\zeta^m)[\xi^m,\eta]=0,
 \qquad
 \forall\xi^m\in\Tan_{\zeta^m}(\psi^m(\Lambda_kM)),\ \eta\in N_{\zeta^m}(\psi^m(\Lambda_kM)).
\end{align*}
This, together with the fact that $\ind(\gamma)=\ind(\gamma^m)=0$ and $\nul(\gamma)=\nul(\gamma^m)$, readily implies that the Hessian $\diff^2E(\tau\cdot\gamma^m)$ is positive definite on $N_{\tau\cdot\gamma^m}(\psi^m(\Lambda_kM))$ for all $\tau\in S^1$. Therefore, for each sufficiently small neighborhood $U\subset \Lambda_kM$ of the critical circle $S^1\cdot\gamma$, there exists a tubular neighborhood $W\subset \Lambda_{km}M$ of $\psi^m(U)$ that we can identify with an open neighborhood of the zero section of the normal bundle $N(\psi^m(U))$, such that the restriction of $E$ to any fiber of $W$ has a non-degenerate local minimum at the intersection with the zero section. Up to shrinking $W$, the radial contractions give a deformation retraction $r_s:W\to W$ such that $r_0=\mathrm{id}$, $r_1:W\to\psi^m(U)$, and $E\circ r_s\leq E$ for each $s\in[0,1]$. In particular, the time-1 map $r_1$ is a homotopy inverse for the inclusions
\begin{align*}
\psi^m(U)\cap\{E<E(\gamma^m)\}& \ \  \subset \ \ W\cap\{E<E(\gamma^m)\}, \\
\psi^m(U)\cap\{E<E(\gamma^m)\}\cup S^1\cdot\gamma^m& \ \ \subset \ \ W\cap\{E<E(\gamma^m)\}\cup S^1\cdot\gamma^m.
\end{align*}
Therefore, the inclusion induces the local homology isomorphism
\begin{align*}
\Loc_*(E|_{\psi^m(U)},S^1\cdot\gamma^m) \ttoup^{\incl}_{\cong}
\Loc_*(E|_{W},S^1\cdot\gamma^m),
\end{align*}
which is equivalent to say that the iteration map induces the 
local homology isomorphism
\[
\psi^m_*:\Loc_*(E,S^1\cdot\gamma) \ttoup^{\cong}
\Loc_*(E,S^1\cdot\gamma^m).
\qedhere
\]
\end{proof}

\section{The Morse index in the based loop space}
\label{s:idx_based}

Let $\gamma:\R/\Z\to M$ be a closed geodesic in a Riemannian manifold $(M,g)$. We set $q_*:=\gamma(0)$, and consider the subspace $\Omega M\subset\Lambda M$ of loops in $M$ based at $q_*$, that is
\begin{align}\label{e:based_loop_space}
\Omega M:=\big\{ \zeta\in\Lambda M\ |\ \zeta(0)=q_* \big\}.
\end{align}
So far we have considered the Morse index $\ind(\gamma)$ and the nullity $\nul(\gamma)$ of the energy function $E$ at $S^1\cdot\gamma$. 
The closed geodesic $\gamma$ is in particular a critical point of the restricted energy $E|_{\Omega M}$, and we denote by $\ind_\Omega(\gamma)$ and $\nul_\Omega(\gamma)$ the corresponding index and nullities, that is,
\begin{align*}
\ind_\Omega(\gamma) & := \max\big\{ \dim\E\ \big|\ \E\subset\Tan_\gamma\Omega M\mbox{ with }\diff^2 E(\gamma)[\xi,\xi]<0\ \forall\xi\in\E\setminus\{0\}\big\},\\
\nul_\Omega(\gamma) & := \dim\ker(\diff^2 E|_{\Omega M}(\gamma)).
\end{align*}
Notice that
\begin{align*}
 \ind_\Omega(\gamma) +\nul_\Omega(\gamma) = \max\big\{ \dim\E\ \big|\ \E\subset\Tan_\gamma\Omega M\mbox{ with }\diff^2 E(\gamma)[\xi,\xi]\leq 0\ \forall\xi\in\E\big\}.
\end{align*}
It readily follows from the definition of the Morse index that $\ind_{\Omega}(\gamma)\leq\ind(\gamma)$. Moreover, 
\begin{align}\label{e:Ball_Thor_Zill}
\ind_\Omega(\gamma)+\nul_\Omega(\gamma)
\leq
\ind(\gamma)+\nul(\gamma)
\leq
\ind_\Omega(\gamma)+\nul_\Omega(\gamma)+\dim(M)-1,
\end{align}
see \cite[Eq.~(1.7)]{Ballmann:1982rz}. As for the behavior of the $\Omega$-Morse indices under iteration, we have the following inequalities, which were remarked in \cite[pages 256-257]{Hingston:1993ou}.
\begin{lem}\label{l:iteration_Omega_index}
For all $m\in\N$,
\begin{align*}
\ind_\Omega(\gamma^m) & \geq m \, \ind_\Omega(\gamma),\\
 \ind_\Omega(\gamma^m)+\nul_\Omega(\gamma^m) & \geq m \big( \ind_\Omega(\gamma) + \nul_\Omega(\gamma)\big).
\end{align*}
\end{lem}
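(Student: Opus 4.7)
The plan is to produce, out of negative (resp.\ non-positive) directions for $\diff^2E(\gamma)|_{\Tan_\gamma\Omega M}$, an $m$-fold number of disjointly supported negative (resp.\ non-positive) directions for $\diff^2E(\gamma^m)|_{\Tan_{\gamma^m}\Omega M}$. Concretely, for every $\xi\in\Tan_\gamma\Omega M$, i.e.\ a $W^{1,2}$ vector field along $\gamma$ with $\xi(0)=\xi(1)=0$, and every $j\in\{0,\ldots,m-1\}$, I would set
\begin{align*}
\xi_j(t):=\begin{cases} \xi(mt-j) & \text{if }t\in[j/m,(j+1)/m],\\ 0 & \text{otherwise}. \end{cases}
\end{align*}
Since $\xi$ vanishes at the endpoints, each $\xi_j$ is continuous on $\R/\Z$, is $W^{1,2}$, vanishes at $0$, and hence lies in $\Tan_{\gamma^m}\Omega M$.

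The core computation is the change of variables $s=mt-j$ in the second-variation formula for the energy. Because $\dot\gamma^m(t)=m\dot\gamma(s)$ and $\nabla_{\dot\gamma^m}\xi_j(t)=m\nabla_{\dot\gamma}\xi(s)$, both the kinetic and curvature terms pick up a factor $m^2$, while $\diff t=\diff s/m$, yielding
\begin{align*}
\diff^2E(\gamma^m)[\xi_j,\xi_j]
=\int_{j/m}^{(j+1)/m}\!\!\!\!\!\big(|\nabla_{\dot\gamma^m}\xi_j|^2-R(\xi_j,\dot\gamma^m,\dot\gamma^m,\xi_j)\big)\diff t
=m\,\diff^2E(\gamma)[\xi,\xi].
\end{align*}
Moreover, since $\xi_j$ and $\xi_l$ have disjoint supports for $j\neq l$, all mixed terms $\diff^2E(\gamma^m)[\xi_j,\xi_l']$ vanish.

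Now take a subspace $\mathbb{E}^-\subset\Tan_\gamma\Omega M$ of dimension $\ind_\Omega(\gamma)$ on which $\diff^2E(\gamma)$ is negative definite, and set $\mathbb{F}:=\bigoplus_{j=0}^{m-1}\mathbb{E}^-_j$ where $\mathbb{E}^-_j:=\{\xi_j\mid\xi\in\mathbb{E}^-\}$. Linear independence across the index $j$ follows from disjointness of supports, while linear independence within each $j$ follows from that of $\mathbb{E}^-$ (under the reparametrization $t\mapsto mt-j$). Thus $\dim\mathbb{F}=m\,\ind_\Omega(\gamma)$, and by the block-diagonal form of $\diff^2E(\gamma^m)|_{\mathbb{F}}$ together with the scaling identity, $\diff^2E(\gamma^m)$ is negative definite on $\mathbb{F}$, proving $\ind_\Omega(\gamma^m)\ge m\,\ind_\Omega(\gamma)$. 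Repeating the same construction with a maximal subspace on which $\diff^2E(\gamma)$ is non-positive yields the second inequality.

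I do not anticipate any real obstacle: the main point is the factor-of-$m$ scaling of the Hessian under iteration of a disjointly-supported variation, which I would verify cleanly by change of variables. The only mild care is to check that the concatenation $\xi_j$ is genuinely in $W^{1,2}$ with the expected weak derivative --- this is immediate once one observes that extending a $W^{1,2}$ vector field that vanishes at both endpoints by zero produces no boundary distributional term.
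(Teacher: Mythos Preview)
Your proof is correct and is essentially the same argument as the paper's: both exploit that vector fields supported on disjoint subintervals are Hessian-orthogonal, so a negative (resp.\ non-positive) subspace for $\diff^2E(\gamma)$ yields $m$ independent copies inside $\Tan_{\gamma^m}\Omega M$. The only cosmetic difference is that the paper reparametrizes $\gamma^m$ over $[0,m]$ via the path-space functionals $E_{t_1,t_2}$, which sidesteps the factor-of-$m$ scaling computation you carry out explicitly; either way the block-diagonal structure of the Hessian on the direct sum does the work.
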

\begin{proof}
For each $[t_1,t_2]\in\R$, we introduce the path space
\begin{align*}
\Omega_{t_1,t_2} := \big\{ \zeta\in W^{1,2}([t_1,t_2],M)\ \big|\ \zeta(t_1)=\gamma(t_1),\ \zeta(t_2)=\gamma(t_2)\big\},
\end{align*}
so that $\Omega M=\Omega_{0,1}$. We denote by $E_{t_1,t_2}:\Omega_{t_1,t_2}\to\R$ the energy functional over this space, that is
\begin{align*}
E_{t_1,t_2}(\zeta) = \int_{t_1}^{t_2} g(\dot\zeta(t),\dot\zeta(t))\,\diff t.
\end{align*}
Consider three values $t_1<t_2<t_3$. Notice that  $\Tan_\gamma\Omega_{t_1,t_2}$ and $\Tan_\gamma\Omega_{t_2,t_3}$ can be seen as the vector subspaces of $\Tan_\gamma\Omega_{t_1,t_3}$ given by those vector fields with support in $[t_1,t_2]$ and $[t_2,t_3]$ respectively, and we have
\begin{align*}
\diff^2E_{t_1,t_3}(\gamma)[\xi,\eta]=0,
\qquad
\forall \xi\in\Tan_\gamma\Omega_{t_1,t_2},\ \eta\in\Tan_\gamma\Omega_{t_2,t_3}.
\end{align*}
This readily implies 
\begin{align*}
\ind(E_{t_1,t_3},\gamma) & \geq \sum_{i=1,2} \ind(E_{t_i,t_{i+1}},\gamma),\\
\ind(E_{t_1,t_3},\gamma)+\nul(E_{t_1,t_3},\gamma) & \geq \sum_{i=1,2} \Big( \ind(E_{t_i,t_{i+1}},\gamma)+\nul(E_{t_i,t_{i+1}},\gamma) \Big).
\end{align*}
Moreover, since $\gamma$ is 1-periodic, we have 
\begin{align*}
 \ind(E_{t_1,t_2},\gamma)=\ind(E_{t_1+k,t_2+k},\gamma),
 \qquad
 \forall k\in\Z,\ t_1<t_2.
\end{align*}
This, together with 
\begin{align*}
 \ind_\Omega(\gamma) & = \ind(E_{0,1},\gamma),\\
 \ind_\Omega(\gamma^m) & = \ind(E_{0,m},\gamma),\\
 \ind_\Omega(\gamma)+\nul_\Omega(\gamma) & = \ind(E_{0,1},\gamma)+\nul(E_{0,1},\gamma),\\ 
 \ind_\Omega(\gamma^m)+\nul_\Omega(\gamma^m) & = \ind(E_{0,m},\gamma)+\nul(E_{0,m},\gamma), 
\end{align*}
implies our desired inequalities.
\end{proof}

\section{The case of positive average index}
\label{s:avind_positive}

\subsection{Construction of local homology generators}
\label{ss:construction}

Let $(M,g)$ be a complete Riemannian manifold of dimension at least 2 without close conjugate points at infinity. As usual, we denote by $E:\Lambda M\to[0,\infty)$ its energy function. Since we will be looking for infinitely many closed geodesics, we can assume that $\crit(E)\cap E^{-1}(0,\infty)$ is a collection of isolated critical circles.
Let $\gamma\in\crit(E)$ be a closed geodesic satisfying the assumptions of Theorem~\ref{t:main}, and having positive average index
\begin{align}
\label{e:positive_average_index}
 \avind(\gamma)>0.
\end{align}
In this case, the proof of Theorem~\ref{t:main} which we are going to present combines the arguments of Hingston \cite{Hingston:1993ou} with the penalization trick of Benci and Giannoni (see Section~\ref{s:Benci_Giannoni}).

We set $c:=E(\gamma)>0$. We choose $k\in\N$ large enough so that $\gamma\in\Lambda_kM$, and we introduce the evaluation map
\begin{align*}
\ev:\Lambda_kM\to M,
\qquad
\ev(\zeta)=\zeta(0).
\end{align*}
This map is a submersion. Indeed, every closed broken geodesic $\zeta\in\Lambda_k$ is parametrized by the $k$-tuple $(\zeta(0),\zeta(\tfrac1k),...,\zeta(\tfrac{k-1}k))$, and the evaluation map $\ev$ is simply the projection onto the first coordinate.

Let $\Sigma_0\subset M$ be an embedded hypersurface intersecting the closed geodesic $\gamma$  only at the point 
$q_*:=\gamma(0)$,
and such that this intersection is transverse. The preimage 
\[\Sigma:=\ev^{-1}(\Sigma_0)\subset\Lambda_kM\] 
is a hypersurface intersecting the critical circle $S^1\cdot\gamma$ only at $\gamma$. This intersection is transverse, for the vector field $\dot\gamma\in\Tan_\gamma(S^1\cdot\gamma)$ satisfies
\begin{align*}
\diff\ev(\gamma)\dot\gamma = \dot\gamma(0)\not\in \Tan_{q_*}\Sigma_0.
\end{align*}
Up to shrinking $\Sigma_0$ around $q_*$ and $\Sigma$ around $\gamma$, we can assume that any closed broken geodesic $\zeta\in\Sigma$ intersects the hypersurface $\Sigma_0$ only at the point $\zeta(0)$.

By assumptions~(i-ii) in Theorem~\ref{t:main}, the closed geodesic $\gamma$ has non-trivial local homology $\Loc_d(E,\gamma)$ in degree
\begin{align*}
d=\avind(\gamma) + \dim(M)-1 = \ind(\gamma) + \nul(\gamma)>1.
\end{align*}
Lemma~\ref{l:local_homology_and_shift}(i) implies that the inclusion induces the local homology isomorphism \[\Loc_d(E|_\Sigma,\gamma)\ttoup_{\cong}^{\incl}\Loc_d(E,\gamma).\]
Notice that $\gamma$ is an isolated critical point of the functional $E|_\Sigma$ with non-zero local homology in maximal degree $d=\ind(\gamma) + \nul(\gamma)$. By Lemma~\ref{l:generator_local_homology}, the local homology $\Loc_d(E|_\Sigma,\gamma)$ is generated by any embedded $d$-dimensional closed ball $B^d\subset\Sigma$ such that $\gamma$ belongs to the interior of $B^d$ and $E|_{B^d\setminus\{\gamma\}}<c$. We choose such $B^d$ so that its tangent space at $\gamma$ is given by $\Tan_\gamma B^d=\E_-\oplus \E_0$, where $\E_-$ and $\E_0$ are the negative eigenspaces and the kernel of the Hessian $\diff^2E|_{\Sigma}(\gamma)$ respectively.

We denote by $\Omega M$ the based loop space~\eqref{e:based_loop_space}, where the base-point is $q_*=\gamma(0)$. Assumption~(ii) in Theorem~\ref{t:main} forces an identity between the Morse indices of $\gamma$ in $\Omega M$ and in $\Lambda M$.

\begin{lem}\label{l:equality_indices}
$\ind_\Omega(\gamma)+\nul_\Omega(\gamma)=\ind(\gamma)+\nul(\gamma)-(\dim(M)-1)$.
\end{lem}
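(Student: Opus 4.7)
The plan is to combine the two‑sided inequality~\eqref{e:Ball_Thor_Zill} with the iteration behavior under $m\mapsto\gamma^m$. The lower bound is immediate: the right half of~\eqref{e:Ball_Thor_Zill} gives
\[
\ind_\Omega(\gamma)+\nul_\Omega(\gamma)\ge \ind(\gamma)+\nul(\gamma)-(\dim(M)-1),
\]
so all the work lies in proving the matching upper bound.

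For the upper bound, I would iterate and use assumption (ii). For every $m\in\N$, Lemma~\ref{l:iteration_Omega_index} yields
\[
m\bigl(\ind_\Omega(\gamma)+\nul_\Omega(\gamma)\bigr)\le \ind_\Omega(\gamma^m)+\nul_\Omega(\gamma^m),
\]
the left half of~\eqref{e:Ball_Thor_Zill} applied to $\gamma^m$ gives
\[
\ind_\Omega(\gamma^m)+\nul_\Omega(\gamma^m)\le \ind(\gamma^m)+\nul(\gamma^m),
\]
and the iteration inequality~\eqref{e:iteration_inequality_2} gives
\[
\ind(\gamma^m)+\nul(\gamma^m)\le m\,\avind(\gamma)+\dim(M)-1.
\]
Chaining these three estimates, dividing by $m$ and letting $m\to\infty$, we obtain
\[
\ind_\Omega(\gamma)+\nul_\Omega(\gamma)\le \avind(\gamma).
\]

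To finish I would use condition (ii) of Theorem~\ref{t:main} with $m=1$, which reads
\[
\ind(\gamma)+\nul(\gamma)=\avind(\gamma)+\dim(M)-1,
\]
equivalently $\avind(\gamma)=\ind(\gamma)+\nul(\gamma)-(\dim(M)-1)$. Substituting into the previous inequality delivers
\[
\ind_\Omega(\gamma)+\nul_\Omega(\gamma)\le \ind(\gamma)+\nul(\gamma)-(\dim(M)-1),
\]
which combined with the lower bound yields the desired equality.

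There is really no obstacle here; the only subtlety is that the upper bound must be extracted from the asymptotic behavior of the iterates rather than from a single inequality at $m=1$, which is why assumption~(ii) (forcing maximal growth of $\ind+\nul$ under iteration) is essential — it prevents the linear coefficient $\avind(\gamma)$ from being strictly smaller than $\ind(\gamma)+\nul(\gamma)-(\dim(M)-1)$.
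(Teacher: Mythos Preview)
Your proof is correct and follows essentially the same approach as the paper: bound $\ind_\Omega(\gamma)+\nul_\Omega(\gamma)$ from above via Lemma~\ref{l:iteration_Omega_index} and~\eqref{e:Ball_Thor_Zill} applied to the iterates, divide by $m$, and pass to the limit. The only cosmetic difference is that the paper invokes the equality in assumption~(ii) for $m\in\PP$ directly in the chain, whereas you use the general inequality~\eqref{e:iteration_inequality_2} for all $m$ and invoke~(ii) only at $m=1$ at the end; both routes are equivalent.
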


\begin{proof}
By assumption~(ii) of Theorem~\ref{t:main}, for all $m\in\PP$ we have
\begin{align}
\label{e:index_growth_alternative}
\ind(\gamma^m) + \nul(\gamma^m)
=
m\,(\ind(\gamma)+\nul(\gamma)) - (m-1)(\dim(M)-1).
\end{align}
Since the based loop space $\Omega M$ is a subspace of the free loop space $\Lambda M$, we have
$\ind_\Omega(\gamma^m)\leq\ind(\gamma^m)$ for all $m\in\N$. This, together with Lemma~\ref{l:iteration_Omega_index} and Equations~\eqref{e:Ball_Thor_Zill} and~\eqref{e:index_growth_alternative}, imply that for all $m\in\PP$ we have
\begin{align*}
 \ind_\Omega(\gamma)+\nul_\Omega(\gamma)
 & \leq
 \tfrac{1}{m}\big( \ind_\Omega(\gamma^m) +\nul_\Omega(\gamma^m) \big)\\
 & \leq
 \tfrac{1}{m} \big( \ind(\gamma^m) + \nul(\gamma^m) \big)\\
 & =
 \ind(\gamma) + \nul(\gamma) - \tfrac{m-1}{m}(\dim(M)-1).
\end{align*}
Since $\PP$ is an infinite set of primes, by taking the limit for $m\to\infty, m\in\PP$ we obtain
\begin{align*}
  \ind_\Omega(\gamma)+\nul_\Omega(\gamma)  \leq
 \ind(\gamma) + \nul(\gamma)  - (\dim(M)-1).
\end{align*}
By~\eqref{e:Ball_Thor_Zill}, the opposite inequality holds as well.
\end{proof}

\begin{lem}\label{l:ev_submersion}
$\diff\ev(\gamma)(\Tan_\gamma B^d)=\Tan_{q_*}\Sigma_0$.
\end{lem}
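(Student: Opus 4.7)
The inclusion $\diff\ev(\gamma)(\Tan_\gamma B^d)\subseteq \Tan_{q_*}\Sigma_0$ is automatic, since $B^d\subset \Sigma=\ev^{-1}(\Sigma_0)$. The plan is therefore to prove the opposite inclusion by a dimension count. Since $\ev$ is a submersion, $\diff\ev(\gamma)$ maps $\Tan_\gamma\Sigma$ onto $\Tan_{q_*}\Sigma_0$, and its kernel is precisely $V_0:=\Tan_\gamma\Omega_k M$, where $\Omega_kM:=\Omega M\cap\Lambda_kM$ denotes the space of based closed broken geodesics through $q_*$. Thus $V_0\subset\Tan_\gamma\Sigma$ has codimension $\dim(M)-1$. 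For the chosen $k$, the Hessian of $E|_{\Omega_k M}$ at $\gamma$ has index and nullity $\ind_\Omega(\gamma)$ and $\nul_\Omega(\gamma)$, which by Lemma~\ref{l:equality_indices} satisfy $\ind_\Omega(\gamma)+\nul_\Omega(\gamma)=d-(\dim(M)-1)$.

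The heart of the argument is to show that $\dim\big((\E_-\oplus \E_0)\cap V_0\big)=d-(\dim(M)-1)$. I would establish this by squeezing the dimension between two matching bounds. Writing $H:=\diff^2 E|_{\Sigma}(\gamma)$, the bilinear form $H$ is non-positive on $\E_-\oplus \E_0$ by construction, hence its restriction to the subspace $(\E_-\oplus \E_0)\cap V_0$ of $V_0$ is non-positive, so by the maximality of non-positive subspaces
\begin{equation*}
\dim\big((\E_-\oplus \E_0)\cap V_0\big)\leq \ind_\Omega(\gamma)+\nul_\Omega(\gamma)=d-(\dim(M)-1).
\end{equation*}
Conversely, the Grassmann formula inside $\Tan_\gamma\Sigma$ gives
\begin{equation*}
\dim\big((\E_-\oplus\E_0)\cap V_0\big)\geq d+\dim V_0-\dim\Tan_\gamma\Sigma=d-(\dim(M)-1).
\end{equation*}
The two bounds coincide, so equality holds everywhere.

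Equality in the Grassmann inequality forces $(\E_-\oplus\E_0)+V_0=\Tan_\gamma\Sigma$. Applying the surjection $\diff\ev(\gamma):\Tan_\gamma\Sigma\twoheadrightarrow\Tan_{q_*}\Sigma_0$ and using $V_0\subset\ker\diff\ev(\gamma)$, we conclude $\diff\ev(\gamma)(\E_-\oplus\E_0)=\Tan_{q_*}\Sigma_0$, which is the claim since $\Tan_\gamma B^d=\E_-\oplus \E_0$. The only delicate point is the identification of the index and nullity of $E|_{\Omega_k M}$ at $\gamma$ with the $\Omega$-indices $\ind_\Omega(\gamma)$, $\nul_\Omega(\gamma)$, but this is a standard feature of the broken geodesic approximation valid for $k$ sufficiently large; granted that, the rest of the argument is purely linear-algebraic and invokes Lemma~\ref{l:equality_indices} only once, to ensure the two dimension bounds match.
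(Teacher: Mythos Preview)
Your proof is correct and follows essentially the same idea as the paper: bound the dimension of $(\E_-\oplus\E_0)\cap V_0$ from above by $\ind_\Omega(\gamma)+\nul_\Omega(\gamma)$ using that the Hessian is non-positive there, then invoke Lemma~\ref{l:equality_indices}. The paper phrases the upper bound in two pieces (bounding $\dim(\E_0\cap\Tan_\gamma\Omega M)$ by $\nul_\Omega(\gamma)$ and the quotient by $\ind_\Omega(\gamma)$), whereas you bundle it into the single observation that a non-positive subspace has dimension at most index plus nullity; both are fine.

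The only real difference is that you take a small detour: you pair the upper bound with a Grassmann lower bound to get exact equality, then deduce $(\E_-\oplus\E_0)+V_0=\Tan_\gamma\Sigma$ and apply the surjection. The paper instead goes straight from the upper bound on the kernel to surjectivity via rank--nullity for $\diff\ev(\gamma)|_{\Tan_\gamma B^d}$, which makes the Grassmann step unnecessary. Your route works, but it is slightly longer than needed.
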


\begin{proof}
Since $\diff\ev(\gamma)\xi=\xi(0)$ and $\ev(B^d)\subset\Sigma_0$, we have that
\[\ker (\diff\ev(\gamma)|_{\Tan_\gamma B^d}) = \Tan_\gamma B^d \cap \Tan_\gamma\Omega M.\]
Therefore, in order to prove the lemma it is enough to show that 
\begin{align*}
\dim(\Tan_\gamma B^d) - \dim(\Tan_\gamma B^d\cap\Tan_\gamma\Omega) \geq \dim(\Tan_{q_*}\Sigma_0),
\end{align*}
that is,
\begin{align*}
\dim(\Tan_\gamma B^d\cap\Tan_\gamma\Omega) \leq \ind(\gamma)+\nul(\gamma) - (\dim(M)-1).
\end{align*}
Notice that $\E_0\cap\Tan_\gamma\Omega M$ is contained in the kernel of the Hessian $\diff^2E|_{\Omega M}(\gamma)$, and therefore 
\begin{align}
\label{e:dim1}
\dim(\E_0\cap\Tan_\gamma\Omega M)\leq\nul_\Omega(\gamma). 
\end{align}
Since $\Tan_\gamma B^d=\E_-\oplus \E_0$, we have that 
\begin{align*}
 \diff E(\gamma)[\xi,\xi]<0,
 \qquad
 \forall\xi\in (\Tan_\gamma B^d\setminus\E_0)\cap\Tan_\gamma\Omega M,
\end{align*}
and therefore
\begin{align}\label{e:dim2}
\dim\left(\frac{\Tan_\gamma B^d\cap\Tan_\gamma\Omega M}{\E_0\cap\Tan_\gamma\Omega M}\right)\leq\ind_\Omega(\gamma).
\end{align}
The inequalities~\eqref{e:dim1} and~\eqref{e:dim2}, together with Lemma~\ref{l:equality_indices}, imply the desired inequality
\[
\dim(\Tan_\gamma B^d\cap\Tan_\gamma\Omega M)
\leq
\ind_\Omega(\gamma) + \nul_\Omega(\gamma)
=
\ind(\gamma)+\nul(\gamma)-(\dim(M)-1).
\qedhere
\]
\end{proof}

Lemma~\ref{l:ev_submersion} implies that the restriction $\ev|_{B^d}:B^d\to\Sigma_0$ is a submersion on a neighborhood of $\gamma$. By the total rank theorem, up to shrinking $\Sigma_0$ around $q_*=\gamma(0)$ and $B^d$ around $\gamma$, there exists an open ball $B'$ of dimension $d-(\dim(M)-1)$ and a diffeomorphism $\phi:\Sigma_0\times B'\to B^d$ such that 
\begin{align*}
 \ev\circ\phi(q,p)=q,
\qquad
 \forall (q,p)\in \Sigma_0\times B'.
\end{align*}
We denote by $p_*\in B'$ the point such that $\phi(q_*,p_*)=\gamma$.

For each $m\in\PP$, we consider the hypersurface
\begin{align*}
 \Sigma_m:=\big\{ \zeta\in\Lambda_{mk}M\ \big|\ \zeta(0)\in\Sigma_0\big\}\subset \Lambda_{mk}M,
\end{align*}
which intersects the critical circle $S^1\cdot\gamma^m$ only at $\gamma^m$ with a transverse intersection. We define the map
\begin{align*}
\Phi_m: \Sigma_0\times \Sigma_0\times (B')^{\times m}\to \Sigma_{m}\subset\Lambda_{mk}M
\end{align*}
by $\Phi_m(q,q',p_0,p_1,...,p_{m-1})=\zeta$, where $\zeta$ is the closed broken geodesic given by
\begin{align*}
 \zeta(\tfrac{j+i}{mk})
 =
 \left\{
   \begin{array}{lll}
    \phi(q,p_j)(\tfrac ik), &  & j=0,...,\lfloor m/2\rfloor-1,\ \ i=0,...,k-1, \\ 
    \phi(q',p_j)(\tfrac ik), &  & j=\lfloor m/2\rfloor,...,m-1,\ \ i=0,...,k-1.
  \end{array}
 \right.
\end{align*}
In other words, if we consider the diffeomorphism onto its image 
\begin{align*}
 \iota_m:\Lambda_{mk}M\to \underbrace{M\times...\times M}_{\times mk},
 \qquad
 \iota_m(\zeta)=(\zeta(0),\zeta(\tfrac 1{mk}),...,\zeta(\tfrac{mk-1}{mk})),
\end{align*}
we have
\begin{align*}
&\iota_m\circ\Phi_m(q,q',p_0,p_1,...,p_{m-1})\\
&\quad=(\iota_1\circ\phi(q,p_0),...,\iota_1\circ\phi(q,p_{\lfloor m/2\rfloor-1}),\iota_1\circ\phi(q',p_{\lfloor m/2\rfloor}),...\iota_1\circ\phi(q',p_{m-1})).
\end{align*}
Since $\iota_1\circ\phi$ is an embedding, $\Phi_m$ is an embedding as well. The energy of the closed broken geodesics in the image of $\Phi_m$ is given by
\begin{equation}
\label{e:energy_Phi_m}
\begin{split}
 m^{-1}\,E\circ \Phi_m(q,q',p_1,...,p_{m-1}) = & \sum_{i=0}^{\lfloor m/2\rfloor-1}E\circ\phi(q,p_i) \\  
 & 
 +\sum_{i=\lfloor m/2\rfloor}^{m-1}E\circ\phi(q',p_i) \\
 & + 
f(q,p_{\lfloor m/2\rfloor-1},q',p_{\lfloor m/2\rfloor}), 
\end{split}
\end{equation}
where $f:\Sigma_0\times B'\times \Sigma_0\times B'\to\R$ is the smooth function 
\begin{align*}
f(q,p,q',p')
=
&\, k
\Big(\dist(\phi(q,p)(\tfrac{k-1}{k}),q')^2 + \dist(\phi(q',p')(\tfrac{k-1}{k}),q)^2 \\
&
\,- \dist(\phi(q,p)(\tfrac{k-1}{k}),q)^2
- \dist(\phi(q',p')(\tfrac{k-1}{k}),q')^2
\Big).
\end{align*}
Notice that 
\begin{align}
\label{e:control_f}
|f(q,p,q',p')|\leq \rho(\dist(q,q')),
\end{align}
where $\rho:[0,\infty)\to[0,\infty)$ is a continuous and monotone increasing function such that $\rho(0)=0$.
We set 
\[
U_m:=\Sigma_0\times \underbrace{B'\times...\times B'}_{\times m},
\] 
and consider the restriction of $\Phi_m$ to the subset where $q=q'$, that is, the embedding
\begin{gather*}
\phi_m:U_m\hookrightarrow\Sigma_m\subset \Lambda_{mk}M,
\\
\phi_m(q,p_0,p_1,...,p_{m-1}):=\Phi_m(q,q,p_0,p_1,...,p_{m-1}).
\end{gather*}

\begin{lem}
The map $\phi_m$ induces the homology isomorphism
\begin{align*}
 (\phi_m)_*: \Hom_{d_m}(U_m,\partial U_m) \toup^{\cong} \Loc_{d_m}(\gamma^m),
\end{align*}
where $d_m:=\ind(\gamma^m)+\nul(\gamma^m)$.
\end{lem}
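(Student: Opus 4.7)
The plan is to verify that $\phi_m(\overline{U_m})$ is a smoothly embedded compact $d_m$-ball in $\Lambda_{mk}M$ fulfilling the hypotheses of Lemma~\ref{l:generator_local_homology}. Once this is done, the fundamental class of that ball, namely $(\phi_m)_*[U_m,\partial U_m]$, will be a generator of $\Loc_{d_m}(E,\gamma^m)$; since both source and target are one-dimensional $\Q$-vector spaces (the right-hand side by assumption~(ii) and the discussion after Theorem~\ref{t:main}, the left-hand side since $U_m$ is a closed ball), the induced map will then be an isomorphism.

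Three properties must be checked. First, the dimension count: by assumption~(i), $\dim B' = d - (\dim M - 1) = \avind(\gamma)$, so $\dim U_m = (\dim M - 1) + m\,\avind(\gamma)$, which equals $d_m$ by assumption~(ii). Second, $\phi_m$ is a smooth embedding: in the chart $\iota_m:\Lambda_{mk}M\hookrightarrow M^{mk}$, it reads
\[
\iota_m\circ\phi_m(q,p_0,\ldots,p_{m-1}) = \big(\iota_1\circ\phi(q,p_0),\ldots,\iota_1\circ\phi(q,p_{m-1})\big),
\]
and since $\iota_1\circ\phi$ is already an embedding of $\Sigma_0\times B'$ into $M^{k}$, a direct check shows that this total map is injective with injective differential. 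Moreover $\phi_m(U_m)\subset \Sigma_m$, and the hypersurface $\Sigma_m$ meets $S^1\cdot\gamma^m$ transversely at the single point $\gamma^m = \phi_m(q_*,p_*,\ldots,p_*)$, which lies in the interior of $U_m$; this provides the required non-tangent intersection.

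The key third step is the strict energy inequality. Specializing~\eqref{e:energy_Phi_m} to $q=q'$, the four distance terms defining $f(q,p_{\lfloor m/2\rfloor-1},q,p_{\lfloor m/2\rfloor})$ cancel pairwise, and so
\[
\tfrac{1}{m}\,E\circ\phi_m(q,p_0,\ldots,p_{m-1}) = \sum_{j=0}^{m-1} E\circ\phi(q,p_j).
\]
By construction $B^d = \phi(\Sigma_0\times B')$ satisfies $E|_{B^d\setminus\{\gamma\}}<c$, so each summand is $\leq c$, with equality iff $(q,p_j)=(q_*,p_*)$. Since $E(\gamma^m)=m^2 c$, we conclude $E\circ\phi_m \leq E(\gamma^m)$ with equality only at the interior point $(q_*,p_*,\ldots,p_*)$; in particular $\phi_m(\partial U_m)\subset\{E<E(\gamma^m)\}$. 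Applying Lemma~\ref{l:generator_local_homology} to the compact embedded $d_m$-ball $\phi_m(\overline{U_m})$ then identifies $(\phi_m)_*[U_m,\partial U_m]$ with a generator of $\Loc_{d_m}(E,\gamma^m)$. The only delicate point in the argument is the cancellation of $f$ at $q=q'$: without it the energy would not split as a plain sum over the $m$ blocks, and the multiplicative estimate forcing strict subcriticality of $E\circ\phi_m$ away from the central point would fail.
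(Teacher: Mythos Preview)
Your proof is correct and follows essentially the same route as the paper: compute $\dim U_m=d_m$, verify the energy splitting $E\circ\phi_m(q,\pp)=m\sum_j E\circ\phi(q,p_j)$ so that $\gamma^m$ is the unique maximum on the embedded ball, and then invoke Lemma~\ref{l:generator_local_homology}. One small remark: your justification that the target $\Loc_{d_m}(E,\gamma^m)$ is one-dimensional by ``assumption~(ii) and the discussion after Theorem~\ref{t:main}'' is mildly circular, since that discussion explicitly defers to Section~\ref{ss:construction} (i.e.\ to this very lemma); however this is harmless, because Lemma~\ref{l:generator_local_homology} (via the cited Morse-theoretic fact) simultaneously yields the non-triviality and the generation once such a ball exists---which is exactly how the paper applies it as well.
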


\begin{proof}
Notice that
\begin{align*}
\phi_m(q,p_0,p_1,...,p_{m-1})(\tfrac{i+t}m)
=
\phi(q,p_i)(t),\qquad
\forall i=0,...,m-1,\ t\in[0,1],
\end{align*}
that is, $\phi_m(q,p_0,p_1,...,p_{m-1})$ is the  concatenation of the loops 
\[\phi(q,p_0)*\phi(q,p_1)*...*\phi_m(q,p_{m-1}).\] Therefore
\begin{align*}
E\circ \phi_m(q,p_0,p_1,...,p_{m-1}) = m\sum_{i=0}^{m-1}E\circ\phi(q,p_i). 
\end{align*}
The dimension of the domain $U_m$ of the embedding $\phi_m$ is
\begin{align*}
 \dim(U_m)
 & =
 \dim(M)-1+m\big(d-(\dim(M)-1)\big)\\
 & = 
 \dim(M)-1 + m\,\avind(\gamma)\\
 & = 
 \ind(\gamma^m)+\nul(\gamma^m)\\
 & = d_m.
\end{align*}
We set 
$\qq_*:=(q_*,p_*,p_*,...,p_*)\in \Sigma_0\times (B')^{\times m}$. Notice that $\phi_m(\qq_*)=\gamma^m$, and $E\circ \phi_m(\qq)<E(\gamma^m)$ for all $\qq\in\Sigma_0\times (B')^{\times m}$ with $\qq\neq\qq_*$. Therefore, by Lemma~\ref{l:generator_local_homology}, the map $\phi_m$ is a generator of the local homology $\Loc_{*}(\gamma^m)=\Loc_{d_m}(\gamma^m)$, that is, it induces a homology isomorphism as claimed.
\end{proof}

\subsection{A cycle deformation}
We now build, for each prime number $m\in\PP$ large enough, an explicit deformation of the relative cycle $\phi_m$ that will push it inside the sublevel set $\{E<m^2c\}$. If $M$ were a closed manifold, this would be enough to infer the existence of a closed geodesic with energy larger than $m^2c$ and smaller than or equal to the maximum of the energy along the deformation. In our non-compact setting, more work is needed in order to derive such a conclusion, and we will take care of it in Subsection~\ref{ss:local_to_global}.

\begin{lem}
\label{l:homotopy_h_ms}
For each $\epsilon>0$ small enough there exists $\overline m_\epsilon>0$ and, for each $m\in\PP$ with $m\geq\overline m_\epsilon$, a homotopy
\begin{align}
\label{e:homotopy_h_ms}
 \phi_{m,s}: (U_m,\partial U_m) 
 \to 
 (\{E<m^2c+m\epsilon\},\{E<m^2c\}),
 \qquad s\in[0,1],
\end{align}
such that 
\begin{itemize}
\item[(i)] $\phi_{m,0}=\phi_m$,
\item[(ii)] $\phi_{m,1}(U_m)\subset \{E<m^2c\}$.
\end{itemize}
\end{lem}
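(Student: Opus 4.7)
The plan is to adapt Hingston's cut-and-shift deformation. I set
\[
\phi_{m,s}(q,p_0,\ldots,p_{m-1}) \ := \ \Phi_m(q,\psi_s(q),p_0,\ldots,p_{m-1}),
\]
where $\psi_s\colon\Sigma_0\to\Sigma_0$ is the time-$s\eta$ flow of a smooth vector field $V$ on $\Sigma_0$ compactly supported in its interior and with $V(q_*)\neq 0$, for a parameter $\eta=\eta(\epsilon)>0$ to be chosen. Then $\psi_0=\mathrm{id}$ (giving property~(i)), each $\psi_s$ is a diffeomorphism of $\Sigma_0$ equal to the identity near $\partial\Sigma_0$, and $\psi_s(q_*)\neq q_*$ for $s\in(0,1]$. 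Using~\eqref{e:energy_Phi_m} I rewrite
\[
E\circ\phi_{m,s}(\qq) \ = \ m^2c \ - \ m\bigl(A(\qq)+B_s(\qq)-f_s(\qq)\bigr),
\]
where $A(\qq):=\lfloor m/2\rfloor c-\sum_{i<\lfloor m/2\rfloor}E(\phi(q,p_i))\geq 0$ and $B_s(\qq):=(m-\lfloor m/2\rfloor)c-\sum_{i\geq\lfloor m/2\rfloor}E(\phi(\psi_s(q),p_i))\geq 0$ are the energy deficits of the two halves of the loop, and $f_s:=f(q,p_{\lfloor m/2\rfloor-1},\psi_s(q),p_{\lfloor m/2\rfloor})$ satisfies $|f_s|\leq\rho(\eta)$ by~\eqref{e:control_f}. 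Choosing $\eta$ so that $\rho(\eta)<\epsilon$, the codomain bound $E\circ\phi_{m,s}(\qq)<m^2c+m\epsilon$ is immediate from $A,B_s\geq 0$.

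The crux of the argument is to verify the strict inequality $A(\qq)+B_s(\qq)>f_s(\qq)$ on $U_m$ at $s=1$ (which yields~(ii)) and on $\partial U_m$ for every $s$. The vanishing $A=0$ forces $q=q_*$ with $p_i=p_*$ for $i<\lfloor m/2\rfloor$, and $B_s=0$ forces $\psi_s(q)=q_*$ with $p_i=p_*$ for $i\geq\lfloor m/2\rfloor$; at $s=1$ these are incompatible since $\psi_1(q_*)\neq q_*$. Quantitatively, let $L$ denote a Lipschitz constant of $\psi_1$ and set $\eta_*:=\dist(q_*,\psi_1(q_*))>0$. For $\dist(q,q_*)\leq \eta_*/(2L)$ one has $\dist(\psi_1(q),q_*)\geq\eta_*/2$, so $B_1(\qq)\geq\lfloor m/2\rfloor\,\delta_*(\eta_*/2)$, with
\[
\delta_*(r) \ := \ \min\bigl\{c-E(\phi(q,p))\ \big|\ \dist((q,p),(q_*,p_*))\geq r,\ (q,p)\in\overline{\Sigma_0}\times\overline{B'}\bigr\}\ >\ 0
\]
positive by the strict maximum property of $E\circ\phi$ and compactness (after a suitable closure of the domains). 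In the complementary region $\dist(q,q_*)\geq\eta_*/(2L)$ the symmetric estimate $A(\qq)\geq\lfloor m/2\rfloor\,\delta_*(\eta_*/(2L))$ holds. Setting $\lambda:=\min\{\delta_*(\eta_*/2),\delta_*(\eta_*/(2L))\}>0$ gives the uniform bound $A+B_1\geq\lfloor m/2\rfloor\,\lambda$ on $U_m$, which dominates $|f_1|\leq\rho(\eta)<\epsilon$ as soon as $m\geq\overline m_\epsilon:=1+\lceil 2\epsilon/\lambda\rceil$.

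For the boundary analysis, on the face $\partial\Sigma_0\times(B')^{\times m}$ the map $\psi_s$ acts as the identity (because $V$ is compactly supported in the interior), so $\phi_{m,s}=\phi_m$ there and $E\circ\phi_m<m^2c$ is already known; on the face $\Sigma_0\times\partial((B')^{\times m})$ some $p_{i_0}\in\partial B'$ is bounded away from $p_*$, forcing $E(\phi(q,p_{i_0}))\leq c-\delta'$ for an $m$-independent constant $\delta'>0$ depending only on the radius of $B'$, whence $A+B_s\geq\delta'>\epsilon$ once $\epsilon$ is small enough, and therefore $\phi_{m,s}(\partial U_m)\subset\{E<m^2c\}$ throughout the homotopy. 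The main technical subtlety is the separation of the two mutually exclusive critical configurations at $q=q_*$ and at $q=\psi_1^{-1}(q_*)$; the Lipschitz bound on $\psi_1$ together with the small parameter $\eta$ ensures that whenever one of $A,B_1$ is small the other is bounded below by a constant multiple of $m$, and the remaining verification is a routine continuity-and-compactness argument.
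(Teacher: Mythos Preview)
Your argument is correct and follows essentially the same Hingston-style cut-and-shift strategy as the paper: both proofs define $\phi_{m,s}$ through the two-basepoint map $\Phi_m$, use the energy expression~\eqref{e:energy_Phi_m} together with the junction control~\eqref{e:control_f} to bound $E\circ\phi_{m,s}$, and exploit the strict maximum of $E\circ\phi$ at $(q_*,p_*)$ to produce a deficit of order $m$ at $s=1$. The only notable difference is that the paper moves \emph{both} basepoints via a homotopy $\theta_s=(\alpha_s,\beta_s)\colon\Sigma_0\to\Sigma_0\times\Sigma_0$ with $\min\{\dist(\alpha_1(q),q_*),\dist(\beta_1(q),q_*)\}\geq\delta_\epsilon$ for every $q$, so at $s=1$ every summand in~\eqref{e:energy_Phi_m} lies below $c-\mu_\epsilon$; you instead keep the first basepoint fixed at $q$ and flow only the second, which forces the extra dichotomy (Lipschitz) step to guarantee that at least one of $A$, $B_1$ is of order $m$. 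The symmetric version is marginally cleaner at $s=1$, but your asymmetric version is equally valid and arguably more economical in the choice of auxiliary data. One small notational point: the bound $|f_s|\leq\rho(\eta)$ should really be $|f_s|\leq\rho(\sup_{q,s}\dist(q,\psi_s(q)))$, so $\eta$ must be chosen so that the maximal displacement of the flow (not just the flow time) satisfies $\rho(\cdot)<\epsilon$.
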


\begin{proof}
Let $\epsilon_0>0$ be such that
\begin{align}
\label{e:energy_boundary}
\max E\circ\phi|_{\partial(\Sigma_0\times B')} & = c-\epsilon_0.
\end{align}
For each $\epsilon\in(0,\epsilon_0)$, there exist $\delta_\epsilon>0$ small enough so that
\begin{align}
\label{e:delta}
 \rho(3\delta_\epsilon) & <\epsilon,
\end{align}
where $\rho$ is the monotone function introduced in Equation~\eqref{e:control_f}. Since the composition $E\circ\phi$ has a unique global maximum at $(q_*,p_*)$, there exists $\mu_\epsilon>0$ such that
\begin{align}
\label{e:margin}
\max\big\{E\circ\phi(q,p)\ \big|\ 
(q,p)\in B^d\times B',\ \dist(q,q_*)\geq\delta_\epsilon
\big\}
=
c-\mu_\epsilon.
\end{align}
Notice that $\mu_\epsilon\to0$ as $\epsilon\to0$.

For $\epsilon\in(0,\epsilon_0)$, we fix an arbitrary continuous homotopy 
\[\theta_s=(\alpha_s,\beta_s):B^d\to B^d\times B^d,\qquad s\in[0,1],\]
such that $\alpha_0=\beta_0=\mathrm{id}$, $\alpha_s|_{\partial B^d}=\beta_s|_{\partial B^d}=\mathrm{id}$, and for all $q\in B^d$ we have
\begin{gather}
\label{e:homotopy_1}
 \max_{s\in[0,1]} \dist(\alpha_s(q),\beta_s(q))\leq 3\delta_\epsilon,\\
\label{e:homotopy_2}
 \min\big\{\dist(\alpha_1(q),q_*),\dist(\beta_1(q),q_*)\big\}\geq\delta_\epsilon,
\end{gather}
see Figure~\ref{f:homotopy}. By~\eqref{e:margin} and~\eqref{e:homotopy_2}, we have
\begin{align}
\label{e:estimate_below}
\max_{(q,p)\in B^d\times B'}\min\big\{E\circ\phi(\alpha_1(q),p),E\circ\phi(\beta_1(q),p)\big\}\leq c-\mu_\epsilon.
\end{align}

\begin{figure}
\begin{center}
\begin{footnotesize}
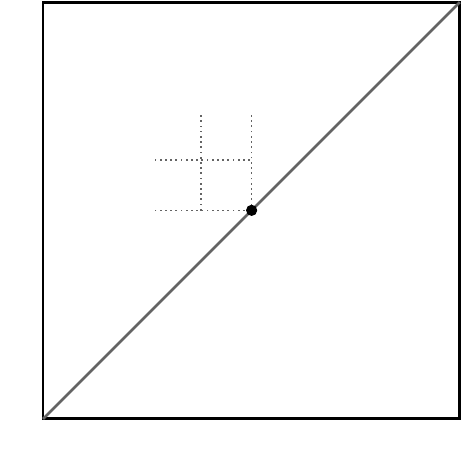 
\end{footnotesize}
\begin{small}
\caption{The homotopy $\theta_s$.}
\label{f:homotopy}
\end{small}
\end{center}
\end{figure}

For $m\in\PP$, we consider the cycle $\phi_m:U_m\to\Sigma_m\subset\Lambda_{km}M$ generating the local homology $\Loc_*(\gamma^m)$, and the map $\Phi_m: \Sigma_0\times \Sigma_0\times (B')^{\times m}\to \Lambda_{mk}M$ defined above. We define the homotopy
\begin{align*}
& \phi_{m,s}:U_m\to\Sigma_m\subset\Lambda_{km}M,
\qquad s\in[0,1],\\
& \phi_{m,s}(q,\pp):=
\Phi_m(\alpha_s(q),\beta_s(q),\pp),
\end{align*}
which clearly satisfies point (i) of the Lemma.

Fix an arbitrary $(q,\pp)\in\partial U_m$, so that $(q,p_i)\in\partial(\Sigma_0\times B')$ for some $i\in\{0,...,m-1\}$. By~\eqref{e:energy_boundary} we have
$E\circ\phi(q,p_i)\leq c-\epsilon_0$,
whereas $E\circ\phi(q,p_j)\leq c$ for all $j\neq i$.
By \eqref{e:energy_Phi_m}, \eqref{e:control_f}, \eqref{e:energy_boundary}, \eqref{e:delta}, and \eqref{e:homotopy_1}, for all $s\in[0,1]$ we have
\begin{align*}
 m^{-1}\,E\circ \phi_{m,s}(q,\pp) = & \sum_{i=0}^{\lfloor m/2\rfloor-1}E\circ\phi(\alpha_s(q),p_i) \\  
 & 
 +\sum_{i=\lfloor m/2\rfloor}^{m-1}E\circ\phi(\beta_s(q),p_i) \\
 & + 
f(\alpha_s(q),p_{\lfloor m/2\rfloor-1},\beta_s(q),p_{\lfloor m/2\rfloor})\\
\leq &\, mc-\epsilon_0 + \rho\big(\dist(\alpha_s(q),\beta_s(q))\big)\\
\leq &\, mc-\epsilon_0 + \rho(3\delta_\epsilon)\\
< &\, mc.
\end{align*}
Analogously, for all $(q,\pp)\in U_m$ we have
$ E\circ \phi_{m,s}(q,\pp) < m^2c+m\epsilon$.
These estimates show that each map $\phi_{m,s}$ in the homotopy has the form~\eqref{e:homotopy_h_ms}.

By~\eqref{e:estimate_below}, for all $(q,\pp)\in U_m$ we have
\begin{align*}
 m^{-1}\,E\circ \phi_{m,1}(q,\pp) = & \sum_{i=0}^{\lfloor m/2\rfloor-1}E\circ\phi(\alpha_1(q),p_i) \\  
 & 
 +\sum_{i=\lfloor m/2\rfloor}^{m-1}E\circ\phi(\beta_1(q),p_i) \\
 & + 
f(\alpha_1(q),p_{\lfloor m/2\rfloor-1},\beta_1(q),p_{\lfloor m/2\rfloor})\\
< &\, mc-(\lfloor m/2\rfloor-1)\mu_\epsilon + \rho(3\delta_\epsilon)\\
\leq &\, mc-(\lfloor m/2\rfloor-1)\mu_\epsilon + \epsilon,
\end{align*}
and therefore
\begin{align*}
 E\circ \phi_{m,1}(q,\pp) < m^2c,
 \qquad
 \forall m > \overline m_\epsilon:=2\left(\frac{\epsilon}{\mu_\epsilon}+1\right),
\end{align*}
which is exactly point (ii) of the lemma.
\end{proof}

\subsection{From local to global}\label{ss:local_to_global}
We now combine the local deformation built in the previous subsection with Benci-Giannoni's penalization method of Section~\ref{s:Benci_Giannoni} in order to detect other closed geodesics, and ultimately prove Theorem~\ref{t:main} in the case where $\avind(\gamma)>0$.

Since $\avind(\gamma)>0$, the iteration inequality \eqref{e:iteration_inequality_1} implies that 
\begin{align*}
d_m +1 > \dim(M),
\qquad
\forall m> \tilde m:=\frac{2\dim(M)}{\avind(\gamma)}.
\end{align*}
We consider the constants $\overline m_\epsilon>0$ of Lemma~\ref{l:homotopy_h_ms}, and we set
\begin{align*}
 \ell:=\sqrt c=\mathrm{length}(\gamma).
\end{align*}

\begin{lem}
\label{l:new_geodesics}
If all the closed geodesics of $(M,g)$ are contained in a compact subset of $M$, for each $\epsilon>0$ small enough and for each integer $m>  \max\{\overline m_\epsilon,\tilde m\}$ there exists a (possibly iterated) closed geodesic $\zeta_m\in\crit(E)$ such that
\begin{align}
\label{e:length_zeta_m}
m\ell < \mathrm{length}(\zeta_m) \leq m\ell\,\big(1+\tfrac{\epsilon}{m\ell^2}\big)^{1/2}\leq m\ell+\tfrac{\epsilon}{2\ell}.
\end{align}
\end{lem}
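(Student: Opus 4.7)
I would argue by contradiction, combining Benci--Giannoni's penalization of Section~\ref{s:Benci_Giannoni}, the injectivity of Corollary~\ref{c:incl_local_homologies}, and the deformation supplied by Lemma~\ref{l:homotopy_h_ms}. Assume no closed geodesic $\zeta_m$ of $(M,g)$ satisfies~\eqref{e:length_zeta_m}; since every closed geodesic parametrized on $[0,1]$ has constant speed, so that $(\mathrm{length})^2=E$, and since $\sqrt{1+x}\leq 1+x/2$, this amounts to $\mathcal G(m^2c,m^2c+m\epsilon)=\varnothing$. As the closed geodesics are confined to a relatively compact open set $W\subset M$ and the image of $\phi_{m,s}:U_m\times[0,1]\to\Lambda_{mk}M$ is a fixed compact subset of $\Lambda_{mk}M$, I choose $\alpha\in\N$ so large that $f_\alpha$ vanishes both on $\overline W$ and on $\ev(\phi_{m,s}(U_m\times[0,1]))$. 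Then $F_\alpha\equiv E$ on neighborhoods of $\Lambda W$ and of the image of the homotopy, so $\gamma^m$ has the same local homology for $F_\alpha$ as for $E$, and $\phi_{m,s}$ is still a homotopy in $\{F_\alpha<m^2c+m\epsilon\}$ of $\phi_m$ onto a cycle in $\{F_\alpha<m^2c\}$.

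Since $m>\tilde m$, the iteration inequality~\eqref{e:iteration_inequality_1} forces $d_m>\dim(M)$, and the Morse inequality of Section~\ref{s:Benci_Giannoni} applied in degree $d=d_m+1$ with $a=m^2c$ and $b=m^2c+m\epsilon$ yields
\[\rank \Hom_{d_m+1}\big(\{F_\alpha<m^2c+m\epsilon\},\{F_\alpha\leq m^2c\}\big)=0\]
under the contradictory hypothesis. To contradict this, I view the homotopy as a singular $(d_m{+}1)$-chain $\Phi_m:U_m\times[0,1]\to\{F_\alpha<m^2c+m\epsilon\}$ whose lateral and top faces lie in $\{F_\alpha<m^2c\}$ and whose bottom face is $\phi_m$; consequently $[\Phi_m]$ is a relative cycle in $(\{F_\alpha<m^2c+m\epsilon\},\{F_\alpha\leq m^2c\})$ whose image under the connecting homomorphism of the triple $\{F_\alpha<m^2c\}\subset\{F_\alpha\leq m^2c\}\subset\{F_\alpha<m^2c+m\epsilon\}$ equals $[\phi_m]\in \Hom_{d_m}(\{F_\alpha\leq m^2c\},\{F_\alpha<m^2c\})$.

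The crux is to verify $[\phi_m]\neq 0$ in this last group. By the preceding lemma, $\phi_m$ generates $\Loc_{d_m}(F_\alpha,\gamma^m)$, and since $m$ is a large prime, Corollary~\ref{c:incl_local_homologies} yields $[\phi_m]\neq 0$ in $\Loc_{d_m}(F_\alpha,S^1\cdot\gamma^m)$. To upgrade this to the absolute-relative group $\Hom_{d_m}(\{F_\alpha\leq m^2c\},\{F_\alpha<m^2c\})$, I excise small, mutually disjoint $S^1$-invariant neighborhoods of the (finitely many) critical circles of $F_\alpha$ at level $m^2c$; this produces a direct-sum decomposition in which $\Loc_{d_m}(F_\alpha,S^1\cdot\gamma^m)$ appears as a summand, hence injects. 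Combined with $\partial[\Phi_m]=[\phi_m]$, this forces $[\Phi_m]\neq 0$, contradicting the Morse inequality. Therefore $\mathcal G(m^2c,m^2c+m\epsilon)\neq\varnothing$, and any $\zeta_m$ in this set satisfies $\mathrm{length}(\zeta_m)=\sqrt{E(\zeta_m)}\in(m\ell,\sqrt{m^2c+m\epsilon}\,]$, yielding~\eqref{e:length_zeta_m}.

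The principal obstacle is the direct-summand argument of the third paragraph: although it is Morse-theoretic folklore, executing it requires an $S^1$-equivariant local deformation isolating $S^1\cdot\gamma^m$ from all other critical circles of $F_\alpha$ at the same energy $m^2c$ and identifying each local piece with the corresponding local homology; everything else is standard long-exact-sequence bookkeeping.
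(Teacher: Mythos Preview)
Your proof is correct and follows essentially the same route as the paper's. Both arguments pass through the penalized functions $F_\alpha$, the Morse inequality in degree $d_m+1$, Corollary~\ref{c:incl_local_homologies}, the injectivity of $\Loc_*(E,S^1\cdot\gamma^m)\hookrightarrow\Hom_*(\{F_\alpha\leq m^2c\},\{F_\alpha<m^2c\})$, and the long exact sequence of the triple $\{F_\alpha<m^2c\}\subset\{F_\alpha\leq m^2c\}\subset\{F_\alpha<m^2c+m\epsilon\}$. The only cosmetic differences are that you phrase the argument as a contradiction and that you make the connecting homomorphism explicit by building the prism chain $\Phi_m$, whereas the paper uses a commutative diagram to show directly that the inclusion $\Hom_{d_m}(\{F_\alpha\leq m^2c\},\{F_\alpha<m^2c\})\to\Hom_{d_m}(\{F_\alpha<m^2c+m\epsilon\},\{F_\alpha<m^2c\})$ has nontrivial kernel; these are equivalent. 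The ``direct-summand'' step you flag as the main obstacle is exactly the injectivity statement the paper also invokes without further comment.
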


\begin{proof}
By our assumption, there exists a relatively compact open subset $W\subset M$ such that
\begin{align*}
 \crit(E)\subset\Lambda W.
\end{align*}
Let us consider the sequence of penalized functions  $\{F_\alpha:\Lambda M\to\R\ |\ \alpha\in\N\}$ constructed in Section~\ref{s:Benci_Giannoni}, which have the following properties: there exists an exhaustion by compact subsets $\{K_\alpha\ |\ \alpha\in\N\}$, with  $K_\alpha\subset\mathrm{int}(K_{\alpha+1})$ and $\bigcup_{\alpha\in\N}K_\alpha=M$, such that $F_\alpha|_{\Lambda K_\alpha}=E|_{\Lambda K_\alpha}$, and for all $\alpha\in\N$ large enough $F_\alpha$ satisfies the Morse inequality
\begin{equation}
\label{e:final_Morse_inequality}
\rank \Hom_{d_m+1}(\{F_\alpha<m^2c+m\epsilon\},\{F_\alpha\leq m^2c\})
\leq 
\sum_{S^1\cdot\zeta} \rank\Loc_{d_m+1}(E,S^1\cdot\zeta),
\end{equation}
where the sum in the right-hand side ranges over all the critical circles 
\[S^1\cdot\zeta\subset\crit(E)\cap E^{-1}(m^2c,m^2c+m\epsilon).\]

\begin{rem}
\label{r:avind_zero}
If the closed geodesic $\gamma$ had zero average index $\avind(\gamma)=0$, we would have $d_m+1=d+1=\dim(M)$ for all $m\in\N$, and therefore we would not necessarily  have the Morse inequality~\eqref{e:final_Morse_inequality}. See Section~\ref{s:Benci_Giannoni}.
\hfill\qed
\end{rem}

Consider an integer $m>  \max\{\overline m_\epsilon,\tilde m\}$. We choose $\alpha\in\N$ large enough so that $\Lambda K_\alpha$ contains the image of the homotopy $\phi_{m,s}$, $s\in[0,1]$, provided by Lemma~\ref{l:homotopy_h_ms}. Therefore, in such a lemma we can replace the energy function $E$ with the penalized function $F_\alpha$. By Lemma~\ref{l:homotopy_h_ms}(ii),  the homomorphism
\begin{align*}
  (\phi_{m})_*=(\phi_{m,0})_*=(\phi_{m,1})_*:
  \Hom_*(U_m,\partial U_m) 
 \to 
 \Hom_*(\{F_\alpha<m^2c+m\epsilon\},\{F_\alpha<m^2c\})
\end{align*}
is the zero one, and we have the commutative diagram
\begin{align*}
\xymatrix{
\Hom_{d_m}(U_m,\partial U_m)
\ar[r]_{\cong}^{(\phi_m)_*}
\ar[ddr]_{(\phi_m)_*=0\ }
&
\Loc_{d_m}(E,\gamma)\,
\ar@{^{(}->}[r]^{\incl}_{(*)}
&
\Loc_{d_m}(E,S^1\cdot\gamma)
\ar[ddl]^{\incl}_{(**)}
\\\\
&
\Hom_{d_m}(\{F_\alpha<m^2c+m\epsilon\},\{F_\alpha<m^2c\})
&
} 
\end{align*}
Here, the fact that the homomorphism $(*)$ induced by the inclusion is injective follows from Corollary~\ref{c:incl_local_homologies}. The commutative diagram implies that the homomorphism $(**)$ is not injective. Since the inclusion induces an injective homomorphism 
\begin{align*}
\Loc_*(E,S^1\cdot\gamma^m) \eembup^{\incl} \Hom_*(\{F_\alpha\leq m^2c\},\{F_\alpha<m^2c\}),
\end{align*}
we actually have that the homomorphism 
\begin{align*}
 \Hom_{d_m}(\{F_\alpha\leq m^2c\},\{F_\alpha<m^2c\}) \ttoup^{\incl} \Hom_{d_m}(\{F_\alpha<m^2c+m\epsilon\},\{F_\alpha<m^2c\})
\end{align*}
is not injective. This, together with the long exact sequence of the triple 
\[\big(\{F_\alpha<m^2c+m\epsilon\},\{F_\alpha\leq m^2c\},\{F_\alpha<m^2c\}\big),\] 
implies that
\[\Hom_{d_m+1}(\{F_\alpha<m^2c+m\epsilon\},\{F_\alpha\leq m^2c\})\neq0.\] 
By the Morse inequality~\eqref{e:final_Morse_inequality}, there exists a (possibly iterated) closed geodesic $\zeta_m\in\crit(E)$ such that
\begin{gather}
\label{e:energy_zeta_m}
m^2c < E(\zeta_m) \leq m^2c+m\epsilon.
\end{gather}
The energy bounds~\eqref{e:energy_zeta_m} imply the length bounds~\eqref{e:length_zeta_m}.
\end{proof}

\begin{proof}[Proof of Theorem~\ref{t:main}, case $\avind(\gamma)>0$]
If no compact subset of $M$ contains all the closed geodesics of $(M,g)$, in particular there are infinitely many closed geodesics, and we are done. Therefore, it remains to consider the case where all the closed geodesics of $(M,g)$ are contained in a compact subset of $M$.

Let us fix a sequence of positive quantities $\epsilon_\alpha>0$ such that $\epsilon_\alpha\to0$ as $\alpha\to\infty$. We set 
\[m_\alpha:=\min\big\{ m\in\PP\ \big|\ m>\max\{\overline m_{\epsilon_\alpha},\tilde m,\lceil1/\alpha\rceil\} \big\},\] 
and we consider the closed geodesic $\zeta_{m_\alpha}$ provided by Lemma~\ref{l:new_geodesics}. In order to prove Theorem~\ref{t:main}, we are only left to show that the family of closed geodesics $\{\zeta_{m_\alpha}\ |\ \alpha\in\N\}$ is not made of iterates of finitely many closed geodesics. Actually, we will show that only finitely many iterates of any given closed geodesic of $(M,g)$ can belong to the family $\{\zeta_{m_\alpha}\ |\ \alpha\in\N\}$. 

Let us assume by contradiction that there exists a closed geodesic $\zeta:\R/\Z\to M$, an infinite subset $\K\subset\N$, and a function $\mu:\K\to\N$ such that $\zeta_{m_\alpha}=\zeta^{\mu(\alpha)}$ for all $\alpha\in\K$. The length bounds~\eqref{e:length_zeta_m} can be rewritten as 
\begin{align*}
 m_\alpha\ell < \mu(\alpha)\,\mathrm{length}(\zeta) \leq m_\alpha\ell+\tfrac{\epsilon_\alpha}{2\ell},
\end{align*}
and, since $\mu(\alpha)$ is a positive integer,
\begin{align}
\label{e:final_inequality}
\frac{m_\alpha\,\ell}{\mathrm{length}(\zeta)}
<
\mu(\alpha)
\leq
\left\lfloor 
\frac{m_\alpha\,\ell}{\mathrm{length}(\zeta)}
+
\frac{\epsilon_\alpha}{2\ell\,\mathrm{length}(\zeta)}
\right\rfloor.
\end{align}
However, if we choose $\alpha\in\K$ large enough, we have
\begin{align*}
\left\lfloor 
\frac{m_\alpha\,\ell}{\mathrm{length}(\zeta)}
+
\frac{\epsilon_\alpha}{2\ell\,\mathrm{length}(\zeta)}
\right\rfloor
=
\left\lfloor 
\frac{m_\alpha\,\ell}{\mathrm{length}(\zeta)}
\right\rfloor
\leq
\frac{m_\alpha\,\ell}{\mathrm{length}(\zeta)},
\end{align*}
which, together with~\eqref{e:final_inequality}, gives the contradiction
\[
\frac{m_\alpha\,\ell}{\mathrm{length}(\zeta)}
<
\mu(\alpha)
\leq
\frac{m_\alpha\,\ell}{\mathrm{length}(\zeta)}.
\qedhere
\]
\end{proof}

\section{The case of zero average index}
\label{s:avind_zero}

\subsection{Persistence of the local homology}
Let us assume that we are in the same setting of the previous section, but now the closed geodesic $\gamma\in\crit(E)$ satisfying the assumptions of Theorem~\ref{t:main} has zero average index
\begin{align*}
 \avind(\gamma)=0.
\end{align*}
As we explained in Remark~\ref{r:avind_zero}, the arguments of the previous section does not allow to prove Theorem~\ref{t:main} anymore. Instead, we will follow Bangert and Klingenberg's strategy as in \cite{Bangert:1983ax}.

Since  Bott's function $N:S^1\to\N\cup\{0\}$ associated to $\gamma$ has finite support, up to removing finitely many numbers from the infinite set of primes $\PP$ of Theorem~\ref{t:main} we have $N(z)=0$ for all $z\in\sqrt[m]1$. Bott's formula~\eqref{e:Bott_formula_nullity} thus implies
\begin{align*}
 \nul(\gamma^m)=\nul(\gamma),
 \qquad
 \forall m\in\PP,
\end{align*}
and Lemma~\ref{l:loc_isom_iteration} implies that the iteration map induces a local homology isomorphism
\begin{align*}
\psi^m_*: \Loc_*(E,S^1\cdot\gamma) \toup^{\cong} \Loc_*(E,S^1\cdot\gamma^m).
\end{align*}
By our assumption, the local homology $\Loc_d(E,\gamma)$ is non-trivial in degree 
\[
d=\dim(M)-1=\ind(\gamma)+\nul(\gamma)\geq1,
\] 
which is the maximal degree where it can be non-trivial. By Lemma~\ref{l:exact_sequence}, we have
\begin{align*}
 \Loc_{d+1}(E,S^1\cdot\gamma) 
 \cong
 \Loc_{d+1}(E,\gamma) \oplus  \Loc_{d}(E,\gamma) 
 \cong
 \Loc_{d}(E,\gamma)\neq 0.
\end{align*}

\subsection{Bangert and Klingenberg's homotopies}
We set $c:=E(\gamma)$. Let $[\sigma]$ be a non-zero element in the local homology group $\Loc_{d+1}(E,S^1\cdot\gamma)$, which is a linear combination
\begin{align*}
 \sigma=\lambda_1\sigma_1 +...+ \lambda_k\sigma_k.
\end{align*}
with coefficients $\lambda_i\in\Q$ and singular simplexes $\sigma_i:\Delta^{d+1}\to\{E<c\}\cup S^1\cdot\gamma$. We recall that the local homology of a critical circle that is an isolated local minimizer of a function is isomorphic to the homology of the circle. Since $d+1>1$, we infer that the critical circle $S^1\cdot\gamma$ is not a local minimizer of the energy function $E$. Therefore, we can apply a well known construction of Bangert and Klingenberg \cite[Th.~2]{Bangert:1983ax} that provides homological vanishing under iterations: for each sufficiently large $m\in\N$ there exist homotopies
$\sigma_{i,m,s}:\Delta^{d+1}\to\Lambda M$, $s\in[0,1]$, such that
\begin{itemize}
\item[(i)] $\sigma_{i,m,0}=\psi^m\circ\sigma_i$,
\item[(ii)] $E\circ\sigma_{i,m,1}<E(\gamma^m)=m^2c$,
\item[(iii)] If $\Delta\subset\Delta^{d+1}$ is a $j$-dimensional face (for some $j\in\{0,...,d+1\}$) of the standard simplex such that $E\circ\sigma_{i,m,0}|_{\Delta}<E(\gamma^m)$, then $\sigma_{i,m,s}|_{\Delta}=\sigma_{i,m,0}|_{\Delta}$ for all $s\in[0,1]$.
\end{itemize}
The union of the image of all homotopies $\sigma_{i,m,s}$ is contained in the free-loop space of a compact subset $W=W(m)\subset M$. Namely,
\begin{align*}
W:=
\big\{
\sigma_{i,m,s}(x)(t)
\ \big|\ i=1,...,k,\ s\in[0,1],\ x\in\Delta^{d+1},\ t\in\R/\Z
\big\},
\end{align*}
so that
\begin{align*}
\sigma_{i,m,s}(\Delta^{d+1})\subset \Lambda W,\qquad
\forall i=1,...,k,\ s\in[0,1].
\end{align*}
We fix a large enough constant $b=b(m)\in\R$ such that
\begin{align*}
 b > \max\big\{E\circ\sigma_{i,m,s}(x)\ \big|\ i=1,...,k,\ s\in[0,1],\ x\in\Delta^{d+1}  \big\}
 \geq E(\gamma^m)=m^2c.
\end{align*}
We consider the relative chains
\begin{align*}
\sigma_{m,s}:=\lambda_1\sigma_{1,m,s} +...+ \lambda_k\sigma_{k,m,s},
\end{align*}
which, by point (iii), represent the same homology class 
\begin{equation*}
\begin{split}
 [\sigma_{m,s}]=[\sigma_{m,0}]=\psi^m_*[\sigma]\in\Hom_{d+1}(\{E|_{\Lambda W}<b\},\{E|_{\Lambda W}<m^2c\}),
\\
\forall s\in[0,1]
\end{split}
\end{equation*}
Point~(ii) actually implies that $[\sigma_{m,1}]=0$, and thus
\begin{align}
\label{e:hom_vanishing}
 [\sigma_{m,s}]=0\in\Hom_{d+1}(\{E|_{\Lambda W}<b\},\{E|_{\Lambda W}<m^2c\}),
 \qquad\forall s\in[0,1].
\end{align}

\subsection{From local to global}
We now proceed as in Subsection~\ref{ss:local_to_global}: up to enlarging the compact subset $W\subset M$, we can assume that all the closed geodesics of $(M,g)$ are contained in the interior of $W$ (in the opposite case Theorem~\ref{t:main} holds without further work). We fix $m\in\PP$ large enough so that the homotopies of the previous subsection are defined. We consider the sequence  $\{F_\alpha:\Lambda M\to\R\ |\ \alpha\in\N\}$ of penalized functions constructed in Section~\ref{s:Benci_Giannoni}, so that for all $\alpha\in\N$ large enough $F_\alpha|_{\Lambda W}=E|_{\Lambda W}$ and we have the Morse inequality
\begin{equation}
\label{e:final_Morse_inequality_avind_zero}
\rank \Hom_{d+2}(\{F_\alpha<b\},\{F_\alpha\leq m^2c\})
\leq 
\sum_{S^1\cdot\zeta} \rank\Loc_{d+2}(E,S^1\cdot\zeta),
\end{equation}
where the sum in the right-hand side ranges over all the critical circles 
\[S^1\cdot\zeta\subset\crit(E)\cap E^{-1}(m^2c,b).\]
Notice that this is possible, since $d+2=\dim(M)+1>\dim(M)$, see Section~\ref{s:Benci_Giannoni}.
Equation~\eqref{e:hom_vanishing} readily implies that
\begin{align*}
[\sigma_{m,s}]=0\in\Hom_{d+1}(\{F_\alpha<b\},\{F_\alpha<m^2c\}),
\qquad\forall s\in[0,1], 
\end{align*}
Consider the inclusion induced homomorphism 
\begin{align*}
\Loc_{d+1}(E,S^1\cdot\gamma^m)=\Loc_{d+1}(F_\alpha,S^1\cdot\gamma^m) \ttoup^{\incl} \Hom_{d+1}(\{F_\alpha<b\},\{F_\alpha<m^2c\}).
\end{align*}
This homomorphism is not injective: indeed, by point~(i), it maps the non-zero element $\psi^m_*[\sigma]$ to $[\sigma_{m,0}]=0$. Since the inclusion induces an injective homomorphism
\begin{align*}
\Loc_*(E,S^1\cdot\gamma^m)=\Loc_*(F_\alpha,S^1\cdot\gamma^m) \eembup^{\incl} \Hom_*(\{F_\alpha\leq m^2c\},\{F_\alpha<m^2c\}),
\end{align*}
the long exact sequence of the triple
\begin{align*}
(\{F_\alpha<b\},\{F_\alpha\leq m^2c\},\{F_\alpha<m^2c\})
\end{align*}
implies that $\Hom_{d+2}(\{F_\alpha<b\},\{F_\alpha\leq m^2c\})$ is non-trivial. The Morse inequality~\eqref{e:final_Morse_inequality_avind_zero} implies that there exists a closed geodesic $\zeta_m\in\crit(E)$ with 
\begin{align}\label{e:avindzero_energy_bounds}
m^2c<E(\zeta_m)<b 
\end{align}
and with non-trivial local homology $\Loc_{d+2}(E,S^1\cdot\zeta_m)\neq0$. In particular, 
\begin{align}\label{e:avindzero_index_bounds}
\ind(\zeta_m)\leq d+2=\dim(M)+1\leq \ind(\zeta_m)+\nul(\zeta_m)+1.
\end{align}

\begin{proof}[Proof of Theorem~\ref{t:main}, case $\avind(\gamma)=0$]
If the closed geodesics of $(M,g)$ are not contained in a compact subset of $M$, we are done. In the other case, the above argument gives us a sequence $\{\zeta_m\ |\ m\in\N\}\subset\crit(E)$ satisfying the energy bounds~\eqref{e:avindzero_energy_bounds} and the index bounds~\eqref{e:avindzero_index_bounds}. The iteration inequality~\eqref{e:iteration_inequality_2} implies that \[\avind(\zeta_m)>0.\]
We claim that the sequence $\{\zeta_m\ |\ m\in\N\}$ is not comprised of  iterates of only finitely many closed geodesics, and that indeed only finitely many iterations of a given closed geodesic $\zeta\in\crit(E)$ may belong to the sequence $\{\zeta_m\ |\ m\in\N\}$. 

If this latter claim does not hold, there exists a function $\mu:\PP\to\N$ such that $\zeta_m=\zeta^{\mu(m)}$. Since 
\[ \mu(m)E(\zeta)= E(\zeta^{\mu(m)})= E(\zeta_m)> m^2 c, \]
we have that $\mu(m)\to\infty$ as $m\to\infty$. This, together with the iteration inequality \eqref{e:iteration_inequality_1} and
\begin{align*}
\avind(\zeta)=\frac{\avind(\zeta^{\mu(m)})}{\mu(m)}=\frac{\avind(\zeta_m)}{\mu(m)}>0,
\end{align*}
implies that 
\[\ind(\zeta_m)=\ind(\zeta^{\mu(m)})\geq \mu(m)\,\avind(\zeta)-(\dim(M)-1)\tttoup^{m\to\infty}\infty,\] 
contradicting the first inequality in~\eqref{e:avindzero_index_bounds}.
\end{proof}
\appendix

\section{Local homology}

\subsection{Homotopic invariance of the local homology}

The time-1 map of a deformation retraction induces an isomorphism in singular homology. For the local homology of a subset in a topological space, one only needs a weaker assumption on the homotopy.

\begin{prop}\label{p:homotopic_invariance_local_homology}
Let $X$ be a topological space, and $Z\subset Y\subset X$ two subspaces. Assume that, for some open neighborhood $W\subset X$ of $Z$, there exists a homotopy 
\[h_s:(W,W\setminus Z)\to (X,X\setminus Z),\qquad s\in[0,1],\] 
such that $h_0=\mathrm{id}$, $h_1|_{Y\cap W}=\mathrm{id}$, and $h_1(W)\subset Y$. Then, the inclusion induces the local homology isomorphism
\begin{align*}
\Hom_*(Y,Y\setminus Z) \ttoup^{\incl}_{\cong} \Hom_*(X,X\setminus Z).
\end{align*}
\end{prop}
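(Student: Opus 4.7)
The plan is to realize $j_*$ as the composition of an excision isomorphism with an inverse of $(h_1)_*$, where $j\colon(Y,Y\setminus Z)\hookrightarrow(X,X\setminus Z)$ is the inclusion in question, by sandwiching $(h_1)_*$ between two excision isomorphisms.

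First I would observe that the hypotheses force $h_1$ to be a morphism of pairs $(W,W\setminus Z)\to(Y,Y\setminus Z)$: indeed $h_1(W)\subset Y$ by assumption, and $h_1(W\setminus Z)\subset(X\setminus Z)\cap Y=Y\setminus Z$ since $Z\subset Y$. Denote by
\[
i\colon(W,W\setminus Z)\hookrightarrow(X,X\setminus Z),\quad k\colon(Y\cap W,(Y\cap W)\setminus Z)\hookrightarrow(Y,Y\setminus Z),
\]
and $k'\colon(Y\cap W,(Y\cap W)\setminus Z)\hookrightarrow(W,W\setminus Z)$ the obvious inclusions of pairs. I would then record two relations at the level of homology:
\[
i_*=j_*\circ(h_1)_*,\qquad k_*=(h_1)_*\circ k'_*.
\]
The first follows from interpreting $h_s$ as a homotopy of maps of pairs $(W,W\setminus Z)\to(X,X\setminus Z)$ from $h_0=i$ to $j\circ h_1$; the second follows from $h_1|_{Y\cap W}=\mathrm{id}$, which gives $h_1\circ k'=k$ as maps of pairs.

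Next I would invoke the excision theorem twice, in the pairs $(X,X\setminus Z)$ and $(Y,Y\setminus Z)$, excising the closed set $X\setminus W$ (respectively $Y\setminus W$) to conclude that $i_*$ and $k_*$ are isomorphisms. A short diagram chase then finishes the argument: from the first relation and $i_*$ iso, the map $(h_1)_*$ is injective and $j_*$ is surjective; from the second relation and $k_*$ iso, the map $(h_1)_*$ is surjective. Hence $(h_1)_*$ is an isomorphism, and then $j_*=i_*\circ(h_1)_*^{-1}$ is an isomorphism as well, which is the conclusion of the proposition.

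The only genuine delicate point, and hence the main obstacle, is the verification of the hypotheses of excision, which require the interior condition $\overline Z\subset W$ (equivalently $\mathrm{int}(W)\cup\mathrm{int}(X\setminus Z)=X$). This is automatic in all applications of the proposition in the body of the paper, since $Z$ is always a closed subset of the ambient space (a point, a critical circle, or a compact arc in such a circle) and $W$ is an open neighborhood of it; in the full generality of the statement one may always first shrink $W$ to a smaller open neighborhood of $\overline Z$ and restrict $h_s$ accordingly without affecting the other hypotheses.
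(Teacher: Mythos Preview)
Your proof is correct and is essentially the same as the paper's: the two relations $i_*=j_*\circ(h_1)_*$ and $k_*=(h_1)_*\circ k'_*$ are precisely the commutativity of the two triangles in the paper's diagram, and both arguments conclude via excision on the vertical inclusions. Your explicit remark on the excision hypothesis $\overline Z\subset W$ is a valid technical observation that the paper leaves implicit.
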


\begin{proof}
The proof is a direct consequence of the following commutative diagram
\begin{align*}
\xymatrix{
\Hom_*(Y,Y\setminus Z)
\ar[rr]^{\incl} 
& &
\Hom_*(X,X\setminus Z)\\\\
\Hom_*(Y\cap W,Y\cap W\setminus Z)
\ar[rr]^{\incl} 
\ar[uu]^{\incl}_{\cong}
& &
\Hom_*(W,W\setminus Z)
\ar[uu]_{\incl}^{\cong}
\ar[lluu]_{(h_1)_*} 
} 
\end{align*}
where the commutativity of the lower-left triangle follows from the fact that $h_1$ fixes $Y\cap W$, the commutativity of the upper-right triangle follows from the fact that $h_1$ is the time-1 map of the homotopy $h_s:(W,W\setminus Z)\to(X,X\setminus Z)$ with $h_0=\mathrm{id}$, and the vertical homomorphisms are isomorphisms according to the excision property.
\end{proof}

\subsection{Local homology homomorphisms}
The following statement is certainly well known to the experts.

\begin{prop}\label{p:local_homology_maps}
Let $U\subseteq U'\subseteq \R^n\times\R^m$ be open neighborhoods of the origin $\bm0=(0,0)$, $F:U'\to\R$ a smooth function, and $\phi:U\to U'$ a diffeomorphism onto its image, verifying the following assumptions:
\begin{itemize}
\item[(i)] the origin $\bm0$ is an isolated critical point of $F$;

\item[(ii)] $F(x,v)=F(x,0)-\tfrac12 |v|^2$ for all $(x,v)\in U'$;

\item[(iii)] $F\circ\phi\leq F|_U$ in a neighborhood of $\bm0$;

\item[(iv)] $\phi(x,0)=(x,0)$ for all $(x,0)\in U$.
\end{itemize}
Then $\phi$ induces the local homology isomorphism
\[\phi_*=(\mathrm{sign}(\det\diff\phi(\bm0)))\mathrm{id}:\Loc_*(F,\bm0)\to\Loc_*(F,\bm0).\]
\end{prop}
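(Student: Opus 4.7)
The strategy is to split off the nondegenerate $v$-direction via the product form in (ii) and to recognize $\phi_*$ as a local degree on the resulting fiber. Set $G(x):=F(x,0)$ and $c:=F(\bm0)=G(0)$. From (ii) we have $\partial_vF(x,0)\equiv0$, hence $dF(x,v)=(dG(x),-v)$, so (i) forces $0$ to be an isolated critical point of $G$ in $\R^n$. Since $-\tfrac12|v|^2$ is a negative-definite quadratic form on $\R^m$ with local homology concentrated in degree $m$, the Künneth theorem for local homology (with rational coefficients) applied to the product splitting $F(x,v)=G(x)-\tfrac12|v|^2$ gives a cross-product isomorphism
$$\Loc_{d-m}(G,0)\otimes \Hom_m(D^m,S^{m-1})\toup^{\cong} \Loc_d(F,\bm0),$$
realized at the chain level by $\tau\otimes\nu\mapsto \big((x,v)\mapsto(\tau(x),\nu(v))\big)$, where $\tau$ is a local cycle for $G$ at $0$ and $\nu$ generates the fundamental class of a small disk $D^m\subset\R^m$.

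\textbf{Decomposing $\phi_*$.} Condition (iv) lets us write $\phi(x,v)=(x+\alpha(x,v),\beta(x,v))$ with $\alpha(x,0)=0=\beta(x,0)$, whence
$$d\phi(\bm0)=\begin{pmatrix} I & \partial_v\alpha(\bm0)\\ 0 & B\end{pmatrix},\qquad B:=\partial_v\beta(\bm0),$$
so $\det d\phi(\bm0)=\det B$, and it suffices to show $\phi_*=\mathrm{sign}(\det B)\,\mathrm{id}$. Since $\phi$ fixes $\R^n\times\{0\}$ pointwise, its action on the base factor $\Loc_{d-m}(G,0)$ is the identity. Under the cross-product isomorphism above, $\phi_*$ therefore becomes the identity on the base factor tensored with an action on the fiber factor $\Hom_m(D^m,S^{m-1})\cong\Q$, and this fiber action depends only on the transverse linearization $B$. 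Standard local-degree theory identifies such an action on the top relative homology of a disk as multiplication by the sign of $\det B$, delivering $\phi_*=\mathrm{sign}(\det B)\,\mathrm{id}=\mathrm{sign}(\det d\phi(\bm0))\,\mathrm{id}$.

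\textbf{Main obstacle and implementation.} The delicate step is justifying that only the linearization $B$ controls the fiber action, i.e.\ that $\phi$ and its block-triangular linearization $\phi_{\mathrm{lin}}(x,v):=(x,Bv)$ induce the same map on local homology. My plan is to build a homotopy $\phi_s$ from $\phi_1=\phi$ to $\phi_0=\phi_{\mathrm{lin}}$ through maps of pairs
$$\phi_s:(\{F<c\}\cup\{\bm0\},\{F<c\})\to(\{F<c\}\cup\{\bm0\},\{F<c\})$$
and then invoke Proposition~\ref{p:homotopic_invariance_local_homology}. A natural candidate is the parabolic rescaling $\phi_s(x,v):=(x+\alpha(x,sv),\tfrac1s\beta(x,sv))$ for $s\in(0,1]$, extended continuously to $\phi_0(x,v)=(x,B(x)v)$ at $s=0$ (well-defined since $\beta(x,0)=0$), followed by a further affine interpolation from $B(x)$ to $B$ through nonsingular matrices. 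The technical heart is checking $F\circ\phi_s\leq F$ near $\bm0$; a direct computation from (iii) reduces this to the vertical lower bound $|\beta(x,w)|\geq|w|$ for small $(x,w)$, which is essentially a fiber version of (iii) itself and holds automatically when $\alpha\equiv 0$. The general case should be reducible to this by a preliminary homotopy absorbing $\alpha$ into the diagonal; if that meets a snag, a robust fallback is to argue directly with the explicit generator $B^d=\tau(D^{d-m})\times D^m$ furnished by Lemma~\ref{l:generator_local_homology} and track $\mathrm{sign}(\det B)$ as the orientation of the fiber deformation of $\phi(B^d)$ back to $B^d$ inside $\{F\leq c\}\cup\{\bm0\}$.
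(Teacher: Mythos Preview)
Your overall architecture---split off the $v$-factor via K\"unneth, homotope $\phi$ to a product map $(x,v)\mapsto(x,\text{linear}(v))$, and read off the sign as a local degree---is exactly the paper's strategy. The difference, and the genuine gap in your proposal, lies in the construction of the homotopy.

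Your parabolic rescaling $\phi_s(x,v)=(x+\alpha(x,sv),\tfrac1s\beta(x,sv))$ satisfies $F\circ\phi_s\le F$ if and only if $|\beta(x,w)|\ge|w|$ for small $(x,w)$, as you correctly compute. But this inequality does \emph{not} follow from (iii) and (iv): hypothesis (iii) only gives
\[
|\beta(x,v)|^2 \ge |v|^2 + 2\bigl(G(x+\alpha(x,v))-G(x)\bigr),
\]
and the correction term $2(G(x+\alpha(x,v))-G(x))$ can be negative when $\alpha\not\equiv0$. Your suggestion to ``absorb $\alpha$ into the diagonal by a preliminary homotopy'' is the right instinct, but you give no construction, and this is precisely where the work lies. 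The paper handles it by a five-stage homotopy: first it \emph{inflates} $\beta$ by a large factor $e^{\lambda}$ (stage $s\in[0,1]$), chosen so that the inflated quadratic term dominates the error produced when $\alpha$ is subsequently deformed to the identity (stage $s\in[1,2]$); only then does it normalize the fibre map down to $v\mapsto(\pm v_1,v_2,\dots,v_m)$ (stages $s\in[2,5]$). Each stage requires a separate verification of $F\circ\phi_s\le F$, and the inflation constant $\lambda$ must be chosen in terms of Lipschitz bounds on $\alpha,\beta,dG$.

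Two further remarks. First, your sentence ``since $\phi$ fixes $\R^n\times\{0\}$ pointwise, its action on the base factor is the identity'' is premature: $\phi$ is not a product map, so $\phi_*$ need not respect the K\"unneth decomposition until \emph{after} you have homotoped it to a product form. Second, your fallback via Lemma~\ref{l:generator_local_homology} does not apply here: that lemma produces an explicit ball generator only when the local homology is concentrated in the single maximal degree $\ind+\nul$, whereas for a general isolated critical point of $G$ the local homology $\Loc_*(G,0)$ can live in several degrees and admits no such explicit cycle.
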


\begin{proof}
Let us write $f(x):=F(x,0)$ and $\phi(x,v)=(\alpha(x,v),\beta(x,v))$ for suitable smooth maps $\alpha$ and $\beta$. Notice that
\begin{align*}
\diff\phi(x,0)
=
\left(
  \begin{array}{cc}
    \partial_x\alpha(x,0) & \partial_v\alpha(x,0) \\ 
    \partial_x\beta(x,0) & \partial_v\beta(x,0) 
  \end{array}
\right)
=
\left(
  \begin{array}{cc}
    \mathrm{id} & \partial_v\alpha(x,0) \\ 
    0 & \partial_v\beta(x,0) \\ 
  \end{array}
\right),
\end{align*}
and therefore the map $\beta_x(v):=\beta(x,v)$ is a local diffeomorphism at the origin, for all $x\in\R^m$ such that $(x,0)\in U$. If $V\subset\R^n$ is a sufficiently small neighborhood of the origin, the map $\beta_0:V\setminus\{0\}\to\R^n\setminus\{0\}$ is homotopic to $(\mathrm{sign}(\det\diff\phi(\bm0)))\mathrm{id}$ through maps of the form $V\setminus\{0\}\to\R^n\setminus\{0\}$.

Up to shrinking the neighborhood $U$, there exist constants $b_2>b_1>0$, $c_1>0$, and $c_2>0$ such that $b_1\|v\|\leq\|\beta(x,v)\|\leq b_2\|v\|$, $\|\diff f(x)\|\leq c_1$, and $\|\diff\alpha(x,v)\|\leq c_2$ for all $(x,v)\in U$. We fix a constant $\lambda>0$ such that 
\begin{align*}
e^{2\lambda}
>
\frac{b_2^2+2c_1c_2}{b_1^2}.
\end{align*}
We shrink $U$ around the origin, so that the homotopy $\phi_s:U\to\R^n\times\R^m$, given by 
\begin{align*}
\phi_s(x,v)
=
\left(\alpha(x,v),\frac{e^{s\lambda}}{1-s+s\|v\|^{1/2}}\beta(x,v)\right),
\qquad
s\in[0,1],
\end{align*}
is well defined and continuous. We also require $U$ to be small enough so that
\begin{align*}
 \|v\|<1,\qquad\forall (x,v)\in U.
\end{align*}
Notice that $\phi_0=\phi$, and for all $s\in[0,1]$ we have
\begin{align}
\label{e:condition_on_homotopy_1}
 &\phi_s(x,0)=(x,0),\qquad\forall(x,0)\in U,\\
\label{e:condition_on_homotopy_2}
 &F\circ\phi_s\leq F.
\end{align}
We extend the homotopy $\phi_s$ for $s\in[1,2]$ by setting
\begin{align*}
\phi_s(x,v)
=
\left(\alpha(x,(2-s)v),e^{\lambda}\|v\|^{-1/2}\beta((2-s)x,v)\right),
\qquad
s\in[1,2].
\end{align*}
Condition~\eqref{e:condition_on_homotopy_1} still holds for all $s\in[1,2]$. Condition~\eqref{e:condition_on_homotopy_2} also holds for all $s\in[1,2]$, for
\begin{align*}
F\circ\phi(x,v)
=
f(\alpha(x,v)) - \frac{\|\beta(x,v)\|^2}{2}
\leq
F(x,v),
\end{align*}
and therefore
\begin{align*}
F\circ\phi_s(x,v)
&=
f(\alpha(x,(2-s)v))
-
\frac{e^{2\lambda}\|\beta((2-s)x,v)\|^2}{\|v\|}\\
&\leq
f(\alpha(x,v)) + c_1c_2 \|(s-1)v\| - \frac{e^{2\lambda}b_1^2}{2}\|v\|\\
&\leq
F(x,v)+\frac{\|\beta(x,v)\|^2}{2} + c_1c_2 \|(s-1)v\| - \frac{e^{2\lambda}b_1^2}{2}\|v\|\\
&\leq
F(x,v)+\frac{b_2^2}{2}\|v\|^2 + c_1c_2 \|(s-1)v\| - \frac{e^{2\lambda}b_1^2}{2}\|v\|\\
&\leq
F(x,v)+\underbrace{\left(\frac{b_2^2}{2} + c_1c_2 - \frac{e^{2\lambda}b_1^2}{2}\right)}_{<0}\|v\|\\
& \leq F(x,v).
\end{align*}
The final map of this homotopy has the form $\phi_2(x,v)=(x,\kappa(v))$, where 
\begin{align*}
\kappa(v)=e^\lambda \|v\|^{-1/2} \beta(0,v).
\end{align*}
In particular
\begin{align*}
k_1\|v\|^{1/2} \leq \|\kappa(v)\|\leq k_2\|v\|^{1/2}
\end{align*}
for some constants $k_2>k_1>0$. We shrink $U$ to be of the form $U=B'\times B''$, where $B'\subset\R^n$ is a neighborhood of the origin, and $B''\subset\R^m$ is the open ball of radius $2k_0$, for some $k_0\in(0,k_1^2)$. We continuously extend the homotopy $\phi_s$ for $s\in[2,3]$ by setting
\begin{align*}
\phi_s(x,v)
=
\left( x , (3-s+(s-2) k_0^{-1}\|v\|)\, \kappa\left( \frac{v}{3-s+(s-2) k_0^{-1}\|v\|}\right) \right)\!,
\quad
s\in[2,3].
\end{align*}
For all $(x,v)\in U$ with $\|v\|\leq k_0$ and $s\in[2,3]$, we have
\begin{align*}
F\circ\phi_s(x,v)
&=
f(x)
-\frac{1}{2} \Big|3-s+(s-2) k_0^{-1}\|v\|\Big|^2 \left\|
\kappa\left( 
\frac{v}{3-s+(s-2) k_0^{-1}\|v\|}\right) 
\right\|^2\\
&\leq
f(x)
-\frac{1}{2} k_1^2 \|v\| \Big|3-s+(s-2) k_0^{-1}\|v\|\Big|\\
&\leq
f(x)
-\frac{k_1^2}{2k_0}  \|v\|^2 \\
&\leq
f(x)
-\frac{1}{2}  \|v\|^2 \\
&=
F(x,v).
\end{align*}
Thus, up to further shrinking the neighborhood $U$, conditions~\eqref{e:condition_on_homotopy_1} and ~\eqref{e:condition_on_homotopy_2} hold for all $s\in[2,3]$. The final map of the homotopy has the form $\phi_3(x,v)=(x,\omega(v))$, where 
\begin{align*}
 \omega(v) = k_0^{-1} \|v\|\, \kappa( k_0 \|v\|^{-1}v ).
\end{align*}
Notice that $\omega$ is positively homogeneous of degree 1, i.e. 
\[\omega(\lambda v)=\lambda\omega(v),
\qquad\forall \lambda >0,\]
and satisfies $\|\omega(v)\|\geq\|v\|$. We continuously extend the homotopy $\phi_s$ for $s\in[3,4]$ by setting
\begin{align*}
\phi_s(x,v)
=
\left( x , \frac{\omega(v)}{4-s+(s-3)\|\omega(v)\|\,\|v\|^{-1}}\right)\!,
\quad
s\in[3,4].
\end{align*}
For all $s\in[3,4]$, we have
\begin{align*}
F\circ\phi_s(x,v)
&=
f(x)
-\frac{1}{2} \frac{\|\omega(v)\|^2}{(4-s+(s-3)\|\omega(v)\|\,\|v\|^{-1})^2}\\
&\leq
f(x)
-\frac{1}{2} \frac{\|\omega(v)\|^2}{(\|\omega(v)\|\,\|v\|^{-1})^2}\\
&= F(x,v).
\end{align*}
and thus conditions~\eqref{e:condition_on_homotopy_1} and ~\eqref{e:condition_on_homotopy_2} hold for all $s\in[3,4]$. The final map has the form $\phi_4(x,v)=(x,\|v\|\,\psi(v/\|v\|))$, where 
\[
\psi:S^{n-1}\to S^{n-1}, 
\qquad
\psi(v)=\frac{\omega(v)}{\|\omega(v)\|}.
\]
Consider again the map $\beta_0:V\to\R^n$ of the beginning of the proof. Notice that the map $v\mapsto\|v\|\,\psi(v/\|v\|)$ is homotopic to the map $\beta_0$ of the beginning of the proof via maps of the form $V\setminus\{0\}\to\R^n\setminus\{0\}$. Thus, there exists a homotopy $\psi_s:S^{n-1}\to S^{n-1}$, for $s\in[4,5]$, such that $\psi_4=\psi$ and 
\[\psi_5(v_1,...,v_n)=\big(\mathrm{sign}(\det\diff\beta_0(0))v_1,v_2,...,v_n\big)=\big(\mathrm{sign}(\det\diff\phi(\bm0))v_1,v_2,...,v_n\big).\] 
We employ it to continuously extend one last time $\phi_s$ for $s\in[4,5]$ as
\begin{align*}
\phi_s(x,v)=(x,\|v\|\psi_s(v/\|v\|)),
\qquad s\in[4,5].
\end{align*}
Once again, this homotopy satisfies~\eqref{e:condition_on_homotopy_1} and~\eqref{e:condition_on_homotopy_2} for all $s\in[4,5]$ (actually we even have $F\circ\phi_s=F$ for all $s\in[4,5]$), and the final map is precisely
\begin{align}
\label{e:form_phi_5}
\phi_5(x,v)=(x,\nu(v)),
\end{align}
where
\begin{align*}
\nu(v_1,v_2,...,v_n)
=
\big(\mathrm{sign}(\det\diff\phi(\bm0))v_1,v_2,...,v_n\big).
\end{align*}
Since conditions~\eqref{e:condition_on_homotopy_1} and~\eqref{e:condition_on_homotopy_2} hold for all $s\in[0,5]$, the homotopy $\phi_s$ restricts as a homotopy of pairs of the form
\begin{align*}
 \phi_s:\big(\{F<F(\bm0)\}\cup\{\bm0\},\{F<F(\bm0)\}\big)\to\big(\{F<F(\bm0)\}\cup\{\bm0\},\{F<F(\bm0)\}\big),
 \\
 \forall s\in[0,5],
\end{align*}
and we have
\[
(\phi_5)_*
=(\phi_0)_*
=\phi_*
:\Loc_*(F,\bm0)\to\Loc_*(F,\bm0).
\]

Now, let $\theta_s$ be the flow of the anti-gradient $-\nabla f$ with respect to some Riemannian metric. By a well known construction in Morse theory due to Gromoll and Meyer  \cite{Gromoll:1969jy}, there exists an arbitrarily small neighborhood $W\subset \R^n$ of the origin with the following property: for any $x\in W$ and $s>0$ such that $\theta_s(x)\not\in W$, there exists $s'\in[0,s)$ such that $\theta_{s'}(x)\in W_-:= W\cap\{f<f(0)\}$. We shrink $U$ to be of the form $U=W\times B$, where $W\subset\R^n$ is one such neighborhood of the origin, and we set $W_*:=W_-\cup\{0\}$. The inclusion
\begin{align*}
\big( W_*\times B,(W_-\times B)\cup (W_*\times B\setminus\{0\})  \big)
\hookrightarrow 
(\{F|_{U}<F(\bm0)\}\cup\{\bm0\},\{F|_{U}<F(\bm0)\})
\end{align*}
induces an isomorphism in homology (indeed, it is even a homotopy equivalence, whose inverse can be easily built by means of the anti-gradient flow $\theta_s$). Since the map $\phi_5$ has the form~\eqref{e:form_phi_5}, by composing the K\"unneth isomorphism
\begin{align*}
K:
\Hom_*(W_*,W_-)\otimes \Hom_*(B,B\setminus\{0\})
\ttoup^{\cong}
\Hom_*\big( W_*\times B,(W_-\times B)\cup (W_*\times B\setminus\{0\})  \big)
\end{align*}
with the latter inclusion-induced isomorphism, we obtain a commutative diagram
\begin{align*}
\xymatrix{
\Hom_*(W_*,W_-)\otimes \Hom_*(B,B\setminus\{0\})
\ar[rr]^{\qquad\qquad K}_{\qquad\qquad\cong}\ar[dd]^{\cong}_{\mathrm{Id_*}\otimes\nu_*}
& &
\Loc_*(F,\bm0)
\ar[dd]^{(\phi_5)_*=\phi_*}_{\cong} 
\\\\
\Hom_*(W_*,W_-)\otimes \Hom_*(B,B\setminus\{0\})
\ar[rr]^{\qquad\qquad K}_{\qquad\qquad\cong}_{\qquad\qquad\cong}
& &
\Loc_*(F,\bm0)
} 
\end{align*}
This completes the proof, since $\nu_*=\mathrm{sign}(\det\diff\phi(\bm0))\mathrm{id}$.
\end{proof}

\bibliography{_biblio}
\bibliographystyle{amsalpha}

\end{document}